\newtheorem{thm}{Theorem}[section]
\newtheorem{prop}[thm]{Proposition} 
\newtheorem{cor}[thm]{Corollary} 
\newtheorem{lem}[thm]{Lemma}
\newtheorem{rem}[thm]{Remark}
\newtheorem{defn}[thm]{Definition}
\newtheorem{example}[thm]{Example}
\numberwithin{equation}{section}
\renewcommand{\MR}[1]{}
\newcommand{\cald}{\mathcal{D}}
\newcommand{\calt}{\mathcal{T}}
\newcommand{\calk}{\mathcal{K}}
\newcommand{\call}{\mathcal{L}}
\begin{document}
	
\title[Weighted Cuntz algebras]%
{Weighted Cuntz algebras}
%
\author[L. Helmer and B. Solel]{Leonid~Helmer \, and \, Baruch~Solel}

\address{Department of Mathematics, Ben Gurion University, Beer Sheva, Israel} \email{leonihe@gmail.com}
\address{Department of Mathematics, Technion - Israel Institute of Technology, Haifa 32000, Israel}
\email{mabaruch@technion.ac.il}

\date{}

%



\begin{abstract}
	We study the $C^*$-algebra $\mathcal{T}/\mathcal{K}$ where $\mathcal{T}$ is the $C^*$-algebra generated by $d$ weighted shifts on the Fock space of $\mathbb{C}^d$, $\mathcal{F}(\mathbb{C}^d)$, ( where the weights are given by a sequence $\{Z_k\}$ of matrices $Z_k\in M_{d^k}(\mathbb{C})$) and $\mathcal{K}$ is the algebra of compact operators on the Fock space. If $Z_k=I$ for every $k$, $\mathcal{T}/\mathcal{K}$ is the Cuntz algebra $\mathcal{O}_d$.
	
	We show that $\mathcal{T}/\mathcal{K}$ is isomorphic to a Cuntz-Pimsner algebra and use it to find conditions for the algebra to be simple. 
	
	We present examples of simple and of non simple algebras of this type. 
	
	We also describe the $C^*$-representations of $\mathcal{T}/\mathcal{K}$.
\end{abstract}

\maketitle

\thanks{{\it key words and phrases.} Weighted shift, Simplicity, Cuntz-Pimsner algebra,\\ $C^*$-Correspondence, Fock space, $C^*$-algebra.}

\thanks{2010 {\it Mathematics Subject Classification.} 46L05, 47L80, 46L08, 46L35, 46L89.  }

\section{Introduction }

In \cite{OD75} O'Donovan studied the $C^*$-algebra generated by a single weighted shift modulo the compact operators. To do this he first proved that such a $C^*$-algebra is isomorphic to a certain crossed product of a commutative $C^*$-algebra by an action of $\mathbb{Z}$. Then, he was able to use known results for such crossed products to study the algebra. The same approach was used by P. Ghatage in \cite{G} (and also, with W. Phillips, in \cite{GP}).

In \cite{MuS16}, the second author, with P. Muhly, introduced and studied algebras of weighted shifts on the Fock space associated with a correspondence. This is a far reaching generalization of the classical weighted shift (on $\ell_2$). The emphasis there was on the nonself adjoint operator algebras associated with such shifts (these algebras are called weighted Hardy algebras). 

Here we explore the case where the correspondence is a finite dimensional Hilbert space  $\mathbb{C}^d$. Recall that the Fock space associated with $\mathbb{C}^d$ is the Hilbert space
$$\mathcal{F}(\mathbb{C}^d):=\mathbb{C}\oplus \mathbb{C}^d \oplus (\mathbb{C}^d)^{\otimes 2} \oplus (\mathbb{C}^d)^{\otimes 3}\oplus \cdots .$$ So that, when $d=1$, we get $\ell_2$. Now, the sequence of weights is a sequence $\{Z_k\}_{k=0}^{\infty}$ of invertible  matrices where $Z_k$ is a $d^k\times d^k$ complex matrix, $Z_0=1$ and there are $0<\epsilon$ and $M\geq \epsilon$ such that $\epsilon I \leq |Z_k| \leq MI$ (for every $k$). As shown in Lemma~\ref{ueq} , we can (and often will) assume that $Z_k$ is positive for all $k$.

For every $1\leq i \leq d$, write $W_i$ for the operator, in $B(\mathcal{F}(\mathbb{C}^d))$, defined by
$$W_i (\xi_1 \otimes \xi_2 \otimes \cdots \otimes \xi_n)=Z_{n+1}(e_i \otimes \xi_1 \otimes \xi_2 \otimes \cdots \otimes \xi_n)$$ where each $\xi_j$ lies in $\mathbb{C}^d$ and $\{e_1,\ldots,e_d\}$ is the standard orthonormal basis of $\mathbb{C}^d$. We refer to $\{W_i\}$ as the weighted shifts and note that we can write $W_i=ZS_i$ where $S_i$ is the unweighted shift defined by $$S_i (\xi_1 \otimes \xi_2 \otimes \cdots \otimes \xi_n)=(e_i \otimes \xi_1 \otimes \xi_2 \otimes \cdots \otimes \xi_n)$$ and
$$Z:=Z_0\oplus Z_1 \oplus Z_2 \oplus \cdots .$$
Note also that, by our assumptions on the sequence $\{Z_k\}$, $Z$ and each $W_i$ is bounded and $Z$ is bounded and invertible. Such weighted shifts were studied by G. Popescu in \cite{Po} but in his study the matrices $Z_k$ were assumed to be diagonal.

Write $\mathcal{T}(\mathbb{C}^d,Z)$ (or, simply, $\mathcal{T}$) for the $C^*$-algebra generated by $\{W_i\}_{i=1}^{d}$. It turns out that this algebra contains the algebra of the compact operators on $\mathcal{F}(\mathbb{C}^d)$ (denoted $\mathcal{K}$) and we propose to study the $C^*$-algebra $\mathcal{T}/ \mathcal{K}$. If $Z_k=I$ for every $k$, $\mathcal{T}/\mathcal{K}$ is isomporphic to the Cuntz algebra $\mathcal{O}_d$. Thus, we  refer to the algebra $\mathcal{T}/\mathcal{K}$  as a \emph{weighted Cuntz algebra}.

If $d=1$ this is the algebra studied in \cite{OD75}, \cite{G} and \cite{GP}. As mentioned above, it was done by presenting it as a crossed product $C^*$-algebra (see \cite[Theorem 3.1.1]{OD75}) and then applying the theory of crossed product $C^*$-algebras. 

In our case, we will prove, in Theorem~\ref{isomorphism}, that we can present the algebra $\mathcal{T}/\mathcal{K}$ as a Cuntz-Pimsner algebra associated with a $C^*$-correspondence $q(F)$ over a $C^*$-algebra $q(\mathcal{D})$. Note that this generalizes the case $d=1$  since a $C^*$-crossed product is a Cuntz-Pimsner algebra. (It also generalizes Theorem 8.1 of \cite{MuS16} where $\mathbb{C}^d$ is replaced by the correspondence associated with an automorphic action on a von Neumann algebra). Then, at least in principle, one can apply the theory of Cuntz-Pimsner algebras to study the algebra $\mathcal{T}/\mathcal{K}$. There also seems to be a potential connection between our study here and the approach via Exel's type crossed products by endomorphisms (see \cite{Kw17}).

In this paper, we use this approach to study the question of simplicity of the algebra $\mathcal{T}/\mathcal{K}$ and its $C^*$-representations. 

The problem with this approach is that the correspondence that we get (or the dynamics in the case $d=1$) can be quite complicated and one may have to impose additional conditions on the sequence of weights. For $d=1$ the simplicity question was answered in \cite{G} for the case where the sequence of weights is almost periodic (see \cite[Proposition 2.2]{G}). Here, for the study of simplicity, we make the assumption that the sequence $\{Z_k\}$ is essentially periodic. We refer to this assumption as Condition A(p) (see the discussion preceding Lemma~\ref{uz}).

Assuming Condition A(p), we are able, in Theorem~\ref{simplicity}, to find conditions for the algebra $\mathcal{T}/\mathcal{K}$ to be simple. For this we use the characterization of simplicity of Cuntz-Pimsner algebras proved by J. Schweizer in \cite{Sch01} (see Theorem~\ref{simpleCP} below) and an analysis of the algebra $q(\mathcal{D})$ and its ideals. In fact, we show, in Corollary~\ref{UHFC00}, that $q(\mathcal{D})$ is isomorphic to  $M_{d^{\infty}}\otimes \mathcal{C}_{00}$ where $M_{d^{\infty}}$ is the UHF algebra obtained as a direct limit of the algebras $M_{d^n}(\mathbb{C})$ and $\mathcal{C}_{00}$ is given explicitely.

In Section~\ref{examples} we present some examples of simple and non simple algebras of the form $\mathcal{T}/\mathcal{K}$ for different sequences of weights (see also Corollary~\ref{EX}).

Finally, in Section 6, we describe the $C^*$-representations of the algebra $\mathcal{T}/\mathcal{K}$ and show that each such representation is given by an invertible operator $R$ and a Cuntz family $\{U_i\}$ satisfying certain conditions.

What we see in both Theorem~\ref{simplicity} and Theorem~\ref{representations} is that (assuming our Condition A(p)) understanding the structure of $\mathcal{T}/\mathcal{K}$ reduces to understanding the structure of a certain ``corner" of $q(\mathcal{D})$ (denoted $\mathcal{C}_{00}$). In some cases we know that $\mathcal{C}_{00}$ is finite dimentional and then we can get more definite results (as demonstrated in the examples that we present).

\textbf{Notation}

Let $\mathbb{F}_d^+$ be the free algebra on $d$ generators. An element $\alpha\in \mathbb{F}_d^+$ is a word $\alpha=\alpha_1 \alpha_2 \cdots \alpha_n$ where $\alpha_i\in \{1,\ldots,d\}$. The length of this word will be written $|\alpha|$. 

We shall often write $M_k$ for $M_k(\mathbb{C})$. For a matrix $A\in M_{d^n}$, the rows and columns will be indexed by words in $\mathbb{F}_d^+$. So we can write $A=(a_{\alpha,\beta})$ (with $|\alpha|=|\beta|=n$). We shall write $e_{\alpha,\beta}$ for the matrix unit with $1$ in the $\alpha,\beta$ position and $0$s elsewhere.

Also, given $u_1,u_2,\ldots,u_d$ and $\alpha\in \mathbb{F}_d^+$ with $|\alpha|=n\geq 1$, we write $u_{\alpha}=u_{\alpha_1} u_{\alpha_2} \ldots u_{\alpha_n}$. For the trivial word $\emptyset$, $u_{\emptyset}=I$.

\section{The weighted Cuntz algebras as Cuntz-Pimsner algebras}

Let $E=\mathbb{C}^d$ with $0<d<\infty$ be a finite dimensional Hilbert space. Then the (full) Fock space 
$\mathcal{F}(\mathbb{C}^d)=\bigoplus_{k=0}^{\infty}(\mathbb{C}^d)^{\otimes k}$ is a Hilbert space. 
For every $\xi\in \mathbb{C}^d$ we have $\xi=a_1{e_1}+...+a_d{e_d}$ where $\{{e_i}: i=1,...,d\}$ is the standard basis in $\mathbb{C}^d$. Hence, if $T_\xi$ is the creation operator associated with $\xi$, then $T_\xi=a_1T_{{e_1}}+...+a_dT_{{e_d}}$. For the sake of simplicity we shall write $S_i$ for  $T_{{e_i}}$, $i=1,...,d$. The Toeplitz algebra $\mathcal{T}(\mathbb{C}^d)$ of $\mathbb{C}^d$ is defined to be the algebra
$C^*(S_1,...,S_d)$.

The following definition appeared in \cite{MuS16} in the context of $W^*$-correspondences.

\begin{defn}  A sequence $Z=\{Z_k\}_{k\geq 0}$ of  operators $Z_k\in B((\mathbb{C}^d)^{\otimes k})$  will be called a weight sequence  in case:

1. $Z_0=I_A$,

2. $\sup\|Z_k\|<\infty$,


3. There is an $\epsilon >0$ such that $|Z_k|\geq \epsilon I$ for all $k\geq 1$ .
	
\end{defn}

Given a weight sequence it defines a weight operator $Z=diag(Z_1,Z_2,...) :\mathcal{F}(\mathbb{C}^d)\rightarrow \mathcal{F}(\mathbb{C}^d)$,
 where $Z_k:
(\mathbb{C}^d)^{\otimes k}\rightarrow (\mathbb{C}
^d)^{\otimes k}$. We write, for $1\leq i \leq d$, $W_i=ZS_i$ and refer to it as a weighted shift. The weighted Toeplitz algebra, $\mathcal{T}(\mathbb{C}^d,Z)$, is the $C^*$-algebra generated by $\{W_i: 1\leq i \leq d\}$. It follows from our assumptions on the weight sequence that the hypotheses $\textbf{A}$ and $\textbf{B}$ from \cite[Section 6]{MuS16} are satisfied and (using  \cite[Proposition 6.5]{MuS16}) the Toeplitz algebra $\mathcal{T}(\mathbb{C}^d)$ is a subalgebra of the weighted Toepliz algebra $\mathcal{T}(\mathbb{C}^d, Z)=C^*(ZS_i:i=1,...,d)$. It also follows that $\mathcal{T}(\mathbb{C}^d, Z)=C^*(ZS_i:i=1,...,d)$ contains the algebra $\mathcal{K}(\mathcal{F}(\mathbb{C}^d)$ of the compact operators on the Fock space.

\begin{rem}
	We assumed that $Z_k$ satisfy condition 3. above in order to use \cite[Proposition 6.5]{MuS16} and conclude that $\mathcal{T}(\mathbb{C}^d)$ is a subalgebra of the weighted Toepliz algebra $\mathcal{T}(\mathbb{C}^d, Z)$. However, the following lemma (which is well known when $d=1$) shows that we can assume that the operators $\{Z_k\}$ are positive and the structure of the algebra  $\mathcal{T}(\mathbb{C}^d, Z)$ (and its quotient by the compact operators) will not change.
	\end{rem}
\begin{lem}\label{ueq}
Let $\{Z_k\}$ is a sequence of operators $Z_k\in B((\mathbb{C}^d)^{\otimes k})$ that satisfy conditions 1.-3. above  and let $\mathcal{T}(\mathbb{C}^d, Z)=C^*(ZS_i:i=1,...,d)$ as above. Then there is a sequence $\{Z_k'\}$ of positive operators satisfying 1.-3. above such that the algebra $\mathcal{T}':=	C^*(Z'S_i:i=1,...,d)$ is unitarily equivalent to $C^*(ZS_i:i=1,...,d)$. 
	
	\end{lem}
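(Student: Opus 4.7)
The natural strategy is a unitary diagonal conjugation defined inductively by polar decomposition on each tensor level. Writing $W_i = ZS_i$ with $Z = Z_0 \oplus Z_1 \oplus \cdots$, I want to produce a unitary $U = U_0 \oplus U_1 \oplus \cdots$ on $\mathcal{F}(\mathbb{C}^d)$ with each $U_n$ a unitary on $(\mathbb{C}^d)^{\otimes n}$ so that $U^* W_i U = Z' S_i$ for all $i$, where $Z' = Z_0' \oplus Z_1' \oplus \cdots$ and each $Z_k' \geq 0$. A direct computation on a simple tensor $\xi \in (\mathbb{C}^d)^{\otimes n}$ yields
\[
U^* W_i U \xi \;=\; U_{n+1}^* Z_{n+1}\bigl(e_i \otimes U_n \xi\bigr) \;=\; U_{n+1}^* Z_{n+1}(I_{\mathbb{C}^d} \otimes U_n)(e_i \otimes \xi),
\]
so the desired identity $U^* W_i U \xi = Z_{n+1}'(e_i \otimes \xi)$ holds for every $i$ and $\xi$ if and only if
\[
Z_{n+1}(I_{\mathbb{C}^d} \otimes U_n) \;=\; U_{n+1} \, Z_{n+1}'
\]
as operators on $(\mathbb{C}^d)^{\otimes(n+1)}$.

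This formula is precisely a polar decomposition, so the recursion almost writes itself. Take $U_0 = I$ and $Z_0' = I$. Assuming $U_n$ is a unitary on $(\mathbb{C}^d)^{\otimes n}$, set $A_n := Z_{n+1}(I_{\mathbb{C}^d}\otimes U_n)$. Because $Z_{n+1}$ and $U_n$ are invertible, so is $A_n$, and its polar decomposition $A_n = U_{n+1} Z_{n+1}'$ has $Z_{n+1}' := (A_n^* A_n)^{1/2}$ positive and invertible, and $U_{n+1}$ unitary. This defines the sequences $\{U_n\}$ and $\{Z_n'\}$ inductively and, by construction, gives $U^* W_i U = Z' S_i$ on each level; hence on all of $\mathcal{F}(\mathbb{C}^d)$.

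It remains to check that $\{Z_k'\}$ is a weight sequence in the sense of the preceding definition. Condition 1 is immediate from $Z_0' = I$. For conditions 2 and 3, the key observation is that $I_{\mathbb{C}^d} \otimes U_n$ is unitary, so
\[
\|Z_{n+1}'\| \;=\; \|A_n\| \;=\; \|Z_{n+1}\|, \qquad \|(Z_{n+1}')^{-1}\| \;=\; \|A_n^{-1}\| \;=\; \|Z_{n+1}^{-1}\|,
\]
and the uniform bounds $\|Z_k\|\leq M$ and $|Z_k|\geq \epsilon I$ transfer verbatim to $\{Z_k'\}$. Finally, $\mathrm{Ad}\,U$ is a $*$-isomorphism of $B(\mathcal{F}(\mathbb{C}^d))$ carrying $\{W_i\}$ onto $\{Z'S_i\}$, so it implements a unitary equivalence between $C^*(ZS_i : 1\leq i\leq d)$ and $C^*(Z'S_i : 1\leq i\leq d)$.

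No step looks genuinely hard; the only place to be careful is verifying the identity $U^* W_i U = Z'S_i$ holds level-wise and noting that invertibility of each $Z_{n+1}$ (guaranteed by condition 3) is what makes the polar decomposition produce a genuine unitary rather than a mere partial isometry. Everything else is bookkeeping.
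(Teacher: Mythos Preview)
Your proof is correct and follows essentially the same approach as the paper: both construct the unitaries $U_n$ inductively via polar decomposition of $Z_{n+1}(I_{\mathbb{C}^d}\otimes U_n)$, yielding $Z_{n+1}' = U_{n+1}^* Z_{n+1}(I_{\mathbb{C}^d}\otimes U_n) \geq 0$ and hence $U^* Z S_i U = Z' S_i$. Your write-up is in fact more detailed than the paper's, explicitly verifying the level-wise identity on simple tensors and checking that the new sequence inherits the weight-sequence bounds.
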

\begin{proof}
We construct $Z_k'$ inductively using the fact that, given an invertible  operator $A$, there is a unitary operator $V$ such that $V^*A\geq 0$ (simply use the polar decomposition of the invertible operator $A$) . Let $Z'_0=I$. Since $Z_1$ is invertible, we have a unitary operator $U_1$ such that $Z_1':=U_1^*Z_1\geq 0$. Since $Z_2(I_{\mathbb{C}^d}\otimes U_1)$ is invertibe, there is a unitary operator $U_2$ such that $Z_2':=U_2^*(Z_2(I_{\mathbb{C}^d}\otimes U_1))\geq 0$. Continuing this way, we get a sequence of unitary operators $\{U_k:k\geq 0\}$ (with $U_0=I$) such that, for every $k$, $$Z_k':=U_{k}^*Z_k (I_{\mathbb{C}^d}\otimes U_{k-1})\geq 0.$$ The sequence $\{U_k\}$ defines a unitary operator $U\in B(\mathcal{F}(\mathbb{C}^d))$ such that $$U^*ZS_iU=Z'S_i$$ for all $i$.	
	
	\end{proof}

We write $P_n$ for the projection of the Fock space $\mathcal{F}(\mathbb{C}^d)$ onto $(\mathbb{C}^d)^{\otimes n}$. 
The following lemma is a consequence of the inclusion $\mathcal{T}(\mathbb{C}^d)\subseteq \mathcal{T}(\mathbb{C}^d, Z)$. 
\begin{lem}\label{T}
The operators $P_k$ ($0\leq k$), $S_i$ ($1\leq i \leq d$) and $Z$ are all contained in $\mathcal{T}(\mathbb{C}^d, Z)$. So, $\mathcal{T}(\mathbb{C}^d, Z)$ is the $C^*$-algebra generated by $\{I,S_i,Z\}$.
\end{lem}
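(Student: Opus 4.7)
The plan is to use the already-cited inclusion $\mathcal{T}(\mathbb{C}^d)\subseteq \mathcal{T}(\mathbb{C}^d, Z)$ (from \cite[Proposition 6.5]{MuS16}) to immediately get $S_i$ and $P_k$ for free, and then to exhibit $Z$ as an explicit polynomial expression in the generators $W_i=ZS_i$ and their adjoints together with elements already known to lie in the algebra.

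More precisely, I would first record that the $S_i$ lie in $\mathcal{T}(\mathbb{C}^d, Z)$ by the cited inclusion. For the projections $P_k$, I would use the well-known identities in the unweighted Toeplitz algebra $\mathcal{T}(\mathbb{C}^d)$: namely $P_0=I-\sum_{i=1}^{d}S_iS_i^*$, and for $k\geq 1$
\begin{equation*}
P_k=\sum_{|\alpha|=k}S_\alpha P_0 S_\alpha^*,
\end{equation*}
so each $P_k\in \mathcal{T}(\mathbb{C}^d)\subseteq \mathcal{T}(\mathbb{C}^d, Z)$.

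The only substantive step is extracting $Z$. Since $W_i=ZS_i$, I would compute
\begin{equation*}
\sum_{i=1}^{d}W_iS_i^*=Z\sum_{i=1}^{d}S_iS_i^*=Z(I-P_0).
\end{equation*}
Because $Z_0=I$, the operator $Z$ acts as the identity on the vacuum summand, i.e.\ $ZP_0=P_0$. Consequently
\begin{equation*}
Z=Z(I-P_0)+ZP_0=\sum_{i=1}^{d}W_iS_i^*+P_0,
\end{equation*}
and the right-hand side lies in $\mathcal{T}(\mathbb{C}^d, Z)$ since each $W_i$ is a generator, each $S_i^*$ was obtained in the previous step, and $P_0\in\mathcal{T}(\mathbb{C}^d)\subseteq \mathcal{T}(\mathbb{C}^d, Z)$.

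Finally, for the second assertion, I would note containment both ways: $\{I,S_i,Z\}\subseteq \mathcal{T}(\mathbb{C}^d,Z)$ by what was just proved, and conversely $W_i=ZS_i$ sits in $C^*(I,S_1,\ldots,S_d,Z)$, so the two $C^*$-algebras coincide. There is no real obstacle here; the statement is essentially a bookkeeping consequence of Proposition 6.5 of \cite{MuS16}, and the one mildly clever point is the identity $Z=\sum_i W_i S_i^*+P_0$ that recovers the weight operator from the weighted shifts.
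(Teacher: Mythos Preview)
Your proof is correct and follows essentially the same approach as the paper: the paper also obtains $S_i$ and $P_k$ from the inclusion $\mathcal{T}(\mathbb{C}^d)\subseteq \mathcal{T}(\mathbb{C}^d, Z)$ via $P_0=I-\sum_i S_iS_i^*$ and $P_k=\sum_{|\alpha|=k}S_\alpha P_0 S_\alpha^*$, and then recovers $Z$ by writing $ZP_0=P_0$ and $ZP_0^\perp=\sum_i(ZS_i)S_i^*$, which is exactly your identity $Z=\sum_i W_iS_i^*+P_0$.
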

\begin{proof}
Clearly $S_i \in \mathcal{T}(\mathbb{C}^d, Z)$ and, therefore, also $P_0=I-\Sigma S_iS_i^*$ and $P_k=\Sigma_{|\alpha|=k} S_{\alpha}P_0S_{\alpha}^*$. For $Z$, note that $ZP_0=P_0\in \mathcal{T}(\mathbb{C}^d, Z)$ and $ZP_0^{\perp}=\Sigma_i (ZS_i)S_i^* \in \mathcal{T}(\mathbb{C}^d, Z)$.
\end{proof}
 Write $W_t:=\Sigma_{n=0}^{\infty}e^{int}P_n$
and $\gamma_t=Ad (W_t)$ for $t\in \mathbb{R}$ to get the gauge automorphism group on $B(\mathcal{F}(\mathbb{C}^d))$. 



Next, we write
$$\mathcal{D}:=\{X\in \mathcal{T}(\mathbb{C}^d,Z):\; \gamma_t(X)=X\; \forall t \;\} $$ and $$ F:=\{X\in \mathcal{T}(\mathbb{C}^d,Z):\; \gamma_t(X)=e^{it}X\; \forall t\;\} .$$
For simplicity, we shall write $\mathcal{T}$ for $\mathcal{T}(\mathbb{C}^d, Z)$.

We shall need the following well known definitions.

\begin{defn}
	\begin{enumerate}
		\item [(a)] $E$ is a (right) Hilbert $C^*$-module over the $C^*$-algebra $A$ if $E$ is a right $A$-module equipped with an $A$-valued inner product (assumed to be $A$-linear in the second term) which is complete in the norm $\|\xi\|^2:=\|\langle \xi,\xi\rangle\|$.
		\item[(b)] An operator $T:E\rightarrow E$ on a Hilbert $C^*$-module is said to be adjointable if there is $T^*:E\rightarrow E$ such that $\langle T\xi,\eta\rangle=\langle \xi, T^*\eta\rangle $ for all $\xi,\eta \in E$. The set of all adjointable operators on $E$ is a $C^*$-algebra denoted $\mathcal{L}(E)$.
		\item[(c)] Given $\xi,\eta \in E$, the operator $\theta_{\xi,\eta}$ on $E$ is defined by $\theta_{\xi,\eta}\zeta=\xi \langle \eta,\zeta\rangle$. The algebra of the (generalized) compact operators $K(E)$ is $\overline{\{\theta_{\xi,\eta}: \xi,\eta \in E\}}$ and it is a sub $C^*$-algebra (in fact, an ideal) of $\mathcal{L}(E)$.
		\item[(d)] A $C^*$-correspondence $E$ over a $C^*$-algebra $A$ is a right Hilbert $C^*$-module over $A$ that is also endowed with a $^*$-homomorphism $\varphi:A \rightarrow \mathcal{L}(E)$ that can be viewed as left multiplication of $A$ on $E$, making $E$ a bimodule over $A$. 
	\end{enumerate}
\end{defn}

In this paper, the algebra  will be unital and $\varphi$ will be a unital map.

\begin{lem}
\begin{enumerate}
\item[(1)] $F$ is a $C^*$-correspondence over the $C^*$-algebra $\mathcal{D}$ where the left and right actions are defined by multiplication and the $\mathcal{D}$-valued inner product is $\langle T,S\rangle=T^*S$.
\item[(2)] $\mathcal{D}=\{T\in \mathcal{T}: TP_k=P_kT , \;k\geq 0 \}\subseteq \Sigma^{\oplus}_{k\geq 0} P_k\mathcal{T}P_k$ and $F=\{T\in \mathcal{T}: TP_{k}=P_{k+1}T , \;k\geq 0 \}\subseteq \Sigma^{\oplus}_{k\geq 0} P_{k+1}\mathcal{T}P_k$.
\item[(3)] Writing $\varphi_F$ for the left action of $\mathcal{D}$ on $F$, we get $\ker(\varphi_F)=\mathcal{D}P_0=P_0\mathcal{T}P_0$.
\item[(4)] Write $K(F)$ for the algebra of generalized compact operators on $F$ then $K(F)=\varphi_F(\mathcal{D})=\mathcal{L}(F)$
\end{enumerate}
\end{lem}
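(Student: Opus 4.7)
The plan is to handle (1) and (2) via a Fourier/gauge argument, and then to deduce (3) and (4) by exploiting the explicit generators $W_i=ZS_i$ together with the invertibility of $Z$.

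For (2), I would use that $W_t=\sum_n e^{int}P_n$ implements $\gamma_t$. Writing a general $T\in\mathcal{T}$ as a formal sum $\sum_{n,m}P_nTP_m$, the identity $\gamma_t(T)=\sum_{n,m}e^{i(n-m)t}P_nTP_m$ combined with the uniqueness of Fourier coefficients forces $P_nTP_m=0$ whenever $n-m$ differs from the required value ($0$ for $\mathcal{D}$, $1$ for $F$). This gives the block descriptions, and the reverse implications are immediate. Part (1) then follows formally: $\mathcal{D}\cdot F\subseteq F$, $F\cdot\mathcal{D}\subseteq F$, and $T^*S\in\mathcal{D}$ for $T,S\in F$ all drop out of the eigenspace description of $\gamma_t$; the remaining Hilbert-module axioms are inherited from the ambient $C^*$-algebra $\mathcal{T}$, and $F$ is norm-closed because $\gamma_t$ is strongly continuous.

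For (3), the identity $\mathcal{D}P_0=P_0\mathcal{T}P_0$ follows because $P_0\in\mathcal{D}$, every $D\in\mathcal{D}$ commutes with $P_0$, and any $P_0TP_0$ automatically commutes with all $P_k$. One direction of $\ker\varphi_F=\mathcal{D}P_0$ is immediate from (2): if $T\in F$ then $P_0T=0$, so $\mathcal{D}P_0\cdot F=0$. For the reverse, if $D\in\ker\varphi_F$ then in particular $DW_i=DZS_i=0$ for each $i$; multiplying on the right by $S_i^*$ and summing yields $DZ\sum_i S_iS_i^*=DZP_0^\perp=0$. Since $Z\in\mathcal{D}$ is positive and invertible and commutes with $P_0^\perp$, this forces $DP_0^\perp=0$, hence $D=DP_0\in\mathcal{D}P_0$.

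Part (4) is the main point. I would exhibit the unweighted shifts themselves as a Parseval frame: set $V_i:=S_i=Z^{-1}W_i$, which lies in $F$ because $Z^{-1}\in\mathcal{D}$ and $W_i\in F$. Then $\sum_{i=1}^d V_iV_i^*=\sum_iS_iS_i^*=I-P_0=P_0^\perp$. For any $T\in F$, part (2) gives $P_0T=0$, so $T=P_0^\perp T=\sum_i V_iV_i^*T=\sum_i V_i\langle V_i,T\rangle$. This identity rewrites as $\operatorname{id}_F=\sum_i \theta_{V_i,V_i}$, which is a finite sum of rank-one compacts; composing with any $A\in\mathcal{L}(F)$ on the right gives $A=\sum_i\theta_{V_i,A^*V_i}\in K(F)$, so $\mathcal{L}(F)=K(F)$. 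For $K(F)=\varphi_F(\mathcal{D})$, note that $\theta_{S,T}=\varphi_F(ST^*)$ with $ST^*\in\mathcal{D}$ gives $K(F)\subseteq\varphi_F(\mathcal{D})$, while the reverse inclusion follows from $\varphi_F(D)=\varphi_F(D)\operatorname{id}_F=\sum_i\theta_{DV_i,V_i}\in K(F)$. The only place I expect any subtlety is the Fourier/uniqueness step in (2); once the block description is in hand, everything else is bookkeeping around the frame identity $\sum_i S_iS_i^*=P_0^\perp$ and the invertibility of $Z$ in $\mathcal{D}$.
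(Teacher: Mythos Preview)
Your proposal is correct and follows essentially the same line as the paper. The paper omits (1)--(3) as straightforward and for (4) observes exactly what you do: that $\theta_{\xi,\eta}=\varphi_F(\xi\eta^*)$ gives $K(F)\subseteq\varphi_F(\mathcal{D})$, and that $\varphi_F(I)=\varphi_F(P_0^\perp)=\sum_i\theta_{S_i,S_i}$ puts the identity in $K(F)$; your frame formulation $\operatorname{id}_F=\sum_i\theta_{S_i,S_i}$ is the same computation. One small remark: in (3) you do not need $Z$ to be positive (that only holds after the normalization of Lemma~\ref{ueq}); invertibility of $Z$ in $\mathcal{D}$ is all that is used, and your argument already goes through with just that.
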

\begin{proof}
The proof of the parts (1)-(3) is straightforward and is omitted.
For part (4), recall that $K(F)\subseteq \mathcal{L}(F)$ is the norm closed ideal generated by the operators $\theta_{\xi_1,\xi_2}$ (for $\xi_1,\xi_2 \in F$) where $$\theta_{\xi_1,\xi_2}\xi_3=\xi_1 \langle \xi_2,\xi_3\rangle=\xi_1\xi_2^*\xi_3.$$
But, then, $\theta_{\xi_1,\xi_2}=\varphi_F(\xi_1\xi_2^*)\in \varphi_F(\mathcal{D})$. Thus, $K(F)\subseteq \varphi_F(\mathcal{D})$ and
$$I=\varphi_F(I)=\varphi_F(P_0^{\perp})=\varphi_F(\Sigma_i S_iS_i^*)=\Sigma_i \theta_{S_i,S_i}\in K(F).$$
\end{proof}

Write $\mathcal{K}$ for the algebra of compact operators on the Fock space $\mathcal{F}(\mathbb{C}^d)$ and let $q:B(\mathcal{F}(\mathbb{C}^d))\rightarrow  B(\mathcal{F}(\mathbb{C}^d))/\mathcal{K}$ be the quotient map. It is straightforward to check that $q(F)$ is a $C^*$-correspondence over $q(\mathcal{D})$ (with the obvious operations and inner product).
\begin{lem}\label{qF1}
\begin{enumerate}
\item[(1)] $\ker(\varphi_{q(F)})=\{0\}$
\item[(2)] For $\xi,\eta \in F$ $$\theta_{q(\xi),q(\eta)}=\varphi_{q(F)}(q(\xi\eta^*)).$$
\item[(3)] $K(q(F))=\call(q(F))=\varphi_{q(F)}(\cald) $.

\end{enumerate}

\end{lem}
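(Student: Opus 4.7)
The plan is to dispatch part (2) first by a one-line unwinding of the definitions of the inner product on $q(F)$ and of $\theta_{\cdot,\cdot}$; both (1) and (3) will then invoke it, since the computation $\theta_{q(\xi),q(\eta)}(q(\zeta))=q(\xi\eta^*\zeta)=\varphi_{q(F)}(q(\xi\eta^*))(q(\zeta))$ is routine.

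For part (1), I start from the fact, established in the previous lemma, that $\ker(\varphi_F)=P_0\mathcal{T}P_0$, a rank-one (hence compact) subalgebra that automatically dies in the quotient. The real question is whether \emph{new} kernel elements arise in $q(\mathcal{D})$. So I take $T\in\mathcal{D}$ with $q(T)\in\ker(\varphi_{q(F)})$, which unwinds to $T\xi\in\mathcal{K}$ for every $\xi\in F$. The key move is to test against $\xi=S_i$ (which lies in $F$ by the previous lemma): summing $TS_iS_i^*\in\mathcal{K}$ over $i$ and using $\sum_i S_iS_i^*=I-P_0$ gives $T(I-P_0)\in\mathcal{K}$. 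Since $TP_0=P_0TP_0$ is a rank-one operator, it is compact, so $T\in\mathcal{K}$ and $q(T)=0$.

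For part (3), part (2) gives $K(q(F))\subseteq\varphi_{q(F)}(q(\mathcal{D}))$ directly. For the opposite inclusion, the idea is that $P_0\in\mathcal{K}$ forces $q(I)=q(\sum_iS_iS_i^*)$, so applying $\varphi_{q(F)}$ expresses the identity operator on $q(F)$ as the finite sum $\sum_i\theta_{q(S_i),q(S_i)}$. This places $\mathrm{id}_{q(F)}$ in $K(q(F))$, yielding $K(q(F))=\mathcal{L}(q(F))$, and then $\varphi_{q(F)}(q(\mathcal{D}))\subseteq\mathcal{L}(q(F))=K(q(F))$ completes the chain.

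The only real obstacle is the injectivity in (1): faithfulness of the left action on $q(F)$ hinges on the specific relation $\sum_iS_iS_i^*=I-P_0$ with $P_0$ compact, which lets one recover $T$ modulo $\mathcal{K}$ from its products against the generators of $F$. This injectivity, together with $K(q(F))=\mathcal{L}(q(F))$, is exactly the input needed to identify $\mathcal{T}/\mathcal{K}$ with a Cuntz--Pimsner algebra in the theorem that follows, which is presumably why this lemma is isolated here.
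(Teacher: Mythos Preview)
Your proof is correct and follows essentially the same approach as the paper: both use the identity $d=dP_0+\sum_i dS_iS_i^*$ together with $S_i\in F$ and $P_0\in\mathcal{K}$ to handle (1), and the relation $q(I)=\sum_i q(S_i)q(S_i)^*$ to handle (3). The only cosmetic difference is that in (3) the paper writes $\varphi_{q(F)}(q(d))=\sum_i\theta_{q(dS_i),q(S_i)}$ directly for each $d$, whereas you first do this for $d=I$ to get $\mathrm{id}_{q(F)}\in K(q(F))$ and then invoke $\varphi_{q(F)}(q(\mathcal{D}))\subseteq\mathcal{L}(q(F))$; these are the same computation.
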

\begin{proof}
Note first that, for $d\in \cald$,
\begin{equation}\label{d}
d=dP_0+\Sigma_i dS_iS_i^*
\end{equation}
Assume now that $q(d)\in \ker(\varphi_{q(F)})$. Then, $q(dS_i)=0$ (as $S_i\in F$) and, thus, $dS_i\in \calk$ and also $dS_iS_i^*\in \calk$. It follows from (\ref{d}) that $q(d)=0$. This proves (1).

For (2) we compute $\theta_{q(\xi),q(\eta)}q(\zeta)=q(\xi)q(\eta)^*q(\zeta)=q(\xi\eta^*)q(\zeta)=\varphi_{q(F)}(q(\xi)q(\eta)^*)q(\zeta)$.
This shows that $K(q(F))\subseteq \varphi_{q(F)}(\cald)$. For the converse, we have, for $d\in \cald$, $\varphi_{q(F)}(q(d))=\Sigma_i \varphi_{q(F)}(q((dS_i)S_i^*))=\Sigma_i \theta_{q(dS_i),q(S_i)} \in K(q(F))$.
\end{proof}

We now write $u_i:=q(S_i)$ and $z:=q(Z)$ and, for a word $\alpha$ of length $n$, $u_{\alpha}=u_{\alpha_1}\cdots u_{\alpha_n}$. The following properties of $\{u_{\alpha}\}$ will be used repeatedly.

\begin{lem}\label{alpha}
	\begin{enumerate}
		\item [(1)] For words $\alpha,\beta$ of the same length, $u_{\alpha}^*u_{\beta}=\delta_{\alpha,\beta}$
		\item[(2)] For every $n>0$, $\Sigma_{|\alpha|=n}u_{\alpha}u_{\alpha}^*=I$.
	\end{enumerate}
\end{lem}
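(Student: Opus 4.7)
The plan is to reduce both claims to standard identities for the unweighted creation operators $S_i$ on $\mathcal{F}(\mathbb{C}^d)$ and then push them through the quotient map $q$. Since $\calk$ contains every finite rank projection, the projections $P_k$ onto $(\mathbb{C}^d)^{\otimes k}$ all vanish in the quotient, and this is the only non-algebraic fact I will need.

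For part (1), I will first establish that the $S_i$ themselves satisfy $S_i^* S_j = \delta_{ij} I$. This follows by a direct computation on elementary tensors: $S_j (\xi_1 \otimes \cdots \otimes \xi_m) = e_j \otimes \xi_1 \otimes \cdots \otimes \xi_m$, so
$$\<S_i \eta, S_j \xi\> = \<e_i \otimes \eta, e_j \otimes \xi\> = \delta_{ij}\<\eta,\xi\>,$$
giving $S_i^* S_j = \delta_{ij} I$ on all of the Fock space. Then I iterate: for $\alpha=\alpha_1\cdots\alpha_n$ and $\beta=\beta_1\cdots\beta_n$,
$$S_\alpha^* S_\beta = S_{\alpha_n}^* \cdots S_{\alpha_1}^* S_{\beta_1} \cdots S_{\beta_n} = \delta_{\alpha_1,\beta_1} \, S_{\alpha_n}^* \cdots S_{\alpha_2}^* S_{\beta_2}\cdots S_{\beta_n},$$
and an obvious induction on $n$ collapses this to $\delta_{\alpha,\beta} I$. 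Applying $q$ yields $u_\alpha^* u_\beta = \delta_{\alpha,\beta} I$ (the right-hand side being the unit of $\mathcal{T}/\calk$, as the statement implicitly abbreviates).

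For part (2), I observe that for each $n\geq 1$ the operator $\sum_{|\alpha|=n} S_\alpha S_\alpha^*$ is the orthogonal projection onto $\bigoplus_{k\geq n}(\mathbb{C}^d)^{\otimes k}$. Indeed, the $S_\alpha$ with $|\alpha|=n$ have pairwise orthogonal ranges (by part~(1)), each $S_\alpha S_\alpha^*$ is a projection, and summing over $\alpha$ with $|\alpha|=n$ on any tensor $\xi \in (\mathbb{C}^d)^{\otimes m}$ reproduces $\xi$ for $m\geq n$ (since every basis vector of $(\mathbb{C}^d)^{\otimes (m)}$ factors uniquely as $e_\alpha \otimes \eta$ for some $|\alpha|=n$) and gives $0$ for $m<n$. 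Therefore
$$I - \sum_{|\alpha|=n} S_\alpha S_\alpha^* = P_0 + P_1 + \cdots + P_{n-1}.$$
The right-hand side is a finite-rank projection, hence lies in $\calk$, and applying $q$ gives $\sum_{|\alpha|=n} u_\alpha u_\alpha^* = I$ in $\mathcal{T}/\calk$.

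There is no real obstacle here; the content is that passage to the Calkin-type quotient kills the defect $P_0+\cdots+P_{n-1}$ in the Toeplitz relation, turning the isometric conditions into Cuntz relations. The only thing to be careful about is the bookkeeping of the word order when multiplying $S_\alpha^* S_\beta$, since $S_\alpha = S_{\alpha_1}\cdots S_{\alpha_n}$ and so $S_\alpha^* = S_{\alpha_n}^* \cdots S_{\alpha_1}^*$.
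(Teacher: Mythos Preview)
Your proof is correct and follows essentially the same approach as the paper: part (1) is reduced to $S_i^*S_j=\delta_{ij}I$ and then iterated, and part (2) uses $\sum_{|\alpha|=n}S_\alpha S_\alpha^* = I - \sum_{k=0}^{n-1}P_k$ together with $q(P_k)=0$. The paper's proof is simply a terser version of what you wrote.
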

\begin{proof}
	Part (1) follows from the fact that $S_i^*S_i=I$ and $S_i^*S_j=0$ if $i\neq j$. For part (2) note that $\Sigma_{|\alpha|=n}S_{\alpha}S_{\alpha}^*=I-\Sigma_{k=0}^{n-1}P_k$ and $q(P_k)=0$.
\end{proof}

\begin{lem}\label{qF}
	$$q(F)=\Sigma_i u_iq(\cald)$$ where $u_i=q(S_i)$.
	More generally, if we write $F^n$ for $span\{a_1\cdots a_n :\; a_i\in F\}$ then
	$$q(F^n)=\Sigma_{|\alpha|=n}u_{\alpha}q(\cald).$$
\end{lem}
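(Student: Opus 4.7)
The plan is to establish the stronger identity $F^n=\sum_{|\alpha|=n} S_\alpha \mathcal{D}$ already inside $\mathcal{T}$; the lemma then follows by applying the quotient map $q$ and using $q(S_\alpha)=u_\alpha$. I split this identity into two inclusions, both routed through the shift characterization $TP_k=P_{k+n}T$ for all $k\geq 0$.

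For $F^n\subseteq \sum_{|\alpha|=n}S_\alpha \mathcal{D}$: every product $a_1\cdots a_n$ with $a_i\in F$ satisfies the shift identity $TP_k=P_{k+n}T$ by iterating the defining relation of $F$. Given any such $T$, the range of $T$ lies in the sum of levels $\geq n$, so $P_jT=0$ for $0\leq j<n$. Combined with the identity $\sum_{|\alpha|=n}S_\alpha S_\alpha^*=I-\sum_{j=0}^{n-1}P_j$ (noted in the proof of Lemma~\ref{alpha}), this yields
$$T=\sum_{|\alpha|=n}S_\alpha(S_\alpha^*T).$$
Each tail $S_\alpha^*T$ lies in $\mathcal{T}$ automatically, and satisfies $S_\alpha^*TP_k=S_\alpha^*P_{k+n}T=P_kS_\alpha^*T$, using that $S_\alpha^*$ lowers level by $n$; hence $S_\alpha^*T\in \mathcal{D}$.

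For the reverse inclusion $\sum_{|\alpha|=n}S_\alpha \mathcal{D}\subseteq F^n$: for $d\in \mathcal{D}$, the element $S_{\alpha_n}d$ lies in $F$, since $(S_{\alpha_n}d)P_k=S_{\alpha_n}P_kd=P_{k+1}(S_{\alpha_n}d)$, using that $d$ commutes with each $P_k$ and that $S_{\alpha_n}$ raises level by one. Factoring $S_\alpha d=S_{\alpha_1}\cdots S_{\alpha_{n-1}}(S_{\alpha_n}d)$ then exhibits $S_\alpha d$ as an $n$-fold product of elements of $F$, hence in $F^n$. Combining both inclusions and applying $q$ completes the proof.

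The only point requiring a moment's thought is the vanishing $P_jT=0$ for $j<n$, which is immediate from the shift relation; once this is in hand, the rest is mechanical bookkeeping with the Cuntz--Toeplitz relations of Lemma~\ref{alpha}, so I expect no serious obstacle.
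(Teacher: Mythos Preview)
Your argument is correct, and in fact proves the slightly stronger unquotiented identity $F^n=\sum_{|\alpha|=n}S_\alpha\mathcal{D}$ inside $\mathcal{T}$. The paper takes a different, shorter route: it works directly in the quotient, where Lemma~\ref{alpha} gives the exact Cuntz relation $\sum_{|\alpha|=n}u_\alpha u_\alpha^*=I$, so that $q(F^n)=\sum_\alpha u_\alpha(u_\alpha^* q(F^n))\subseteq \sum_\alpha u_\alpha q(\mathcal{D})$ is immediate, with the reverse inclusion declared obvious. Your approach needs the extra step $P_jT=0$ for $j<n$ precisely because in $\mathcal{T}$ one only has $\sum_\alpha S_\alpha S_\alpha^*=I-\sum_{j<n}P_j$; the paper sidesteps this by passing to the quotient first, where the finite-rank correction disappears. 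What you gain is the lifted statement in $\mathcal{T}$ itself; what the paper gains is brevity.
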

\begin{proof} Fix $n\geq 1$.
We have $\Sigma_{|\alpha|=n} u_{\alpha}u_{\alpha}^*=I$ and, therefore, $q(F^n)=\Sigma_{|\alpha|=n} u_{\alpha}u_{\alpha}^*q(F^n)=\Sigma_{|\alpha|=n} u_{\alpha} (u_{\alpha}^*q(F^n))\subseteq \Sigma_{\alpha} u_{\alpha}q(\cald)$. The converse inclusion is obvious.	
	\end{proof}

The main result of this section is the following theorem.

\begin{thm}\label{isomorphism}
The $C^*$-algebra $\mathcal{T}(\mathbb{C}^d,Z)/\mathcal{K}$ is $*$-isomorphic to the Cuntz-Pimsner algebra $\mathcal{O}(q(F),q(\mathcal{D}))$.
\end{thm}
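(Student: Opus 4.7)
The plan is to exhibit a covariant Toeplitz representation of $(q(F),q(\cald))$ into $\calt/\calk$, invoke the universal property of the Cuntz-Pimsner algebra to get a surjection, and then apply the gauge-invariant uniqueness theorem for injectivity.

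First I would take the inclusions $\pi := \mathrm{id}\restrict_{q(\cald)} : q(\cald) \hookrightarrow \calt/\calk$ and $t := \mathrm{id}\restrict_{q(F)} : q(F) \hookrightarrow \calt/\calk$ and observe that the bimodule structure and inner product on $q(F)$ are inherited from multiplication and $\langle x,y\rangle = x^*y$ in $\calt/\calk$, so automatically $\pi(a)t(\xi) = t(\varphi_{q(F)}(a)\xi)$, $t(\xi)\pi(a) = t(\xi a)$, and $t(\xi)^*t(\eta) = \pi(\langle \xi,\eta\rangle)$. Hence $(\pi,t)$ is a Toeplitz representation. To promote it to a Cuntz-Pimsner (Katsura-covariant) representation, recall from Lemma~\ref{qF1} that $\varphi_{q(F)}$ is injective and $K(q(F)) = \call(q(F)) = \varphi_{q(F)}(q(\cald))$, so Katsura's ideal is all of $q(\cald)$; covariance reduces to the identity $\pi(a) = \psi_t(\varphi_{q(F)}(a))$ for all $a\in q(\cald)$, where $\psi_t$ is the induced representation of $K(q(F))$. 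Using Lemma~\ref{qF1}(2), for $a = \xi\eta^*$ with $\xi,\eta\in F$ one has $\psi_t(\varphi_{q(F)}(q(\xi\eta^*))) = t(q(\xi))t(q(\eta))^* = q(\xi)q(\eta)^* = q(\xi\eta^*)$, and then the relation $\Sigma_i u_iu_i^* = I$ from Lemma~\ref{alpha}(2) verifies covariance on the unit and hence everywhere by the decomposition $d = \Sigma_i dS_iS_i^*$ in $q(\cald)$ (the $P_0$ term vanishes in the quotient).

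By the universal property of $\mathcal{O}(q(F),q(\cald))$, I then obtain a $*$-homomorphism $\Phi : \mathcal{O}(q(F),q(\cald)) \to \calt/\calk$ extending $\pi$ and $t$. For surjectivity I would argue that $\calt/\calk$ is generated as a $C^*$-algebra by the image: indeed $q(W_i) = q(Z)q(S_i) \in q(\cald)\cdot q(F) \subseteq q(F)$ for every $i$, so the generators of $q(\calt) = \calt/\calk$ already lie in $t(q(F))$, and hence the image of $\Phi$ contains all generators and equals $\calt/\calk$.

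For injectivity I would use the gauge-invariant uniqueness theorem. The gauge group $\{\gamma_t\}$ on $B(\mathcal{F}(\mathbb{C}^d))$ preserves $\calk$ (since each $W_t$ is unitary and $\calk$ is $*$-ideal), so it descends to a strongly continuous $\mathbb{T}$-action on $\calt/\calk$; by construction $q(\cald)$ is the fixed-point subalgebra and $q(F)$ is the spectral subspace of weight $1$, so $\Phi$ intertwines the canonical gauge action on $\mathcal{O}(q(F),q(\cald))$ with the quotiented gauge action. Since $\pi$ is the identity on $q(\cald)$ it is certainly injective, so by the gauge-invariant uniqueness theorem (applicable since $\varphi_{q(F)}$ is injective) $\Phi$ is injective, completing the isomorphism. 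The step I expect to be the main technical point is the verification of Katsura covariance; everything else is either bookkeeping or a direct appeal to the previously established lemmas, but one must be careful that $K(q(F))$ genuinely coincides with $\mathcal{L}(q(F))$ so that the Katsura ideal is the full coefficient algebra.
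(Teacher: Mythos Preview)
Your proposal is correct and follows essentially the same route as the paper: the paper likewise defines the inclusions of $q(\cald)$ and $q(F)$ into $\calt/\calk$, verifies the Toeplitz and Katsura-covariance conditions via Lemma~\ref{qF1} and the identity $q(d)=\sum_i q(dS_i)q(S_i)^*$, observes surjectivity from the generators, and then concludes injectivity by invoking Katsura's gauge-invariant uniqueness theorem. The only cosmetic difference is that the paper phrases surjectivity using the generating set $\{I,S_i,Z\}$ for $\calt$ rather than $\{W_i\}$, but the substance of the argument is identical.
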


Recall that, given a $C^*$-correspondence $X$ over a $C^*$-algebra $A$ and a $C^*$-algebra $B$. A $^*$-representation $\pi:\mathcal{O}(X,A)\rightarrow B$ is given by a pair of maps:
a contractive map $T:X\rightarrow B$ and a $^*$-homomorphism $\sigma: A\rightarrow B$ such that, for $x,y\in X$ and $a,b\in A$,
\begin{enumerate}
\item[(1)] $T(\varphi_X(a)x b)=\sigma(a)T(x)\sigma(b)$
\item[(2)] $T(x)^*T(y)=\sigma(\langle x,y \rangle)$
\item[(3)] $\sigma^{(1)}(\varphi_X(a))=\sigma(a)$ for every $a\in J_X:=\varphi_X^{-1}(K(X))\cap (\ker(\varphi_X))^{\perp}$ where $\sigma^{(1)}:K(X)\rightarrow B$ is a $^*$-homomorphism defined by $\sigma^{(1)}(\theta_{x,y})=T(x)T(y)^*$.

\end{enumerate}

\begin{lem}\label{homomorphism}
With $q(F)$ and $q(\mathcal{D})$ as above, define
$$ \sigma: q(\mathcal{D}) \rightarrow \mathcal{T}/\mathcal{K}$$ by $\sigma(q(d))=q(d)$ and
$$T: q(F) \rightarrow \mathcal{T}/\mathcal{K}$$ by $T(q(\xi))=q(\xi)$.

Then $\sigma$ and $T$ satisfy conditions (1)-(3) above and, thus, define a $^*$-homomorphism $\pi$ of $\mathcal{O}(q(F),q(\mathcal{D}))$ onto $\mathcal{T}/\mathcal{K}$.

\end{lem}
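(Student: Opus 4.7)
The plan is to verify the three covariance conditions directly, using the fact that $T$ and $\sigma$ are just the inclusions of $q(F)$ and $q(\mathcal{D})$, respectively, as subsets of $\mathcal{T}/\mathcal{K}$, and then to derive the homomorphism $\pi$ from the universal property of $\mathcal{O}(q(F),q(\mathcal{D}))$.

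First I would note that $\sigma$ and $T$ are well-defined: $q(\mathcal{D})$ is a $C^*$-subalgebra of $\mathcal{T}/\mathcal{K}$ and $q(F)$ is a subspace, with inner product and bimodule structure induced by multiplication in $\mathcal{T}/\mathcal{K}$. So $\sigma$ is a $*$-homomorphism and $T$ is a (contractive) linear map. Condition (1) is then immediate: for $d_1,d_2\in \cald$ and $\xi\in F$, $T(\varphi_{q(F)}(q(d_1))\,q(\xi)\,q(d_2))=q(d_1\xi d_2)=\sigma(q(d_1))T(q(\xi))\sigma(q(d_2))$. Condition (2) is equally direct: $T(q(\xi))^*T(q(\eta))=q(\xi^*\eta)=\sigma(q(\langle \xi,\eta\rangle))$.

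For condition (3), I would first identify the Katsura ideal $J_{q(F)}$. By Lemma~\ref{qF1}(1), $\ker(\varphi_{q(F)})=\{0\}$, so $(\ker\varphi_{q(F)})^{\perp}=q(\cald)$; and by Lemma~\ref{qF1}(3), $\varphi_{q(F)}^{-1}(K(q(F)))=q(\cald)$. Hence $J_{q(F)}=q(\cald)$, so condition (3) must be checked on all of $q(\cald)$. Following the computation in the proof of Lemma~\ref{qF1}(3), for $d\in\cald$ one has $\varphi_{q(F)}(q(d))=\sum_i\theta_{q(dS_i),q(S_i)}$. Applying $\sigma^{(1)}$ term by term gives
\[
\sigma^{(1)}(\varphi_{q(F)}(q(d)))=\sum_i T(q(dS_i))T(q(S_i))^*=\sum_i q(dS_iS_i^*)=q\!\left(d\sum_i S_iS_i^*\right)=q(d(I-P_0)),
\]
and since $P_0$ is a rank-one projection and hence lies in $\calk$, this equals $q(d)=\sigma(q(d))$, as required. (Well-definedness of $\sigma^{(1)}$ on $K(q(F))$ is automatic from condition (2), which I have already verified.)

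By the universal property of the Cuntz–Pimsner algebra, the pair $(T,\sigma)$ then induces a $*$-homomorphism $\pi:\mathcal{O}(q(F),q(\cald))\to\mathcal{T}/\calk$. For surjectivity, I would observe that the image of $\pi$ contains $u_i=T(q(S_i))$ for every $i$ and $q(Z)=\sigma(q(Z))$ (since $Z\in\cald$ by gauge-invariance); by Lemma~\ref{T}, the set $\{I,S_i,Z\}$ generates $\mathcal{T}$, so its image generates $\mathcal{T}/\calk$. The only step that requires any thought is the identification of $J_{q(F)}$ and the computation in condition (3); everything else is a straightforward bookkeeping verification.
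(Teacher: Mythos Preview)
Your proof is correct and follows essentially the same route as the paper: verify conditions (1)--(3) directly from the fact that $\sigma$ and $T$ are inclusions, identify $J_{q(F)}=q(\cald)$ via Lemma~\ref{qF1}, compute $\sigma^{(1)}(\varphi_{q(F)}(q(d)))$ using $\varphi_{q(F)}(q(d))=\sum_i\theta_{q(dS_i),q(S_i)}$, and deduce surjectivity from Lemma~\ref{T}. Your version is slightly more explicit in a couple of places (spelling out why $J_{q(F)}=q(\cald)$ and why $q(d(I-P_0))=q(d)$), but there is no substantive difference.
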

\begin{proof}
It is clear that $\sigma$ is a well defined $^*$-homomorphism and $T$ is a contraction. It is also clear that $T$ is a bimodule map (over $\sigma$). For (2), we write
$$T(q(\xi))^*T(q(\eta))=q(\xi)^*q(\eta)=q(\xi^*\eta)=\sigma(\langle q(\xi),q(\eta)\rangle). $$
For (3), note first that it follows from Lemma~\ref{qF1} that $J_{q(F)}=q(\cald)$ and, for $d\in \cald$,
$$\varphi_{q(F)}(q(d))=\Sigma_i \theta_{q(dS_i),q(S_i)}.$$
Thus $\sigma^{(1)}(\varphi_{q(F)}(q(d)))=\Sigma_i T(q(dS_i))T(q(S_i))^*=\Sigma_i q(dS_i)q(S_i)^*=q(d)=\sigma(q(d))$ proving (3). Since $\calt$ is generated by $\{I,S_i,Z\}$ and $I, Z \in \cald$ and $S_i\in F$, $\calt / \calk$ is generated by the images of $\sigma$ and $T$. Thus, $\pi$ is onto $\calt /\calk$.

\end{proof}

\begin{proof} (of Theorem~\ref{isomorphism})
In view of Lemma~\ref{homomorphism} all we need to prove is that the $*$-homomorphism $\pi$ is injective but this follows from Theorem 6.4 of \cite{Ka}, since $\sigma$ is injective and $\pi$ admits a gauge action. Indeed, the injectivity of $\sigma$
is clear from his definition, and if $d\in \mathcal{D}$ and $\xi\in F$, then $\gamma_t(d)=d$ and $\gamma_t(S_i)=e^{it}S_i$. Hence, $\gamma_t(\sigma(q(d)))=\gamma_t(q(d))=q(d)=\sigma(q(d))$ and $\gamma_t(T(q(S_i)))=e^{it}T(q(S_i))$.
\end{proof}


\section{The algebra $q(\cald)$ and its ideals}

In this section we shall study the algebra $q(\cald)$ and its ideals. We shall pay special attention to those ideals that are \emph{invariant} in the following sense.

\begin{defn}\label{idinv}
An ideal $J\subseteq q(\mathcal{D})$ is said to be invariant if 
$$q(F)^*Jq(F)\subseteq J .$$ Using Lemma~\ref{qF}, this is equivalent to
$$u_i^*Ju_j \subseteq J$$ for all $i,j$.	
	
	\end{defn}

Note that the automorphism group $\gamma=\{\gamma_t\}$, on $B(\mathcal{F}(\mathbb{C}^d))$, as defined in the discussion following Lemma~\ref{T}, can be used to define bounded projections $\Phi_j$ on $B(\mathcal{F}(\mathbb{C}^d))$ by
$$\Phi_j(X)=\frac{1}{2\pi}\int_0^{2\pi}e^{-ijt}\gamma_t(X)dt .$$

 In particular, for $A\in \mathcal{T}$, $A\in \mathcal{D}$ if and only if $\Phi_0(A)=A$.

\begin{lem}\label{qD}
	$$q(\cald)=C^*(\{u_{\alpha}z^mu_{\beta}^*, u_{\alpha}^*z^mu_{\beta}:\;|\alpha|=|\beta|,\; m\geq 0\}).$$
\end{lem}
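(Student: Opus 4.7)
The strategy is to identify $q(\cald)$ with the closed linear span of gauge-invariant monomials in the generators $\{u_i, u_i^*, z, z^*\}$ of $q(\calt)$, and then verify that every such monomial lies in the $C^*$-algebra $\mathcal{A}$ on the right-hand side. The inclusion $\mathcal{A}\subseteq q(\cald)$ is the easy half: since $\calk$ is $\gamma_t$-invariant, the gauge action descends to $q(\calt)$ with fixed-point algebra $q(\cald)$; each $u_\alpha z^m u_\beta^*$ carries gauge weight $|\alpha|-|\beta|$ and each $u_\alpha^* z^m u_\beta$ carries weight $|\beta|-|\alpha|$, so the condition $|\alpha|=|\beta|$ forces both types of generators to lie in $q(\cald)$.

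For the reverse inclusion, $q(\calt)$ is generated by $\{I,u_i,z\}$ (by Lemma~\ref{T}), and the continuous conditional expectation $\tilde\Phi_0:q(\calt)\to q(\cald)$ shows that $q(\cald)$ equals the closed linear span of gauge-invariant monomials in $\{u_i,u_i^*,z,z^*\}$. I would prove by strong induction on the number $n(M)$ of ``charged'' letters (those from $\{u_i,u_i^*\}$) in such a monomial $M$ that $M\in\mathcal{A}$. The base case $n(M)=0$ is immediate: $z^m = u_\emptyset z^m u_\emptyset^*\in\mathcal{A}$ and $\mathcal{A}$ is $*$-closed, so $\mathcal{A}$ contains all polynomials in $z,z^*$. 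For the inductive step, inspect the partial gauge-weight path of $M$. If it returns to $0$ at some interior point, split $M=M_1M_2$ there; both $M_j$ are gauge-invariant with strictly fewer charged letters, so $M_j\in\mathcal{A}$ by induction, hence $M\in\mathcal{A}$. Otherwise $M$ is \emph{primitive}---the partial weight has one strict sign on the interior---and after peeling $z,z^*$ prefixes/suffixes (which lie in $\mathcal{A}$) we have $M = W_0\cdot(u_{i_1}^{\epsilon}\, N\, u_{i_k}^{-\epsilon})\cdot W_k$ with $\epsilon\in\{+1,-1\}$ and $N$ a gauge-invariant monomial with $n(N)=n(M)-2$; by induction $N\in\mathcal{A}$.

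The main technical step, and the main obstacle, is the auxiliary claim that $\mathcal{A}$ is invariant under the maps $a\mapsto u_i a u_j^*$ and $a\mapsto u_j^* a u_i$. By continuity, linearity, and $*$-closure of $\mathcal{A}$, it suffices to prove this when $a$ is a single listed generator. For $a=u_\alpha z^m u_\beta^*$ one has $u_i a u_j^* = u_{i\alpha} z^m u_{j\beta}^*$, again a listed generator. The delicate case is $a=u_\alpha^* z^m u_\beta$ with $|\alpha|=|\beta|=n\geq 1$: writing $u_\alpha^* = u_{\alpha_n}^*\cdots u_{\alpha_1}^*$ and $u_\beta = u_{\beta_1}\cdots u_{\beta_n}$, I would regroup
\[u_i u_\alpha^* z^m u_\beta u_j^* = (u_i u_{\alpha_n}^*)\,(u_{\alpha'}^* z^m u_{\beta'})\,(u_{\beta_n} u_j^*),\]
where $\alpha'=\alpha_1\cdots\alpha_{n-1}$ and $\beta'=\beta_1\cdots\beta_{n-1}$; the outer factors are listed generators with length-$1$ indices and $m=0$, and the middle factor is a listed generator of length $n-1$. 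For a product $a = g_1\cdots g_r$ of generators, Lemma~\ref{alpha}(1) yields $u_1^* u_1 = I$, so that
\[u_i a u_j^* = (u_i g_1 u_1^*)(u_1 g_2 u_1^*)\cdots (u_1 g_r u_j^*)\]
reduces to the single-generator case. The variant $u_j^* a u_i\in\mathcal{A}$ follows by applying the claim to $a^*\in\mathcal{A}$ and taking adjoints. Combining this auxiliary claim with the primitive-decomposition step completes the induction and hence the proof.
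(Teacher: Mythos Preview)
Your overall strategy matches the paper's: reduce to showing that every gauge-invariant monomial in $u_i,u_i^*,z,z^*$ lies in $\mathcal{A}$ (the paper simply asserts this is ``straightforward using Lemma~\ref{alpha}'' and gives no details). Your induction on the number of charged letters, the split/primitive dichotomy, and the auxiliary invariance claim $u_i\mathcal{A}u_j^*\subseteq\mathcal{A}$, $u_j^*\mathcal{A}u_i\subseteq\mathcal{A}$ are a correct and natural way to flesh this out.

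There is, however, one genuine gap. Your reduction ``the variant $u_j^* a u_i\in\mathcal{A}$ follows by applying the claim to $a^*$ and taking adjoints'' does not work: from $u_i a^* u_j^*\in\mathcal{A}$ one obtains $(u_i a^* u_j^*)^* = u_j a u_i^*\in\mathcal{A}$, which is again of the first form, not $u_j^* a u_i$. You do need the second direction (the primitive case with $\epsilon=-1$ requires it, e.g.\ for $M=u_1^* u_2^* z\, u_1 z\, u_2$), so this has to be proved independently. The fix is easy and parallel to your first direction, but uses the other Cuntz relation: by Lemma~\ref{alpha}(2) one has $\sum_k u_k u_k^* = I$, so for a product $a=g_1\cdots g_r$ of listed generators (or their adjoints),
\[
u_j^* a u_i \;=\; \sum_{k_1,\ldots,k_{r-1}} (u_j^* g_1 u_{k_1})(u_{k_1}^* g_2 u_{k_2})\cdots(u_{k_{r-1}}^* g_r u_i),
\]
and each factor $u_k^* g\, u_l$ lies in $\mathcal{A}$ by the single-generator verification (which goes through exactly as in your first direction: for $g=u_\alpha z^m u_\beta^*$ with $|\alpha|=|\beta|\ge 1$ it collapses via $u_k^*u_{\alpha_1}=\delta_{k,\alpha_1}$, and for $g=u_\alpha^* z^m u_\beta$ it gives $u_{\alpha k}^* z^m u_{\beta l}$).

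A minor point: in the split case you claim both $M_1,M_2$ have strictly fewer charged letters, but if the return to $0$ occurs in the $z,z^*$ prefix this fails. Peel the prefix/suffix first (so the remaining word begins and ends with a charged letter); then any interior return to $0$ gives both pieces at least one charged letter, hence strictly fewer than $n(M)$.
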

\begin{proof}
Fix $a\in q(\cald)$ and write $a=q(A)$ for some $A\in \cald$. Since $A\in  \calt$, it is a norm limit of (noncommutative) polynomials in $Z, Z^*, S_i, S_i^*$. Since $\Phi_0$ is a contractive projection, and $A=\Phi_0(A)$, we can assume, by applying $\Phi_0$, that each of these polynomials lie in $\cald$. Each such polynomial is a finite sum of monomials and each monomial lies in the range of some $\Phi_j$. By applying $\Phi_0$, we can assume that each monomial lies in $\cald$. Thus, $A$ is a norm limit of polynomials in $\{Z,Z^*,S_i, S_i^*\}$ where each monomial lies in $\cald$. It follows that $a$ is a norm limit of polynomials in $\{z,z^*,u_i, u_i^*\}$ where each monomial lies in $q(\mathcal{D})$. It is straightforward to check (using Lemma~\ref{alpha}) that each monomial in $q(\cald)$ is a product of elements in $\{u_{\alpha}z^mu_{\beta}^*, u_{\alpha}^*z^mu_{\beta}:\;|\alpha|=|\beta|\geq 0,\; m\geq 0\}$ or their adjoints.	
	
	\end{proof}

Recall that a unital $C^*$-algebra is said to be \emph{finite} if it has no proper isometries.

\begin{lem}\label{finite}
	The algebra $q(\mathcal{D})$ is finite.
\end{lem}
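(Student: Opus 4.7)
The plan is to exploit the block-diagonal structure of $\mathcal{D}$. By part (2) of the preceding lemma, $\mathcal{D} \subseteq \sum^{\oplus}_{k \geq 0} P_k \mathcal{T} P_k$; and since $\mathcal{K} \subseteq \mathcal{T}$ and each $P_k$ has rank $d^k$, the compression $P_k \mathcal{T} P_k$ coincides with all of $M_{d^k}(\mathbb{C})$. Thus every $D \in \mathcal{D}$ is a bounded block-diagonal sequence $D = \bigoplus_{k} D_k$ with $D_k \in M_{d^k}$, and $D^*, DD^*, D^*D$ all admit the analogous block decomposition.

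Now suppose $q(D)$ is an isometry in $q(\mathcal{D})$, i.e., $D^*D - I \in \mathcal{K}$. Under the block decomposition, $D^*D - I = \bigoplus_k (D_k^* D_k - I_{d^k})$. A block-diagonal operator on $\mathcal{F}(\mathbb{C}^d)$ with blocks acting on mutually orthogonal finite-dimensional subspaces is compact if and only if the norms of its blocks tend to zero; hence $\|D_k^* D_k - I_{d^k}\| \to 0$. My aim is to establish the same for $D_k D_k^*$, which will give $DD^* - I \in \mathcal{K}$ and hence $q(D)q(D)^* = q(I)$, proving that $q(D)$ is in fact a unitary in $q(\mathcal{D})$.

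The key elementary ingredient is that, for an invertible matrix $A$, the positive matrices $A^*A$ and $AA^*$ are similar via $A(A^*A)A^{-1} = AA^*$, and so have identical spectra. I would choose $N$ with $\|D_k^*D_k - I\| < 1/2$ for all $k > N$; each such $D_k$ is then invertible, so $D_k^*D_k$ and $D_k D_k^*$ share the same spectrum, which forces
\[
\|D_k D_k^* - I_{d^k}\| = \|D_k^* D_k - I_{d^k}\| \to 0 \quad \text{as } k \to \infty.
\]
The finitely many leftover blocks $\bigoplus_{k \leq N}(D_k D_k^* - I_{d^k})$ are supported on a finite-dimensional subspace of the Fock space and hence are automatically compact. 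Combining the two pieces gives $DD^* - I \in \mathcal{K}$, as required. I do not foresee a genuine obstacle: the argument reduces the question to the finite-dimensional fact that left-invertibility of a square matrix is equivalent to right-invertibility, with the only technical care being the need to peel off finitely many blocks in which $D_k$ might fail to be invertible.
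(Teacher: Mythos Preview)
Your proof is correct. Both you and the paper exploit the same structural fact: elements of $\mathcal{D}$ are block-diagonal with finite-dimensional blocks, and compactness of $D^*D-I$ forces all but finitely many $D_k$ to be invertible. The difference lies in the endgame. The paper constructs an explicit two-sided inverse: once $P_kVP_k$ is invertible for $k\geq N$, it assembles the blockwise inverses into a bounded operator $B$ (using the estimate $\|C_k\|<2$ to control the norm), replaces the bad initial blocks by identities, and concludes that $q(V)$ is invertible in the Calkin algebra. Your route is tidier: you sidestep the inverse construction entirely by invoking the spectral equality $\sigma(D_k^*D_k)=\sigma(D_kD_k^*)$ for invertible $D_k$, which immediately gives $\|D_kD_k^*-I\|=\|D_k^*D_k-I\|\to 0$ and hence $q(D)q(D)^*=I$. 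Your argument yields the slightly stronger conclusion that every isometry in $q(\mathcal{D})$ is unitary (not merely invertible), and avoids the bookkeeping of bounding $\|B_k\|$.
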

\begin{proof}
	Suppose $v=q(V)$ is an isometry in $q(\mathcal{D})$. Then $v^*v=I$ and, thus $V^*V=I+K$ for a compact operator $K$. Thus $\lim_{k\rightarrow \infty}\|P_k(V^*V-I)P_k\|=0$. Fix $N$ such that, for every $k\geq N$, $\|P_kV^*VP_k-P_k\|<1/2$. It follows that, for such $k$, $P_kV^*VP_k$ is invertible in $B(P_k\mathcal{F}(\mathbb{C}^d))\cong M_{d^k}(\mathbb{C})$. Write $C_k$ for its inverse and note that
	\begin{equation}\label{Cbdd} 
	\|C_k\|\leq \frac{1}{1-\|P_k-P_kV^*VP_k\|}<2.
	\end{equation}  
	Since $B(P_k\mathcal{F}(\mathbb{C}^d))$ is a matrix algebra and $P_kV^*VP_k=P_kV^*P_kVP_k$, it follows that $P_kVP_k$ is also invertible. Write $B_k$ for its inverse (so that $C_k=B_k^*B_k$) and $B:=I\oplus I \oplus \cdots \oplus I \oplus B_N\oplus B_{N+1} \oplus \cdots \in B(\mathcal{F}(\mathbb{C}^d))$. It follows from (\ref{Cbdd}) that $B$ is bounded (in fact $\|B\|\leq \sqrt{2}$) and it is the inverse of $V':=I\oplus I \oplus \cdots \oplus I \oplus P_NVP_N\oplus P_{N+1}VP_{N+1} \oplus \cdots$. Thus $V'$ is invertible in $B(\mathcal{F}(\mathbb{C}^d))$ and $q(V')$ is invertible in $B(\mathcal{F}(\mathbb{C}^d))/\mathcal{K}$. But $q(V')=q(V)=v$. Thus $v$ is invertible in $B(\mathcal{F}(\mathbb{C}^d))/\mathcal{K}$. Since $v\in q(\mathcal{D})$, it is also invertible there.	
	
\end{proof}

For the next proposition note that we can view $q(\mathcal{D})$ as a $C^*$-correspondence over itself with left and right actions given by multiplication and the inner product is $\langle d_1,d_2\rangle:=d_1^*d_2$.

\begin{prop}\label{nonperiodic}
	\begin{enumerate}
		\item[(1)] Suppose $d>1$, then there is no map $v:q(\cald)\rightarrow q(F^n)$ that is an isomorphism of correspondences (that is, a surjective bimodule map that preserves the inner product).
		\item[(2)] Suppose $d=1$ and $u=q(S)$. Then $z$ commutes with $u^n$ if and only if  there is a map $v:q(\cald)\rightarrow q(F^n)$ that is an isomorphism of correspondences .
	\end{enumerate} 
\end{prop}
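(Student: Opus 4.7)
The common first move in both parts is to translate the existence of a correspondence isomorphism $v\colon q(\cald)\to q(F^n)$ into the existence of a single element $w:=v(I)\in q(F^n)$ with strong properties. Being a right module map, $v(a)=v(I)\cdot a=wa$; being a left module map, $v(a)=a\cdot v(I)=aw$; hence $aw=wa$ for every $a\in q(\cald)$. Inner product preservation gives $w^*w=\langle v(I),v(I)\rangle=\langle I,I\rangle=I$. Surjectivity combined with $\sum_{|\alpha|=n}u_\alpha u_\alpha^*=I$ from Lemma~\ref{alpha} forces $ww^*=I$: indeed $ww^*u_\alpha=u_\alpha$ for every $\alpha$, and summing $ww^*u_\alpha u_\alpha^*=u_\alpha u_\alpha^*$ over $\alpha$ gives $ww^*=I$. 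Conversely, any unitary $w\in q(F^n)\subseteq\calt/\calk$ that commutes with $q(\cald)$ gives back an isomorphism $v(a):=wa$. So in both parts the question reduces to the existence of such a $w$.

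For part~(1), expand $w=\sum_{|\alpha|=n}u_\alpha a_\alpha$ with $a_\alpha\in q(\cald)$ via Lemma~\ref{qF}. Sandwiching the identity $ww^*=I$ between $u_\gamma^*$ on the left and $u_\delta$ on the right and using Lemma~\ref{alpha}(1) yields $a_\gamma a_\delta^*=\delta_{\gamma\delta}I$. In particular each $a_\alpha$ is a co-isometry, hence a unitary in $q(\cald)$ by the finiteness of $q(\cald)$ (Lemma~\ref{finite}). Since $d>1$ there exist distinct words $\gamma\neq\delta$ of length $n$, and then $a_\gamma a_\delta^*=0$ gives $a_\delta^*=a_\gamma^*(a_\gamma a_\delta^*)=0$, contradicting the unitarity of $a_\delta$.

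For part~(2), $d=1$ gives $q(F^n)=u^n q(\cald)$ by Lemma~\ref{qF}, and Lemma~\ref{alpha} specializes to $uu^*=u^*u=I$, so $u$ itself is unitary in $\calt/\calk$. For $(\Rightarrow)$, assume $zu^n=u^nz$; then $u^n$ commutes with every $z^m$ and, being a power of a normal element, with every $u^k$ and $u^{*k}$, so by Lemma~\ref{qD} it commutes with all of $q(\cald)$; hence $w:=u^n$ works. For $(\Leftarrow)$, write $w=u^na_0$ with $a_0\in q(\cald)$; unitarity of $u^n$ together with $w^*w=ww^*=I$ forces $a_0^*a_0=a_0a_0^*=I$, and the bimodule equation becomes $u^na_0 a=au^na_0$. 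The decisive extra input is that $q(\cald)$ is commutative when $d=1$: every element of $\cald$ commutes with each $P_k$, and since for $d=1$ each $P_k$ has rank one, $\cald\subseteq\bigoplus_k\mathbb{C}P_k$ is commutative, and so is its quotient $q(\cald)$. Using $a_0 a=a a_0$ the bimodule equation collapses to $u^n a a_0=au^n a_0$, and multiplying on the right by $a_0^*$ yields $u^na=au^n$ for every $a\in q(\cald)$; specializing to $a=z$ gives $u^nz=zu^n$.

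The main things to notice are, in part~(1), that the surjectivity of $v$ together with the Cuntz-type relation $\sum u_\alpha u_\alpha^*=I$ upgrades the isometry $w^*w=I$ to full unitarity of $w$ in $\calt/\calk$, which is what unlocks Lemma~\ref{finite}; and in part~(2) that the hypothesis $d=1$ forces the fibers to be one dimensional, hence $q(\cald)$ to be commutative, which is precisely what allows one to cancel the nuisance factor $a_0$. The rest is essentially bookkeeping with the Cuntz-type relations of Lemma~\ref{alpha} and the generators of Lemma~\ref{qD}.
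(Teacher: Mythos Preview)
Your argument is correct and rests on the same two pillars as the paper's proof: the finiteness of $q(\cald)$ (Lemma~\ref{finite}) for part~(1), and the commutativity of $q(\cald)$ when $d=1$ for part~(2). The organization, however, is genuinely different. You reduce everything upfront to a single unitary $w=v(I)\in q(F^n)$ commuting with $q(\cald)$, and then in part~(1) you decompose $w=\sum_{|\alpha|=n}u_\alpha a_\alpha$ and derive $a_\gamma a_\delta^*=\delta_{\gamma\delta}I$. The paper instead takes preimages $d_\alpha:=v^{-1}(u_\alpha)$ and shows $d_\alpha^*d_\beta=\delta_{\alpha\beta}I$; your $a_\alpha$ are exactly the $d_\alpha^*$, so the two computations are adjoints of each other. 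Your route has the small cost of first establishing $ww^*=I$ (which you do cleanly from surjectivity and $\sum u_\alpha u_\alpha^*=I$), but it pays off by unifying parts~(1) and~(2) under a single reformulation. In part~(2), your forward direction is slightly slicker than the paper's: you observe directly that $u^n$ commutes with each factor in the generators of Lemma~\ref{qD}, whereas the paper rewrites $u^{*k}z^mu^k$ explicitly to reduce to generators of the other type before concluding.
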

\begin{proof}
	Assume first that $d>1$. Fix $n\geq 1$ and $v:q(\cald)\rightarrow q(F^n)$ that is an isomorphism of correspondences.
	Fix two different words $\alpha,\beta$ of length $n$. Then $u_{\alpha},u_{\beta}\in q(F^n)$ and, thus, there are $d_{\alpha},d_{\beta}\in q(\mathcal{D})$ such that $v(d_{\alpha})=u_{\alpha}$ and $v(d_{\beta})=u_{\beta}$. Since $v$ preserves inner products, we have $d_{\alpha}^*d_{\beta}=\langle d_{\alpha},d_{\beta}\rangle=\langle v(d_{\alpha}),v(d_{\beta})\rangle = \langle u_{\alpha},u_{\beta}\rangle=u_{\alpha}^*u_{\beta}=0$. Similarly $d_{\alpha}^*d_{\alpha}=d_{\beta}^*d_{\beta}=I$. Thus $d_{\alpha},d_{\beta}$ are two isometries  in $q(\cald)$ with orthogonal ranges. By Lemma~\ref{finite}, this is impossible.
	
	Now, assume $d=1$ and $v:q(\cald)\rightarrow q(F^n)$ is an isomorphism of correspondences. There is $w\in q(\cald)$ such that $v(w)=u^n$ and $b\in q(\cald)$ such that $v(z)=u^nb$ (as $q(F^n)=u^nq(\cald)$). Then $v(z)=v(w)b=v(wb)$. Thus $z=wb$. Now, $u^{* n}zu^n=\langle v(w),v(zw) \rangle=\langle w,zw \rangle =w^*zw=w^*wz$ (since $q(\cald)$ is commutative when $d=1$). Similarly, $I=u^{* n}u^n=w^*w$ and we get $u^{* n}zu^n=z$. Thus, $z$ commutes with $u^n$.
	For the other direction, assume that $z$ commutes with $u^n$. By Lemma~\ref{qD} the algebra $q(\mathcal{D})$ is generated by $\{u^kz^mu^{* k}, u^{* k}z^mu^k \}$. But, writing $k=ln+r=(l+1)n-(n-r)$ (with $0 \leq r <n$), we have $u^{* k}z^mu^k=u^{n-r}u^{* (l+1)n}z^mu^{(l+1)n}u^{* (n-r)}=u^{n-r}z^mu^{* (n-r)}$.Thus, $q(\mathcal{D})$ is generated by $\{u^kz^mu^{* k}\}$. It follows that $u^n$ commutes with every $a\in q(\mathcal{D})$. Now we define $v:q(\mathcal{D})\rightarrow q(F^n)$ by $v(a)=u^na=au^n$ (so that $v(u^kz^mu^{* k})=u^nu^kz^mu^{* k}$). It is straightforward to check that $v$ is an isomorphism of correspondences.
	
	
\end{proof}

For a fixed $p\in \mathbb{N}$ we shall consider the following condition on $Z$:
\vspace{5mm}

\textbf{Condition A(p):} $\lim_{k\rightarrow \infty} \|I_p\otimes Z_k-Z_{k+p}\|=0$.

\vspace{5mm}
In particular, Condition A(p) holds if there is some $N>0$ such that, for $k\geq N$, $Z_{k+p}=I_{p}\otimes Z_k$, where $I_p$ is the identity matrix of size $d^p$.

Note also that, if Condition A(p) holds then it holds also for $\{Z_k^*\}$. That is, we have $\lim_{k\rightarrow \infty} \|I_p\otimes Z_k^*-Z_{k+p}^*\|=0$.

Recall that $q:\calt \rightarrow \calt/\calk$ is the quotient map and we write $u_i=q(S_i)$ and $z=q(Z)$.

\begin{lem}\label{uz}
$Z$ satisfies Condition A(p) if and only if $u_{\alpha}z=zu_{\alpha}$ for every word $\alpha$ with $|\alpha|=p$. In this case, we also have $u_{\alpha}^*z=zu_{\alpha}^*$ for $|\alpha|=p$.
\end{lem}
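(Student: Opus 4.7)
My plan is to reduce the lemma to a block-matrix computation on the graded Fock space and then read off the equivalence with Condition A(p). The key identity I want to establish is that for every word $\alpha$ of length $p$,
\[
P_{k+p}(ZS_\alpha - S_\alpha Z)P_k = (Z_{k+p} - I_p\otimes Z_k)\,S_\alpha P_k,
\]
with $I_p$ the identity on $(\mathbb{C}^d)^{\otimes p}$, and that $P_j(ZS_\alpha - S_\alpha Z)P_k = 0$ unless $j = k+p$ (this uses that $Z$ is block-diagonal and that $S_\alpha$ shifts the grading by $p$). So $ZS_\alpha - S_\alpha Z$ has a purely ``shifted'' block structure, and $u_\alpha z = z u_\alpha$ is equivalent to $ZS_\alpha - S_\alpha Z \in \mathcal{K}$.

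For the forward direction, assuming Condition A(p), I note that $\|S_\alpha P_k\|\leq 1$ so each block satisfies $\|(ZS_\alpha - S_\alpha Z)P_k\| \leq \|Z_{k+p}-I_p\otimes Z_k\|\to 0$. Because the sources $\{P_k\mathcal{F}\}$ are pairwise orthogonal and the images $\{P_{k+p}\mathcal{F}\}$ are as well, the tail $\sum_{k\geq N}(ZS_\alpha - S_\alpha Z)P_k$ has norm $\sup_{k\geq N}\|(ZS_\alpha - S_\alpha Z)P_k\|$. Thus $ZS_\alpha - S_\alpha Z$ is the operator-norm limit of its finite-rank partial sums, hence compact; applying $q$ gives $u_\alpha z = z u_\alpha$.

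For the converse, $u_\alpha z = z u_\alpha$ for every $|\alpha|=p$ means each $ZS_\alpha - S_\alpha Z$ is compact. Compact operators satisfy $\|TP_k\|\to 0$ (as $P_k\to 0$ in SOT), so each block norm $\|(Z_{k+p}-I_p\otimes Z_k)S_\alpha P_k\|\to 0$. The main obstacle is that a single $\alpha$ only controls $Z_{k+p}-I_p\otimes Z_k$ on the subspace $e_\alpha\otimes(\mathbb{C}^d)^{\otimes k}$; to control the full operator norm I use that $\{e_\alpha:|\alpha|=p\}$ is an orthonormal basis of $(\mathbb{C}^d)^{\otimes p}$. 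Writing an arbitrary unit vector $v\in(\mathbb{C}^d)^{\otimes(k+p)}$ as $v=\sum_\alpha e_\alpha\otimes v_\alpha$ with $\sum\|v_\alpha\|^2 = \|v\|^2$, Cauchy--Schwarz yields
\[
\|(Z_{k+p}-I_p\otimes Z_k)v\|^2 \leq \Big(\sum_{|\alpha|=p}\|(Z_{k+p}-I_p\otimes Z_k)S_\alpha P_k\|^2\Big)\|v\|^2.
\]
Since the sum is finite (there are $d^p$ terms) and each summand tends to zero, $\|Z_{k+p}-I_p\otimes Z_k\|\to 0$, which is Condition A(p).

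For the final assertion about $u_\alpha^*$, I invoke the remark preceding the lemma: Condition A(p) for $Z$ implies the same condition for $Z^*$. Applying the equivalence just proved to $Z^*$ yields $u_\alpha z^* = z^* u_\alpha$ for every $|\alpha|=p$; taking adjoints gives $z u_\alpha^* = u_\alpha^* z$, as required.
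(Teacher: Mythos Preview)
Your argument is correct. The approach, however, differs from the paper's in one structural respect. The paper first multiplies $S_\alpha Z - Z S_\alpha$ on the right by $S_\alpha^*$ (using $S_\alpha^* S_\alpha = I$) and then \emph{sums over all $\alpha$ of length $p$} to obtain the block-diagonal operator $\sum_{|\alpha|=p} S_\alpha Z S_\alpha^* - Z$ (modulo a finite-rank correction), whose $(k+p)$-block is exactly $I_p\otimes Z_k - Z_{k+p}$; compactness of a block-diagonal operator is then equivalent to the block norms tending to zero, and no further assembly is needed. You instead keep the $\alpha$'s separate and work directly with the shifted block structure of $ZS_\alpha - S_\alpha Z$; this only controls $Z_{k+p}-I_p\otimes Z_k$ on the slice $e_\alpha\otimes(\mathbb{C}^d)^{\otimes k}$, so for the converse you must patch the slices together via the Cauchy--Schwarz estimate. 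Both routes are short; the paper's summation trick trades your final Cauchy--Schwarz step for the observation that $\sum_{|\alpha|=p} S_\alpha S_\alpha^*$ differs from $I$ by a finite-rank projection, while your route is a bit more elementary and makes the role of each individual $\alpha$ explicit. The treatment of the adjoint statement is identical in both proofs.
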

\begin{proof}
$u_{\alpha}z=zu_{\alpha}$ if and only if $S_{\alpha}Z-ZS_{\alpha}\in \calk$. This hold if and only if $S_{\alpha}ZS_{\alpha}^*-ZS_{\alpha}S_{\alpha}^*\in \calk$ (since $S_{\alpha}^*S_{\alpha}=I$). Thus, $u_{\alpha}z=zu_{\alpha}$ for every $|\alpha|=p$ if and only if $\Sigma_{|\alpha|=p} S_{\alpha}ZS_{\alpha}^*-\Sigma_{|\alpha|=p}ZS_{\alpha}S_{\alpha}^*\in \calk$. Since $I-\Sigma_{|\alpha|=p} S_{\alpha}S_{\alpha}^*=\Sigma_{i=0}^{p-1}P_i\in \calk$, this is satisfied if and only if $\Sigma_{|\alpha|=p} S_{\alpha}ZS_{\alpha}^*-Z\in \calk$. The last inclusion is equivalent to $||P_{k+p}(\Sigma_{|\alpha|=p} S_{\alpha}ZS_{\alpha}^*)P_{k+p}-P_{k+p}ZP_{k+p}||\rightarrow 0$ for $k\rightarrow \infty$. 
But $P_{k+p}ZP_{k+p}=Z_{k+p}$ and it is easy to check that $P_{k+p}(\Sigma_{|\alpha|=p} S_{\alpha}ZS_{\alpha}^*)P_{k+p}=I_p\otimes Z_k$ (simply apply it to $e_{j_1}\otimes \cdots \otimes e_{j_{k+p}}$). So this completes the proof.

The last statement follows from the fact that Condition A(p) also holds for $Z^*$ in place of $Z$.

\end{proof}

\begin{lem}\label{comm}
	Assuming Condition A(p), every element in $C^*(z)$ commutes with each $u_{\alpha}u_{\beta}^*$ for words $\alpha,\beta$ with $|\alpha|=|\beta|$.
\end{lem}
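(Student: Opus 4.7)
The plan is to leverage Lemma~\ref{uz} as a base case and bootstrap the commutation of $z$ with $u_\gamma$ from length-$p$ words to length-$(kp)$ words, then use the resolution of identity from Lemma~\ref{alpha}(2) to pad an arbitrary $u_\alpha u_\beta^*$ into a sum of ``good'' terms.

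First I would observe that if $|\gamma|$ is any multiple of $p$, say $|\gamma|=kp$, then both $u_\gamma$ and $u_\gamma^*$ commute with $z$. Indeed, write $\gamma$ as a concatenation $\gamma^{(1)}\cdots \gamma^{(k)}$ of $k$ words of length $p$, so $u_\gamma = u_{\gamma^{(1)}}\cdots u_{\gamma^{(k)}}$; each factor commutes with $z$ by Lemma~\ref{uz}, hence so does the product.

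Next, given words $\alpha,\beta$ with $|\alpha|=|\beta|=n$, choose $M\geq 0$ so that $n+M$ is a multiple of $p$. Inserting the identity $I=\sum_{|\gamma|=M} u_\gamma u_\gamma^*$ (Lemma~\ref{alpha}(2)) between $u_\alpha$ and $u_\beta^*$ gives the refinement
$$u_\alpha u_\beta^* = \sum_{|\gamma|=M} u_{\alpha\gamma} u_{\beta\gamma}^*.$$
Every concatenated word $\alpha\gamma$ and $\beta\gamma$ in this sum has length $n+M$, a multiple of $p$, so by the first step $z$ commutes with each $u_{\alpha\gamma}$ and each $u_{\beta\gamma}^*$. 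Therefore $z$ commutes with each summand and hence with $u_\alpha u_\beta^*$ itself. Applying the same argument with $z^*$ in place of $z$ (the assumption carries over to $Z^*$, as noted before Lemma~\ref{uz}), or simply taking adjoints of the commutation already established, shows that $u_\alpha u_\beta^*$ also commutes with $z^*$.

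Finally, the set $\{a:\,au_\alpha u_\beta^* = u_\alpha u_\beta^* a\}$ is a norm-closed subalgebra of $\calt/\calk$ containing both $z$ and $z^*$, so it contains every $*$-polynomial in $z$ and, by closure, all of $C^*(z)$. I do not anticipate any serious obstacle here; the only substantive idea is the choice of $M$ that lifts all word lengths appearing in the sum into the set of multiples of $p$, where Step~1 applies.
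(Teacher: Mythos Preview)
Your proof is correct and follows essentially the same route as the paper: insert the resolution of identity $\sum_{|\gamma|=M} u_\gamma u_\gamma^* = I$ between $u_\alpha$ and $u_\beta^*$ with $M$ chosen so that the concatenated words $\alpha\gamma$, $\beta\gamma$ have length a multiple of $p$, then use Lemma~\ref{uz} (iterated) to commute $z$ past each factor. The paper writes this as a single chain of equalities with $M=np-m$; your version just makes the bootstrapping of Lemma~\ref{uz} to multiples of $p$ more explicit.
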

\begin{proof} Suppose $|\alpha|=|\beta|=m$ and fix $n$ such that $m\leq np$.
	Since $\Sigma_{|\gamma|=np-m}u_{\gamma}u_{\gamma}^*=I$ and  $z$ is commutes with $u_{\delta}$ for $|\delta|=np$, we have
	$$u_{\alpha}u_{\beta}^*z=\Sigma_{|\gamma|=np-m} u_{\alpha}u_{\gamma}u_{\gamma}^*u_{\beta}^*z=\Sigma_{|\gamma|=np-m} u_{\alpha}u_{\gamma}zu_{\gamma}^*u_{\beta}^*=\Sigma_{|\gamma|=np-m} zu_{\alpha}u_{\gamma}u_{\gamma}^*u_{\beta}^*=zu_{\alpha}u_{\beta}^*. $$
	
	The same argument also works for $z^*$ and it follows that $u_{\alpha}u_{\beta}^*$ commutes with every operator in $C^*(z)$.	
	
\end{proof}

\begin{lem}\label{qDAp}
	If, for some $p>0$, condition A(p) holds then 
	$$q(\cald)=C^*(\{u_{\alpha}z^mu_{\beta}^*:\;|\alpha|=|\beta|,\; m\geq 0\}).$$	
\end{lem}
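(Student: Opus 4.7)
My plan is to reduce the claim to Lemma~\ref{qD}, which already gives
$$q(\cald)=C^*(\{u_{\alpha}z^mu_{\beta}^*,\; u_{\alpha}^*z^mu_{\beta}:\;|\alpha|=|\beta|,\; m\geq 0\}).$$
It therefore suffices to show that, under Condition A(p), every generator of the form $u_{\alpha}^*z^mu_{\beta}$ already lies in the smaller $C^*$-algebra $C^*(\{u_{\alpha}z^mu_{\beta}^*:\;|\alpha|=|\beta|,\; m\geq 0\})$; closing both sides under adjoints and multiplication then yields the desired equality.

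To do this I fix $\alpha,\beta$ with $|\alpha|=|\beta|=k$ and pick any integer $n\geq 1$ with $np\geq k$. Using the resolution of identity $\sum_{|\gamma|=np-k} u_\gamma u_\gamma^* = I$ from Lemma~\ref{alpha}(2), I insert it to the right of $u_\beta$:
$$u_\alpha^* z^m u_\beta \;=\; \sum_{|\gamma|=np-k} u_\alpha^* z^m u_{\beta\gamma} u_\gamma^*.$$
Because $|\beta\gamma|=np$ is a multiple of $p$, iterating Lemma~\ref{uz} yields that $u_{\beta\gamma}$ commutes with $z$, and hence with $z^m$. Pulling $u_{\beta\gamma}$ through $z^m$ and applying the orthogonality $u_\alpha^* u_\beta = \delta_{\alpha,\beta} I$ from Lemma~\ref{alpha}(1) collapses the sum to
$$u_\alpha^* z^m u_\beta \;=\; \delta_{\alpha,\beta}\sum_{|\gamma|=np-k} u_\gamma z^m u_\gamma^*,$$
which manifestly lies in $C^*(\{u_\alpha z^m u_\beta^*:\;|\alpha|=|\beta|,\; m\geq 0\})$.

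No step is genuinely hard: the only real choice is the integer $n$, and once $np\geq k$ the argument is just bookkeeping with the orthogonality relations for $\{u_\alpha\}$ together with the commutation furnished by Condition A(p). A striking by-product of the identity above is that $u_\alpha^* z^m u_\beta = 0$ in $q(\cald)$ whenever $\alpha\neq\beta$ are distinct words of equal length; this reflects the asymptotically block-diagonal structure of $Z$ forced by Condition A(p), and although it is not needed to prove the lemma itself, it should be useful in the later analysis of the fixed-point algebra $q(\cald)$ and its ideals.
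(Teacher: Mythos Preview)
Your proof is correct and follows essentially the same approach as the paper: reduce via Lemma~\ref{qD}, insert a resolution of the identity to create a word of length a multiple of $p$, commute with $z^m$ using Lemma~\ref{uz}, and collapse using Lemma~\ref{alpha}(1). The only cosmetic difference is that the paper first peels off the part of $\alpha,\beta$ whose length is a multiple of $p$ and then inserts the resolution on the left, whereas you handle all lengths at once by inserting on the right; both computations end at the same identity $u_\alpha^* z^m u_\beta = \delta_{\alpha,\beta}\sum_\gamma u_\gamma z^m u_\gamma^*$.
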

\begin{proof}
	We need to show that, for $|\alpha|=|\beta|$ and $m\geq 0$, $u_{\alpha}^*z^mu_{\beta}\in C^*(\{u_{\alpha}z^mu_{\beta}^*:\;|\alpha|=|\beta|,\; m\geq 0\})$. 
	First, write $\alpha=\alpha'\alpha''$ and $\beta=\beta'\beta''$ where $|\alpha'|=|\beta'|$ is a multiple of $p$ and $0\leq |\alpha''|=|\beta''|=k<p$. Then $u_{\alpha}^*z^mu_{\beta}=u_{\alpha''}^*u_{\alpha'}^*z^mu_{\beta'}u_{\beta''}=u_{\alpha''}^*z^mu_{\alpha'}^*u_{\beta'}u_{\beta''}$ which is $0$ if $\alpha'$ is different from $\beta'$ and $u_{\alpha''}^*z^mu_{\beta''}$ otherwise. Thus, we can assume that $|\alpha|=|\beta|=k<p$.
		Then, we write
		$$u_{\alpha}^*z^mu_{\beta}=\Sigma_{|\gamma|=p-k} u_{\gamma}u_{\gamma}^*u_{\alpha}^*z^mu_{\beta}=
\Sigma_{|\gamma|=p-k} u_{\gamma}(u_{\alpha}u_{\gamma})^*z^mu_{\beta}=\Sigma_{|\gamma|=p-k} u_{\gamma}z^m(u_{\alpha}u_{\gamma})^*u_{\beta}$$ which is $0$ if $\alpha$ is different from $\beta$ and $\Sigma u_{\gamma}z^mu_{\gamma}^*$ otherwise. In any case, we see that $u_{\alpha}^*z^mu_{\beta}$ lies in 
$C^*(\{u_{\alpha}z^mu_{\beta}^*:\;|\alpha|=|\beta|,\; m\geq 0\})$.
\end{proof}

We now write for $n\geq 0$,
\begin{equation}\label{Cn}
\mathcal{C}_n=C^*(\{u_{\alpha}z^lu_{\beta}^* :\; l\geq 0,\; np\leq |\alpha|=|\beta|<(n+1)p\}).
\end{equation} 
In particular,
 \begin{equation}\label{C0}
 \mathcal{C}_0=C^*(\{u_{\alpha}z^lu_{\beta}^* :\; l\geq 0,\; 0\leq |\alpha|=|\beta|<p\}).
 \end{equation}
\begin{lem}\label{maps}
	\begin{enumerate} Assuming Condition A(p) for some $p>0$, we have the following.
		\item[(1)] For every $n\geq 0$, $\mathcal{C}_n\subseteq \mathcal{C}_{n+1}$ . If $\iota_n$ is the inclusion map then $$\iota_n(u_{\alpha}z^lu_{\beta}^*)=\Sigma_{|\gamma|=p} u_{\alpha}u_{\gamma}z^lu_{\gamma}^*u_{\beta}^*.$$
		\item[(2)] For every $n\geq 0$, $M_{d^p}(\mathbb{C})\otimes \mathcal{C}_n$ is isomorphic to $\mathcal{C}_{n+1}$ via the map $g_n:M_{d^p}(\mathbb{C})\otimes \mathcal{C}_n\rightarrow \mathcal{C}_{n+1}$ defined by
		
		 $$g_n(e_{\gamma,\zeta}\otimes a)= u_{\gamma}au_{\zeta}^* $$ 
		 
		 where $a\in \mathcal{C}_n$ and $e_{\gamma,\zeta}$ is a matrix unit of $M_{d^p}(\mathbb{C})$ (where $|\gamma|=|\zeta|=p$).
\item[(3)] For every $n\geq 0$, $\mathcal{C}_n$ is isomorphic to $M_{d^{pn}}(\mathbb{C})\otimes \mathcal{C}_0$. The isomorphism is given by the map $\phi_n$ where
$$\phi_n(e_{\alpha,\beta}\otimes a)=u_{\alpha}au_{\beta}^*$$ for $a\in \mathcal{C}_0$ and $|\alpha|=|\beta|=np$. In fact,
$$\phi_n=g_{n-1}(I_{M_{d^p}}\otimes g_{n-2})\cdots (I_{M_{d^{p(n-1)}}}\otimes g_0).$$ 
\item[(4)] Write $\iota_n$ for the inclusion of $\mathcal{C}_n$ into $\mathcal{C}_{n+1}$ and $$\psi_n:=\phi_{n+1}^{-1}\circ \iota_n \circ \phi_n:M_{d^{pn}}(\mathbb{C})\otimes \mathcal{C}_0 \rightarrow M_{d^{p(n+1)}}(\mathbb{C})\otimes \mathcal{C}_0.$$
	Then $$\psi_n(e_{\alpha,\beta}\otimes u_{\tau}z^lu_{\sigma}^*)=\Sigma_{|\rho|=p-|\tau|} e_{\alpha \tau\rho,\beta \sigma\rho}\otimes \Sigma_{|\delta|=|\tau|}u_{\delta}z^l u_{\delta}^* .$$
\item[(5)] For every $n\geq 0$ and $1\leq i,j\leq d$,
$$u_i^*\mathcal{C}_nu_j\subseteq \mathcal{C}_n.$$		
	\end{enumerate}
\end{lem}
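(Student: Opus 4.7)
The plan is to handle the five parts in order, since each relies on the previous. For part~(1), I insert $I=\Sigma_{|\gamma|=p}u_{\gamma}u_{\gamma}^{*}$ (Lemma~\ref{alpha}(2)) between $u_{\alpha}$ and $z^{l}$ in the generator $u_{\alpha}z^{l}u_{\beta}^{*}$, then use Lemma~\ref{uz} to commute $u_{\gamma}^{*}$ past $z^{l}$. This yields $u_{\alpha}z^{l}u_{\beta}^{*}=\Sigma_{|\gamma|=p}u_{\alpha}u_{\gamma}z^{l}u_{\gamma}^{*}u_{\beta}^{*}$; since each $|\alpha\gamma|=|\alpha|+p$ now lies in $[(n+1)p,(n+2)p)$, every summand is a generator of $\mathcal{C}_{n+1}$, establishing both the inclusion $\mathcal{C}_n\subseteq\mathcal{C}_{n+1}$ and the formula for $\iota_n$ in one stroke.

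For parts~(2) and~(3), the key observation is that, by Lemma~\ref{alpha}(1), the family $\{u_{\gamma}u_{\zeta}^{*} : |\gamma|=|\zeta|=p\}$ obeys the matrix unit relations $(u_{\gamma}u_{\zeta}^{*})(u_{\gamma'}u_{\zeta'}^{*})=\delta_{\zeta,\gamma'}u_{\gamma}u_{\zeta'}^{*}$. This reduces the check that $g_n$ is a $*$-homomorphism to a routine computation on elementary tensors. Surjectivity follows by splitting any generator $u_{\alpha'}z^{l}u_{\beta'}^{*}$ of $\mathcal{C}_{n+1}$ as $u_{\gamma}(u_{\alpha}z^{l}u_{\beta}^{*})u_{\zeta}^{*}$ with $|\gamma|=|\zeta|=p$ and $u_{\alpha}z^{l}u_{\beta}^{*}\in\mathcal{C}_n$; injectivity follows by sandwiching with $u_{\gamma_0}^{*}(\cdot)u_{\zeta_0}$ to recover each component of the preimage, using $u_{\gamma_0}^{*}u_{\gamma}=\delta_{\gamma_0,\gamma}I$. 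Part~(3) is then immediate by induction on $n$, using~(2) and the identification $M_{d^p}\otimes M_{d^{p(n-1)}}\cong M_{d^{pn}}$ via $e_{\gamma,\zeta}\otimes e_{\alpha,\beta}\mapsto e_{\gamma\alpha,\zeta\beta}$.

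Part~(4) is a direct composition. Starting from $\phi_n(e_{\alpha,\beta}\otimes u_{\tau}z^{l}u_{\sigma}^{*})=u_{\alpha\tau}z^{l}u_{\beta\sigma}^{*}$ and applying part~(1), I obtain $\Sigma_{|\gamma|=p}u_{\alpha\tau\gamma}z^{l}u_{\beta\sigma\gamma}^{*}$. To invert $\phi_{n+1}$, I split $\gamma=\gamma_1\gamma_2$ with $|\gamma_1|=p-|\tau|$ and $|\gamma_2|=|\tau|$, so that the first $(n+1)p$ letters of $\alpha\tau\gamma$ form $\alpha\tau\gamma_1$ and the trailing $|\tau|$ letters form $\gamma_2$; each summand becomes $\phi_{n+1}(e_{\alpha\tau\gamma_1,\beta\sigma\gamma_1}\otimes u_{\gamma_2}z^{l}u_{\gamma_2}^{*})$. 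Relabeling $\gamma_1\to\rho$, $\gamma_2\to\delta$, and using distributivity of tensor products over the resulting double sum yields the stated formula.

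The main obstacle is part~(5), where the image can drop into $\mathcal{C}_{n-1}$ at the boundary. Since $\mathcal{C}_n$ is closed and $c\mapsto u_i^{*}cu_j$ is continuous and linear, I reduce to checking the inclusion on monomials in the generators; products are broken apart by inserting $I=\Sigma_k u_k u_k^{*}$ between consecutive factors, so the real content is the single-generator case. For $g=u_{\alpha}z^{l}u_{\beta}^{*}$ with $|\alpha|=|\beta|=m\geq 1$, Lemma~\ref{alpha}(1) gives $u_i^{*}gu_j=\delta_{i,\alpha_1}\delta_{j,\beta_1}u_{\alpha'}z^{l}u_{\tilde{\beta}}^{*}$ with $|\alpha'|=|\tilde{\beta}|=m-1$; this lies in $\mathcal{C}_n$ when $m>np$, and in $\mathcal{C}_{n-1}\subseteq\mathcal{C}_n$ (by part~(1)) when $m=np$ with $n\geq 1$. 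The remaining case $m=0$ (forcing $n=0$ and $g=z^{l}$) requires a separate treatment: inserting $I=\Sigma_{|\gamma|=p}u_{\gamma}u_{\gamma}^{*}$ between $u_i^{*}$ and $z^{l}$, commuting $u_{\gamma}^{*}$ past $z^{l}$ via Lemma~\ref{uz}, and simplifying with Lemma~\ref{alpha}(1) produces $\delta_{i,j}\Sigma_{|\alpha'|=p-1}u_{\alpha'}z^{l}u_{\alpha'}^{*}\in\mathcal{C}_0$. Getting the boundary cases to land inside $\mathcal{C}_n$ rather than a larger ambient algebra is the delicate point, and it is where the telescoping inclusion from~(1) becomes indispensable.
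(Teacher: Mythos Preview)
Your proof is correct and follows essentially the same approach as the paper's: part~(1) via inserting $\Sigma_{|\gamma|=p}u_\gamma u_\gamma^*$ and commuting past $z^l$, part~(2) via the matrix-unit relations for $u_\gamma u_\zeta^*$ with surjectivity by splitting off the first $p$ letters and injectivity by sandwiching, part~(4) by splitting $\gamma=\rho\delta$ with $|\rho|=p-|\tau|$, and part~(5) by the same three-case analysis ($m>np$, $m=np$, $m=0$). The only notable difference is that you are more explicit about the reduction to single generators in part~(5) (inserting $\Sigma_k u_k u_k^*$ between factors of a monomial), which the paper states without justification; your insertion point in the $m=0$ case ($\Sigma_{|\gamma|=p}$ between $u_i^*$ and $z^l$) differs cosmetically from the paper's ($\Sigma_{|\gamma|=p-1}$ to the left of $u_i^*$), but both yield $\delta_{i,j}\Sigma_{|\gamma|=p-1}u_\gamma z^l u_\gamma^*$.
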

\begin{proof}
To prove (1), take $u_{\alpha}z^lu_{\beta}^*\in \mathcal{C}_n$ (with $np\leq |\alpha|=|\beta|	<(n+1)p$) and compute
$$u_{\alpha}z^lu_{\beta}^*=\Sigma_{|\gamma|=p} u_{\alpha}u_{\gamma}u_{\gamma}^*z^lu_{\beta}^*=\Sigma_{|\gamma|=p} u_{\alpha}u_{\gamma}z^lu_{\gamma}^*u_{\beta}^*\in \mathcal{C}_{n+1}$$ since $(n+1)p\leq|\alpha \gamma|=|\beta \gamma|< (n+2)p$.

To prove (2), define $g_n$ as the linear map defined by
 $$g_n(e_{\gamma,\zeta}\otimes a)= u_{\gamma}au_{\zeta}^*\;\;\;,a\in \mathcal{C}_n,\; |\gamma|=|\zeta|=p. $$  
 It is straightforward to check  that it's range is contained in $\mathcal{C}_{n+1}$. It is also clear that this map is self adjoint. To prove that it is multiplicative, compute $(e_{\gamma_1,\zeta_1}\otimes a)(e_{\gamma_2,\zeta_2}\otimes b)=\delta_{\zeta_1,\gamma_2} e_{\gamma_1,\zeta_2}\otimes ab$ and $g_n(e_{\gamma_1,\zeta_1}\otimes a)g_n(e_{\gamma_2,\zeta_2}\otimes b)
=u_{\gamma_1}au_{\zeta_1}^*u_{\gamma_2}bu_{\zeta_2}^*=\delta_{\zeta_1,\gamma_2}u_{\gamma_1}abu_{\zeta_2}^*=g_n(\delta_{\zeta_1,\gamma_2} e_{\gamma_1,\zeta_2}\otimes ab)$. To prove surjectivity, fix $u_{\alpha}z^lu_{\beta}^*\in \mathcal{C}_{n+1}$ (with $(n+1)p\leq |\alpha|=|\beta|<(n+2)p$). 
Write $\alpha=\alpha'\alpha''$ and $\beta=\beta'\beta''$ with $|\alpha'|=|\beta'|=p$ and $|\alpha''|=|\beta''|=|\alpha|-p$. Then $u_{\alpha}z^lu_{\beta}^*=u_{\alpha'}(u_{\alpha''}z^lu_{\beta''}^*)u_{\beta'}^*=g_n(e_{\alpha',\beta'}\otimes u_{\alpha''}z^lu_{\beta''}^*)$. Thus $g_n$ is surjective.

To prove injectivity, assume that $0=g_n(\Sigma e_{\alpha,\beta}\otimes a_{\alpha,\beta})=\Sigma u_{\alpha}a_{\alpha,\beta}u_{\beta}^*$. Thus $$\Sigma u_{\alpha}a_{\alpha,\beta}u_{\beta}^*=0.$$ Fix $\alpha_0$ and $\beta_0$ with $|\alpha_0|=|\beta_0|=p$ and multiply this equation on the left by $u_{\alpha_0}^*$ and, on the right, by $u_{\beta_0}$. We get $a_{\alpha_0,\beta_0}=0$ for every $\alpha_0$ and $\beta_0$.

This proves (2) and (3) follows from this.

To prove (4) we have to trace the definitions of the maps involved. From the proof of (1) we get
$$\iota_n(u_{\alpha}z^lu_{\beta}^*)=\Sigma_{|\gamma|=p} u_{\alpha}u_{\gamma}z^lu_{\gamma}^*u_{\beta}^*.$$
Thus $$\psi_n(e_{\alpha,\beta}\otimes u_{\tau}z^lu_{\sigma}^*)=\phi_{n+1}^{-1}(\iota_n(u_{\alpha}u_{\tau}z^lu_{\sigma}^*u_{\beta}^*))=\phi_{n+1}^{-1}(\Sigma_{|\gamma|=p}u_{\alpha}u_{\tau}u_{\gamma}z^lu_{\gamma}^*u_{\sigma}^*u_{\beta}^*).
$$ Now, for every $\gamma$ with $|\gamma|=p$ we write $\gamma=\rho \delta$ with $|\rho|=p-|\tau|$ and $|\delta|=|\tau|$. Then we get 
$$\psi_n(e_{\alpha,\beta}\otimes u_{\tau}z^lu_{\sigma}^*)=\phi_{n+1}^{-1}(\Sigma_{|\rho|=p-|\tau|, |\delta|=|\tau|}u_{\alpha}u_{\tau}u_{\rho}u_{\delta}z^lu_{\delta}^*u_{\rho}^*u_{\sigma}^*u_{\beta}^*)$$  $$=\Sigma_{|\rho|=p-|\tau|}e_{\alpha \tau \rho,\beta \sigma \rho}\otimes \Sigma_{|\delta|=|\tau|}u_{\delta}z^lu_{\delta}^* .$$

For (5), it is enough to take $a\in \mathcal{C}_n$ to be a generator $u_{\alpha}z^lu_{\beta}^*$ with $np\leq |\alpha|=|\beta|< (n+1)p$. Assume first that $|\alpha|>np$. Then we can write $\alpha=k\alpha'$ and $\beta=l\beta'$ for some $k,l$ and $np\leq |\alpha'|=|\beta'|< (n+1)p$, Then $u_i^*u_{\alpha}z^lu_{\beta}^*u_j=\delta_{i,k}\delta_{j,l}u_{\alpha'}z^lu_{\beta'}^*\in \mathcal{C}_n$. Now assume that $|\alpha|=|\beta|=np$ and $n\neq 0$. Then, a similar argument shows that $u_i^*u_{\alpha}z^lu_{\beta}^*u_j=\delta_{i,k}\delta_{j,l}u_{\alpha'}z^lu_{\beta'}^*$ and $|\alpha'|=|\beta'|=np-1<np$. Thus, in this case $u_i^*u_{\alpha}z^lu_{\beta}^*u_j\in \mathcal{C}_{n-1}\subseteq \mathcal{C}_n$ (where we used part (1)). To complete the proof we have to show that $u_i^*z^lu_j\in \mathcal{C}_0$. For that, we write $u_i^*z^lu_j=\Sigma_{|\gamma|=p-1}u_{\gamma}u_{\gamma}^*u_i^*z^lu_j=\Sigma_{|\gamma|=p-1}u_{\gamma}z^lu_{\gamma}^*u_i^*u_j=\delta_{i,j}\Sigma_{|\gamma|=p-1}u_{\gamma}z^lu_{\gamma}^*\in \mathcal{C}_0$.	
	
\end{proof}

Since $q(\mathcal{D})=\overline{\cup \mathcal{C}_n}=\lim_{n \rightarrow \infty} (\mathcal{C}_n,\iota_n)$, we have

\begin{cor}\label{dirlim}
	 Assuming Condition A(p),
$$q(\mathcal{D})\cong \lim_{n \rightarrow \infty} (M_{d^{np}}\otimes \mathcal{C}_0, \psi_n) .$$	
\end{cor}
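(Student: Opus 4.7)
The plan is to read off the corollary as an immediate consequence of the work already done in Lemma~\ref{qDAp} and Lemma~\ref{maps}. The two ingredients I need are (i) that $q(\cald) = \overline{\bigcup_n \calc_n}$, so that $q(\cald)$ is the $C^*$-inductive limit of the chain $(\calc_n, \iota_n)$, and (ii) that the isomorphisms $\phi_n : M_{d^{np}} \otimes \calc_0 \to \calc_n$ of Lemma~\ref{maps}(3) intertwine the connecting maps $\iota_n$ and $\psi_n$, which by the very definition $\psi_n := \phi_{n+1}^{-1}\circ \iota_n \circ \phi_n$ in Lemma~\ref{maps}(4) is tautological. So the whole argument amounts to step (i).

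For step (i), I would proceed as follows. By Lemma~\ref{qDAp}, the algebra $q(\cald)$ is generated as a $C^*$-algebra by the elements $u_\alpha z^m u_\beta^*$ with $|\alpha| = |\beta| = k$ and $m \geq 0$. Given such a generator, set $n = \lfloor k/p \rfloor$, so that $np \leq k < (n+1)p$; then $u_\alpha z^m u_\beta^* \in \calc_n$ by the defining formula~(\ref{Cn}). Hence every generator lies in $\bigcup_n \calc_n$, and so this union is dense in $q(\cald)$. Combined with Lemma~\ref{maps}(1), which gives $\calc_n \subseteq \calc_{n+1}$ with inclusion map $\iota_n$, this identifies $q(\cald)$ with the $C^*$-inductive limit $\lim_{n\to\infty}(\calc_n, \iota_n)$.

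To finish, I use the commutative diagram
\[
\begin{array}{ccc}
M_{d^{np}}\otimes \calc_0 & \xrightarrow{\;\psi_n\;} & M_{d^{(n+1)p}}\otimes \calc_0 \\
{\scriptstyle \phi_n}\downarrow\;\;\; & & \;\;\;\downarrow{\scriptstyle \phi_{n+1}} \\
\calc_n & \xrightarrow{\;\iota_n\;} & \calc_{n+1}
\end{array}
\]
which commutes by definition of $\psi_n$, together with the fact that each $\phi_n$ is a $*$-isomorphism (Lemma~\ref{maps}(3)). The universal property of the inductive limit then yields a $*$-isomorphism
\[
\lim_{n\to\infty}(M_{d^{np}}\otimes \calc_0, \psi_n) \;\cong\; \lim_{n\to\infty}(\calc_n, \iota_n) \;=\; q(\cald).
\]

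There is no real obstacle here; the substantive work has already been done in Lemma~\ref{maps}, and the only thing to check is that the indexing in~(\ref{Cn}) really does exhaust every generator produced by Lemma~\ref{qDAp}, which is the elementary observation that any word length $k$ falls into exactly one interval $[np,(n+1)p)$.
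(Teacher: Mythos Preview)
Your argument is correct and follows exactly the paper's approach: the paper simply records the one line ``Since $q(\mathcal{D})=\overline{\cup \mathcal{C}_n}=\lim_{n \rightarrow \infty} (\mathcal{C}_n,\iota_n)$, we have'' before stating the corollary, and you have filled in precisely the details behind that sentence (the density via Lemma~\ref{qDAp} and~(\ref{Cn}), and the intertwining diagram coming from the definition of $\psi_n$).
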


Since the case  $p=1$ (so that Condition A(1) holds) is simpler, we first take care of this case.

\textbf{Case I : $p=1$}

If $p=1$, $\mathcal{C}_0=C^*(z)$ and  $$\psi_n:M_{d^{n}}(\mathbb{C})\otimes \mathcal{C}_0 \rightarrow M_{d^{(n+1)}}(\mathbb{C})\otimes \mathcal{C}_0$$ is defined by
 $$\psi_n(e_{\alpha,\beta}\otimes z^l)=\Sigma_{i} e_{\alpha i,\beta i}\otimes z^l  .$$
 
 Thus, we have the following.
 
 \begin{cor}\label{A1}
 	 Assuming Condition A(1),
 	$$q(\mathcal{D})\cong M_{d^{\infty}}\otimes C^*(z) $$
 	where $M_{d^{\infty}}$ is the UHF algebra obtained as a direct limit of the algebras $M_{d^n}(\mathbb{C})$.
 \end{cor}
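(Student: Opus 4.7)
The plan is to apply Corollary~\ref{dirlim} with $p=1$ and identify the resulting direct system as a standard UHF system tensored with $C^*(z)$.

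First, I would unpack $\mathcal{C}_0$ when $p=1$. Looking at the definition in (\ref{C0}), the only words $\alpha,\beta$ with $|\alpha|=|\beta|<1$ are the empty word, and $u_\emptyset = I$ by the notational convention in the introduction. Hence $\mathcal{C}_0 = C^*(\{z^l : l \geq 0\}) = C^*(z)$, which matches the formula written just before the statement.

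Second, I would interpret the connecting maps $\psi_n$ as tensor products. Setting $p=1$ in the formula of Lemma~\ref{maps}(4) forces $\tau=\sigma=\emptyset$ and $|\rho|=1$, $|\delta|=0$, so
$$\psi_n(e_{\alpha,\beta}\otimes z^l) = \sum_{i=1}^{d} e_{\alpha i,\beta i}\otimes z^l = (\jmath_n \otimes \mathrm{id}_{C^*(z)})(e_{\alpha,\beta}\otimes z^l),$$
where $\jmath_n:M_{d^n}(\mathbb{C})\to M_{d^{n+1}}(\mathbb{C})$ is the standard unital embedding $e_{\alpha,\beta}\mapsto \sum_i e_{\alpha i,\beta i}$. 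Since $M_{d^n}(\mathbb{C})$ is finite-dimensional, hence nuclear, the $C^*$-tensor product $M_{d^n}(\mathbb{C})\otimes C^*(z)$ is unambiguous, and each $\psi_n$ is indeed the tensor product map $\jmath_n \otimes \mathrm{id}_{C^*(z)}$.

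Third, I would invoke two standard facts: $\lim_{n\to\infty}(M_{d^n}(\mathbb{C}),\jmath_n)$ is by definition the UHF algebra $M_{d^\infty}$, and for finite-dimensional (more generally, nuclear) algebras the $C^*$-direct limit commutes with the minimal tensor product, giving
$$\lim_{n\to\infty}\bigl(M_{d^n}(\mathbb{C})\otimes C^*(z),\; \jmath_n\otimes \mathrm{id}\bigr) \;\cong\; \Bigl(\lim_{n\to\infty}(M_{d^n}(\mathbb{C}),\jmath_n)\Bigr)\otimes C^*(z) \;=\; M_{d^\infty}\otimes C^*(z).$$
Combined with Corollary~\ref{dirlim}, this yields $q(\mathcal{D})\cong M_{d^\infty}\otimes C^*(z)$.

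There is really no substantive obstacle here: the corollary is an immediate specialization of the preceding direct-limit description, and the only thing to verify carefully is that the connecting map $\psi_n$ from Lemma~\ref{maps}(4) collapses to $\jmath_n\otimes \mathrm{id}$ when $p=1$, which is a direct inspection of the formula.
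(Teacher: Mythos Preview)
Your proposal is correct and follows exactly the same approach as the paper: the paper simply observes that when $p=1$ one has $\mathcal{C}_0=C^*(z)$ and $\psi_n(e_{\alpha,\beta}\otimes z^l)=\sum_i e_{\alpha i,\beta i}\otimes z^l$, then states the corollary. Your write-up is in fact more careful than the paper's, explicitly identifying $\psi_n=\jmath_n\otimes\mathrm{id}$ and justifying the commutation of direct limits with the tensor product via nuclearity.
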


It follows that, using this isomorphisms, we can write every closed ideal in $q(\cald)$ as
$M_{d^{\infty}}\otimes J_0$ for some closed ideal $J_0\subseteq C^*(z)$.


\begin{lem}\label{simplicityA1}
	 Assuming Condition A(1), every ideal $J$ in $q(\mathcal{D})$ is invariant in the sense of Definition~\ref{idinv}. That is,
	for every ideal $J\subseteq q(\cald)$ and every $1\leq i,j\leq d$, $u_i^*Ju_j\subseteq J$. In fact, $$q(F)^*Jq(F)\subseteq J.$$
	Thus, in this case, $q(\mathcal{D})$ has no \emph{non trivial} invariant ideals if and only if it has no non trivial ideals and that is the case if and only if $C^*(z)$ has no non trivial ideals. 
	
	If $z$ is a normal operator, this happens if and only if 
	$z\in \mathbb{C}I$ which is the case if and only if $Z_k\rightarrow \lambda I$ for some $\lambda \in \mathbb{C}$ (and $\lambda \neq 0$ by our standing assumptions on $\{Z_k\}$).
\end{lem}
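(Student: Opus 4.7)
The plan is to exploit Corollary~\ref{A1}, which identifies $q(\cald)$ with $M_{d^{\infty}}\otimes C^*(z)$, and to show that the maps $\tau_{ij}(x):=u_i^*xu_j$ act as $(\cdot)\otimes \mathrm{id}_{C^*(z)}$ under this decomposition. Once this is in hand, invariance will be immediate: since $M_{d^{\infty}}$ is simple and nuclear, every closed two-sided ideal of $M_{d^{\infty}}\otimes C^*(z)$ has the form $M_{d^{\infty}}\otimes J_0$ for a closed ideal $J_0\subseteq C^*(z)$, and any map of the form $T\otimes \mathrm{id}_{C^*(z)}$ sends such an ideal back into itself.

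The bulk of the work is at the finite stage. Assuming Condition A(1), Lemma~\ref{uz} gives $u_\ell z=zu_\ell$ for every letter $\ell$, so $z$ and every element of $C^*(z)$ commutes with every $u_\alpha$. For a generator $u_\alpha z^l u_\beta^*$ of $\mathcal{C}_n$ with $|\alpha|=|\beta|=n\geq 1$, splitting off the first letters $\alpha_1$ of $\alpha$ and $\beta_1$ of $\beta$ and using Lemma~\ref{alpha}(1) at length one yields
\[ u_i^*\bigl(u_\alpha z^l u_\beta^*\bigr)u_j = \delta_{i,\alpha_1}\delta_{j,\beta_1}\,u_{\alpha^\flat}z^l u_{\beta^\flat}^*, \]
with $\tau_{ij}(z^l)=\delta_{ij}z^l$ in the case $n=0$. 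Under the isomorphism $\phi_n$ of Lemma~\ref{maps}(3) this reads $\tau_{ij}|_{\mathcal{C}_n}=T^{(n)}_{ij}\otimes \mathrm{id}_{C^*(z)}$, where $T^{(n)}_{ij}:M_{d^n}\to M_{d^{n-1}}\subseteq M_{d^n}$ is the linear map $e_{\alpha,\beta}\mapsto \delta_{i,\alpha_1}\delta_{j,\beta_1}e_{\alpha^\flat,\beta^\flat}$. By norm-continuity of $\tau_{ij}$ and density of $\bigcup_n\mathcal{C}_n$ in $q(\cald)$, one concludes $u_i^*Ju_j\subseteq J$; combined with $q(F)=\sum_i u_iq(\cald)$ (Lemma~\ref{qF}) and the ideal property of $J$, this gives $q(F)^*Jq(F)\subseteq J$.

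Given invariance of every ideal, the first equivalence is tautological, and the second is the ideal correspondence for $M_{d^{\infty}}\otimes C^*(z)$. For the normal case, $C^*(z)\cong C(\sigma(z))$ is simple iff $\sigma(z)=\{\lambda\}$, i.e.\ $z=\lambda I$. Unpacking $z=\lambda I$ through $q$ gives $Z-\lambda I\in\calk$; since $Z-\lambda I=\bigoplus_k(Z_k-\lambda I)$ is block-diagonal on $\calf(\cc^d)$, this is equivalent to $\|Z_k-\lambda I\|\to 0$, with $\lambda\neq 0$ forced by the standing lower bound $|Z_k|\geq \epsilon I$. The main check is to upgrade the factorization $\tau_{ij}=T^{(n)}_{ij}\otimes\mathrm{id}_{C^*(z)}$ from generators to all of $\mathcal{C}_n$, so that the invariance conclusion extends from simple tensors to arbitrary elements of $M_{d^n}\otimes J_0$; once that bookkeeping is in place, the rest of the argument is routine.
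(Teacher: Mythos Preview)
Your proof is correct and follows essentially the same approach as the paper: both compute $u_i^*(u_\alpha c\, u_\beta^*)u_j$ on generators using that $c\in C^*(z)$ commutes with each $u_\ell$ under Condition~A(1), and observe the result lands back in the ideal. Your framing via the tensor factorization $\tau_{ij}=T^{(n)}_{ij}\otimes\mathrm{id}_{C^*(z)}$ is a bit more structural than the paper's direct verification, and your treatment of the equivalence $z=\lambda I\iff Z_k\to\lambda I$ is in fact more detailed than what the paper provides.
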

\begin{proof} Let $J$ be an ideal in $q(\cald)$.
We shall show that, for every $i,j$, $u_i^*Ju_j\subseteq J$. So, we fix $J$ and $i,j$. There is an ideal $J_0\subseteq C^*(z)$
such that $J$ is generated by elements of the form $a=u_{\alpha}u_{\beta}^*c$ where $|\alpha|=|\beta|$ and $c\in C^*(z)$. 
If $|\alpha|=|\beta|=0$ then $a=c$ and $u_i^*au_j=u_i^*cu_j=u_i^*u_jc=\delta_{i,j}c\in J$.
Suppose now that $0<|\alpha|=|\beta|$ and 
  consider $u_i^*(u_{\alpha}u_{\beta}^*c)u_j$. This would be different from $0$ only if $\alpha=i \alpha_0$ and $\beta=j \beta_0$ for some words $\alpha_0, \beta_0$. In this case,
$$u_i^*(u_{\alpha}u_{\beta}^*c)u_j=u_{\alpha_0}u_{\beta_0}^*c \in J$$
completing the proof of invariance of $J$. For the last statement of the lemma it is clear that, if $z$ is a normal operator and $C^*(z)$ has no closed non trivial ideals then $sp(z)$ should be a single point.

\end{proof}

\textbf{Case II: $p>1$}

For $p>1$, we cannot write $\psi_n=t_n\otimes id_{\mathcal{C}_0}$ where $t_n:M_{d^{pn}}\rightarrow M_{d^{p(n+1)}}$ and, thus, we cannot conclude that $q(\mathcal{D})\cong M_{d^{\infty}}\otimes \mathcal{C}_0$. We will have to look closer at the structure of $\mathcal{C}_0$.

\begin{defn}
	A family $\{t_{\alpha,\beta}\}_{|\alpha|=|\beta|=n}$ in a unital $C^*$-algebra is called a family of $M_{d^n}$-matrix units if, for every $|\alpha|=|\beta|=|\sigma|=|\tau|=n$, $t_{\alpha,\beta}t_{\sigma,\tau}=\delta_{\beta,\sigma} t_{\alpha,\tau}$, $t_{\alpha,\beta}^*=t_{\beta,\alpha}$ and $\Sigma_{|\gamma|=n}t_{\gamma,\gamma}=I$.
\end{defn}

Clearly, the family $\{u_{\alpha}u_{\beta}^*:\; |\alpha|=|\beta|=n\}$ is a family of $M_{d^n}$-matrix units. The proof of the following lemma is straightforward and is omitted.

We write $\xi_m$ for the word of length $m$, $\xi_m=11\ldots1$ ($m$ times).

\begin{lem}\label{M2C}
	If $\mathcal{A}$ is a unital $C^*$-algebra that contains a family of $M_{d^n}$-matrix units then $\mathcal{A}$ is isomorphic to $M_{d^n}\otimes (t_{\xi_n,\xi_n}\mathcal{A}t_{\xi_n,\xi_n})$ where the isomorphism
	$\tau:\mathcal{A}\rightarrow M_{d^n}\otimes(t_{\xi_n,\xi_n}\mathcal{A}t_{\xi_n,\xi_n})$ is given by 
	$\tau(a)=\Sigma_{|\alpha|=|\beta|=n} e_{\alpha,\beta}\otimes t_{\xi_n,\alpha}at_{\beta,\xi_n}$. Also, $\tau^{-1}(e_{\alpha,\beta}\otimes a)=t_{\alpha,\xi_n}at_{\xi_n,\beta}$.
\end{lem}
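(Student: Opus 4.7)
The plan is to prove this by writing down the inverse map explicitly and checking that $\tau$ and the candidate $\tau^{-1}$ are mutually inverse $*$-homomorphisms. Write $B := t_{\xi_n,\xi_n}\mathcal{A}t_{\xi_n,\xi_n}$ for the corner. The first small check is that $\tau$ actually lands in $M_{d^n}\otimes B$: using $t_{\xi_n,\xi_n}t_{\xi_n,\alpha}=t_{\xi_n,\alpha}$ and $t_{\beta,\xi_n}t_{\xi_n,\xi_n}=t_{\beta,\xi_n}$ (from the matrix-unit relations), each component $t_{\xi_n,\alpha}at_{\beta,\xi_n}$ is compressed by $t_{\xi_n,\xi_n}$ on both sides, hence lies in $B$. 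Self-adjointness of $\tau$ follows from $t_{\alpha,\beta}^{*}=t_{\beta,\alpha}$ together with a relabelling $\alpha\leftrightarrow \beta$.

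The main computation is multiplicativity. Expanding,
\begin{align*}
\tau(a)\tau(b) &= \sum_{\alpha,\beta,\sigma,\tau} e_{\alpha,\beta}e_{\sigma,\tau}\otimes t_{\xi_n,\alpha}a\,t_{\beta,\xi_n}t_{\xi_n,\sigma}b\,t_{\tau,\xi_n} \\
&= \sum_{\alpha,\beta,\tau} e_{\alpha,\tau}\otimes t_{\xi_n,\alpha}a\,t_{\beta,\beta}\,b\,t_{\tau,\xi_n},
\end{align*}
using $t_{\beta,\xi_n}t_{\xi_n,\sigma}=\delta_{\beta,\sigma}t_{\beta,\beta}$ (and hence $\sigma=\beta$). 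Summing over $\beta$ and invoking $\sum_{|\beta|=n}t_{\beta,\beta}=I$ collapses the middle factor to $ab$, giving $\tau(ab)$. Thus $\tau$ is a $*$-homomorphism.

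For bijectivity, define $\tau^{-1}(e_{\alpha,\beta}\otimes a)=t_{\alpha,\xi_n}at_{\xi_n,\beta}$ for $a\in B$, extended linearly. To check $\tau\circ\tau^{-1}=\mathrm{id}$ on simple tensors, compute
$$\tau(t_{\alpha,\xi_n}at_{\xi_n,\beta}) = \sum_{\sigma,\tau} e_{\sigma,\tau}\otimes t_{\xi_n,\sigma}t_{\alpha,\xi_n}a\,t_{\xi_n,\beta}t_{\tau,\xi_n},$$
and use $t_{\xi_n,\sigma}t_{\alpha,\xi_n}=\delta_{\sigma,\alpha}t_{\xi_n,\xi_n}$ and $t_{\xi_n,\beta}t_{\tau,\xi_n}=\delta_{\beta,\tau}t_{\xi_n,\xi_n}$ together with $a=t_{\xi_n,\xi_n}at_{\xi_n,\xi_n}$ (since $a\in B$) to recover $e_{\alpha,\beta}\otimes a$. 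Conversely,
$$\tau^{-1}(\tau(a)) = \sum_{\alpha,\beta} t_{\alpha,\xi_n}t_{\xi_n,\alpha}a\,t_{\beta,\xi_n}t_{\xi_n,\beta} = \sum_{\alpha,\beta} t_{\alpha,\alpha}a\,t_{\beta,\beta} = IaI = a,$$
using $\sum_{|\alpha|=n}t_{\alpha,\alpha}=I$ twice.

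There is no real obstacle here — the argument is entirely formal manipulation with matrix-unit relations and the two partitions of unity $\sum t_{\gamma,\gamma}=I$. The only thing to be careful about is maintaining the distinction between the word index $\xi_n$ and the summation indices; once the identities $t_{\gamma,\delta}t_{\delta',\gamma'}=\delta_{\delta,\delta'}t_{\gamma,\gamma'}$ are applied consistently, all cross terms collapse and the two formulas $\tau$ and $\tau^{-1}$ drop out as mutual inverses, so $\tau$ is a $*$-isomorphism.
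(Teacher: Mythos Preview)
Your argument is correct and complete; the paper itself omits the proof, calling it straightforward. Two minor remarks: the parenthetical justification ``using $t_{\beta,\xi_n}t_{\xi_n,\sigma}=\delta_{\beta,\sigma}t_{\beta,\beta}$'' is misstated---the matrix-unit relations give $t_{\beta,\xi_n}t_{\xi_n,\sigma}=t_{\beta,\sigma}$, and the Kronecker delta forcing $\sigma=\beta$ actually comes from $e_{\alpha,\beta}e_{\sigma,\tau}=\delta_{\beta,\sigma}e_{\alpha,\tau}$ (your displayed equations are nonetheless correct); and you have used $\tau$ both for the isomorphism and for a summation index, which you may want to rename.
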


For simplicity, we write $q=p-1$.

In our notation here, we write $\mathcal{C}_{00}=u_{\xi_q}u_{\xi_q}^*\mathcal{C}_{0}u_{\xi_q}u_{\xi_q}^*$ and then, by the lemma (using $t_{\alpha,\beta}=u_{\alpha}u_{\beta}^*$), we get 

$$\mathcal{C}_0\cong M_{d^q}\otimes\mathcal{C}_{00}$$ 
where the isomorphism $\tau:\mathcal{C}_0\rightarrow M_{d^q}\otimes \mathcal{C}_{00}$ is

$$\tau(a)=\Sigma_{|\alpha|=|\beta|=q} e_{\alpha,\beta}\otimes u_{\xi_q}u_{\alpha}^*au_{\beta}u_{\xi_q}^*$$ for $a\in \mathcal{C}_0$ and
 
$$\tau^{-1}(e_{\alpha,\beta}\otimes b)=u_{\alpha}u_{\xi_q}^*bu_{\xi_q}u_{\beta}^*$$ for $b\in \mathcal{C}_{00}$.

Using (\ref{C0}), we get

\begin{lem}\label{C00}
	$$\mathcal{C}_{00}=C^*(\{u_{\xi_m}cu_{\xi_{q-m}}u_{\xi_q}^*:\; c\in C^*(z)\;0\leq m\leq q\;\})$$ (where $u_{\xi_0}$ is $I$).
\end{lem}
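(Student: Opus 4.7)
The plan is to leverage the isomorphism $\tau: \mathcal{C}_0 \xrightarrow{\sim} M_{d^q} \otimes \mathcal{C}_{00}$ furnished by Lemma~\ref{M2C} (applied to $\mathcal{C}_0$ with the $M_{d^q}$-matrix units $\{u_\alpha u_\beta^* : |\alpha|=|\beta|=q\}$, which are valid by Lemma~\ref{alpha} and lie in $\mathcal{C}_0$) and to verify the two inclusions separately between $\mathcal{C}_{00}$ and $C^*(G)$, where $p = u_{\xi_q} u_{\xi_q}^*$ and
\[
G = \{u_{\xi_m} c u_{\xi_{q-m}} u_{\xi_q}^* : c \in C^*(z),\ 0 \leq m \leq q\}.
\]

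For the easy inclusion $C^*(G) \subseteq \mathcal{C}_{00}$, I would use Lemma~\ref{comm}: $c \in C^*(z)$ commutes with the equal-length matrix unit $u_{\xi_{q-m}} u_{\xi_{q-m}}^*$. Substituting $u_{\xi_q}^* = u_{\xi_{q-m}}^* u_{\xi_m}^*$ and commuting $c$ past $u_{\xi_{q-m}} u_{\xi_{q-m}}^*$ yields
\[
u_{\xi_m} c u_{\xi_{q-m}} u_{\xi_q}^* \;=\; u_{\xi_m} u_{\xi_{q-m}} u_{\xi_{q-m}}^* c u_{\xi_m}^* \;=\; p\cdot(u_{\xi_m} c u_{\xi_m}^*),
\]
and a parallel computation shows that this also equals $p(u_{\xi_m} c u_{\xi_m}^*) p$. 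Since $u_{\xi_m} c u_{\xi_m}^* \in \mathcal{C}_0$ (approximating $c$ by polynomials in $z, z^*$), the generator lies in $p\mathcal{C}_0 p = \mathcal{C}_{00}$.

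For the converse $\mathcal{C}_{00} \subseteq C^*(G)$, I would compute $\tau$ on a typical generator $g = u_\mu z^l u_\nu^*$ of $\mathcal{C}_0$ with $|\mu|=|\nu|=k \leq q$. Its $(\alpha,\beta)$-entry is $u_{\xi_q} u_\alpha^* u_\mu z^l u_\nu^* u_\beta u_{\xi_q}^*$, which vanishes unless $\alpha = \mu\alpha'$ and $\beta = \nu\beta'$ with $|\alpha'| = |\beta'| = q-k$; in that case the entry reduces to $u_{\xi_q} u_{\alpha'}^* z^l u_{\beta'} u_{\xi_q}^*$. Writing $u_{\xi_q} = u_{\xi_k} u_{\xi_{q-k}}$ and commuting $z^l$ past the equal-length matrix unit $u_{\xi_{q-k}} u_{\alpha'}^*$ via Lemma~\ref{comm} collapses this to $\delta_{\alpha',\beta'}\cdot u_{\xi_k} z^l u_{\xi_{q-k}} u_{\xi_q}^* \in G$. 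Consequently $\tau(g) = M_g \otimes b_g$ with $M_g \in M_{d^q}$ and $b_g \in G$; the same shape persists under adjoints and products, giving $\tau(\mathcal{C}_0) \subseteq M_{d^q} \otimes C^*(G)$. Combining with $\tau(\mathcal{C}_0) = M_{d^q} \otimes \mathcal{C}_{00}$ and taking the $(\xi_q,\xi_q)$-slice delivers $\mathcal{C}_{00} \subseteq C^*(G)$.

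The main obstacle is the collapsing identity $u_{\xi_q} u_{\alpha'}^* z^l u_{\beta'} u_{\xi_q}^* = \delta_{\alpha',\beta'}\, u_{\xi_k} z^l u_{\xi_{q-k}} u_{\xi_q}^*$, which is what pins the corner $\mathcal{C}_{00}$ down to a generating set indexed only by the distinguished word $\xi_m$ rather than by all words of length $m$. This reduction rests squarely on Condition A(p) via Lemma~\ref{comm}; without the equal-length commutation of $z$ with matrix units, the off-diagonal entries would not vanish and one would be forced to enlarge $G$ to the unwieldy family $\{u_{\xi_q} u_{\alpha'}^* z^l u_{\beta'} u_{\xi_q}^*\}$.
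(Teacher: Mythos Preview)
Your proof is correct and rests on the same key ingredient as the paper's, namely the commutation of $C^*(z)$ with equal-length matrix units (Lemma~\ref{comm}); the collapsing identity you isolate is exactly the paper's computation. The only difference is one of packaging: the paper compresses a generator of $\mathcal{C}_0$ by $p=u_{\xi_q}u_{\xi_q}^*$ and reads off the result, while you compute \emph{all} matrix entries of $\tau(g)$ (essentially the paper's formula~(\ref{tau}), which appears just after the lemma) and then slice---this makes the inclusion $\mathcal{C}_{00}\subseteq C^*(G)$ slightly more transparent, but the two arguments are equivalent.
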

\begin{proof}
	$\mathcal{C}_0$ is generated by $u_{\alpha}cu_{\beta}^*$ for $0\leq |\alpha|=|\beta|\leq q$ and $c\in C^*(z)$. Fix such a generator and write $m=|\alpha|=|\beta|$. Using the definition of $\mathcal{C}_{00}$, we look at $u_{\xi_q}u_{\xi_q}^*u_{\alpha}cu_{\beta}^*u_{\xi_q}u_{\xi_q}^*$. Now, $u_{\xi_q}^*u_{\alpha}=u_{\xi_{q-m}}^*u_{\xi_m}^*u_{\alpha}$ and this is $0$ unless $\alpha=\xi_m$ and, in this case, it is equal to $u_{\xi_{q-m}}^*$. So we can replace $u_{\xi_q}u_{\xi_q}^*u_{\alpha}$ by $u_{\xi_q}u_{\xi_{q-m}}^*$. Similarly, $u_{\beta}^*u_{\xi_q}u_{\xi_q}^*$ can be replaced by $u_{\xi_{q-m}}u_{\xi_q}^*$. Thus we now consider $u_{\xi_q}u_{\xi_{q-m}}^*c u_{\xi_{q-m}}u_{\xi_q}^*$. Writing $u_{\xi_q} =u_{\xi_m}u_{\xi_{q-m}}$ and using Lemma ~\ref{comm} we compute $u_{\xi_q}u_{\xi_{q-m}}^*c u_{\xi_{q-m}}u_{\xi_q}^*=u_{\xi_m}u_{\xi_{q-m}}u_{\xi_{q-m}}^*c u_{\xi_{q-m}}u_{\xi_q}^*=u_{\xi_m}cu_{\xi_{q-m}}u_{\xi_{q-m}}^* u_{\xi_{q-m}}u_{\xi_q}^*=u_{\xi_m}cu_{\xi_{q-m}}u_{\xi_q}^*$, and we are done.
	\end{proof}

Computed on the generators of $\mathcal{C}_0$, we have, for $0\leq |\rho|=|\sigma|=m\leq q=p-1$ and $c\in C^*(z)$,
$\tau(u_{\rho}cu_{\sigma}^*)=\Sigma_{|\alpha|=|\beta|=q} e_{\alpha,\beta}\otimes u_{\xi_q}u_{\alpha}^*u_{\rho}cu_{\sigma}^*u_{\beta}u_{\xi_q}^*$. Now, $u_{\sigma}^*u_{\beta}=u_{\alpha}^*u_{\rho}=0$ unless $\alpha=\rho \alpha'$ and $\beta=\sigma \beta'$ (for some $\alpha',\beta'$ with $|\alpha'|=|\beta'|=q-m$) and, in this case, $u_{\sigma}^*u_{\beta}=u_{\beta'}$ and $u_{\alpha}^*u_{\rho}=u_{\alpha'}$. We also have $u_{\xi_q}=u_{\xi_m}u_{\xi_{q-m}}$. Thus $\tau(u_{\rho}cu_{\sigma}^*)=\Sigma_{|\alpha'|=|\beta'|=q-m}e_{\rho \alpha',\sigma \beta'}\otimes u_{\xi_m}(u_{\xi_{q-m}}u_{\alpha'}^*)cu_{\beta'}u_{\xi_q}^*$. But $(u_{\xi_{q-m}}u_{\alpha'}^*)cu_{\beta'}=c(u_{\xi_{q-m}}u_{\alpha'}^*)u_{\beta'}$ and $u_{\alpha'}^*u_{\beta'}=0$ unless $\alpha'=\beta'$ and $u_{\alpha'}^*u_{\alpha'}=I$. Thus, finally, we have
\begin{equation}\label{tau}
\tau(u_{\rho}cu_{\sigma}^*)=\Sigma_{|\alpha'|=q-m}e_{\rho \alpha',\sigma \alpha'}\otimes u_{\xi_m}cu_{\xi_{q-m}}u_{\xi_q}^*.
\end{equation}  
It will also be useful to compute what $\tau^{-1}$ does to the generators of $M_{d^q}\otimes \mathcal{C}_{00}$. When $c\in C^*(z)$ , $|\alpha|=|\beta|=q$ and $0\leq m \leq q$, we have $\tau^{-1}(e_{\alpha,\beta}\otimes u_{\xi_m}cu_{\xi_{q-m}}u_{\xi_q}^*)=u_{\alpha}u_{\xi_q}^* u_{\xi_m}cu_{\xi_{q-m}}u_{\xi_q}^*u_{\xi_q}u_{\beta}^*=u_{\alpha}u_{\xi_{q-m}}^*cu_{\xi_{q-m}}u_{\beta}^*$, since $u_{\xi_q}^*=u_{\xi_{q-m}}^*u_{\xi_m}^*$ and $u_{\xi_m}^*u_{\xi_m}=I$. Finally, since $u_{\xi_{q-m}}=u_{\xi_{m}}^*u_{\xi_{q}}$, we can write
\begin{equation}\label{tauinv}
\tau^{-1}(e_{\alpha,\beta}\otimes u_{\xi_m}cu_{\xi_{q-m}}u_{\xi_q}^*)=(u_{\alpha}u_{\xi_q}^*)(u_{\xi_m}cu_{\xi_m}^*)(u_{\xi_q}u_{\beta}^*).
\end{equation}


	

\begin{lem}\label{psintilde}
Let $\tau$ be as above and write
 $$\tilde{\tau}_n=id_{M_{d^{pn}}}\otimes \tau:M_{d^{pn}}\otimes \mathcal{C}_0 \rightarrow M_{d^{pn}}\otimes M_{d^{p-1}} \otimes \mathcal{C}_{00}\cong M_{d^{pn+p-1}}\otimes \mathcal{C}_{00}$$
  and $\tilde{\psi}_n=\tilde{\tau}_{n+1}\circ \psi_n \circ \tilde{\tau}_n^{-1}:M_{d^{pn+p-1}}\otimes \mathcal{C}_{00}\rightarrow    M_{d^{pn+2p-1}}\otimes \mathcal{C}_{00}$. Then, for $B\in M_{d^{pn}}$,  $e_{\gamma,\delta}\in M_{d^q}$ and $a\in \mathcal{C}_{00}$,
\begin{equation}\label{eqpsintilde}
\tilde{\psi}_n(B\otimes e_{\gamma,\delta}\otimes a)=B\otimes e_{\gamma,\delta} \otimes (\Sigma_{|\rho|=p}e_{ \rho, \rho})\otimes a.
\end{equation} 	
	
	\end{lem}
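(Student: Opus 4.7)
The plan is to verify the identity on generators and extend by linearity and norm-continuity. By Lemma~\ref{C00}, $\mathcal{C}_{00}$ is spanned by elements of the form $a=u_{\xi_m}z^l u_{\xi_{q-m}}u_{\xi_q}^*$ with $0\leq m\leq q$ and $l\geq 0$, so it suffices to take $B=e_{\alpha',\beta'}$ with $|\alpha'|=|\beta'|=pn$, $e_{\gamma,\delta}$ with $|\gamma|=|\delta|=q$, and $a$ of that form.

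First I would peel off $\tilde{\tau}_n^{-1}$: using (\ref{tauinv}) together with $u_{\xi_q}^*u_{\xi_m}=u_{\xi_{q-m}}^*$ and $u_{\xi_m}^*u_{\xi_q}=u_{\xi_{q-m}}$ (both consequences of $u_{\xi_q}=u_{\xi_m}u_{\xi_{q-m}}$ and $u_{\xi_m}^*u_{\xi_m}=I$), I obtain
$$\tilde{\tau}_n^{-1}(e_{\alpha',\beta'}\otimes e_{\gamma,\delta}\otimes a)=e_{\alpha',\beta'}\otimes u_\gamma u_{\xi_{q-m}}^*z^l u_{\xi_{q-m}}u_\delta^*.$$

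Next comes the delicate step: applying $\psi_n=\phi_{n+1}^{-1}\circ\iota_n\circ\phi_n$. Applying $\phi_n$ (then trivially including into $\mathcal{C}_{n+1}$ via $\iota_n$) gives $u_{\alpha'\gamma}u_{\xi_{q-m}}^*z^l u_{\xi_{q-m}}u_{\beta'\delta}^*$. The key rewriting is
$$u_{\xi_{q-m}}^*z^lu_{\xi_{q-m}}=\Sigma_{|\rho|=m+1}u_\rho z^l u_\rho^*,$$
obtained as in the proof of Lemma~\ref{qDAp} by inserting $I=\Sigma_{|\rho|=m+1}u_\rho u_\rho^*$ and using Condition A(p) to commute $z^l$ with $u_{\xi_{q-m}}u_\rho$ (which has length $p$). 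This turns the expression into $\Sigma_{|\rho|=m+1} u_{\alpha'\gamma\rho}z^l u_{\beta'\delta\rho}^*$. Splitting each $\rho=\rho_0\rho'$ with $|\rho_0|=1$ and $|\rho'|=m$, every summand factors as $u_{\alpha'\gamma\rho_0}(u_{\rho'}z^lu_{\rho'}^*)u_{\beta'\delta\rho_0}^*$ with $|\alpha'\gamma\rho_0|=p(n+1)$ and $u_{\rho'}z^lu_{\rho'}^*\in\mathcal{C}_0$, so $\phi_{n+1}^{-1}$ yields
$$\psi_n\!\left(\tilde{\tau}_n^{-1}(e_{\alpha',\beta'}\otimes e_{\gamma,\delta}\otimes a)\right)=\Sigma_{\rho_0,|\rho'|=m} e_{\alpha'\gamma\rho_0,\beta'\delta\rho_0}\otimes u_{\rho'}z^l u_{\rho'}^*.$$

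Finally, applying $\tilde{\tau}_{n+1}=id\otimes\tau$ and using (\ref{tau}), each $\tau(u_{\rho'}z^lu_{\rho'}^*)$ becomes $\Sigma_{|\mu|=q-m}e_{\rho'\mu,\rho'\mu}\otimes a$. Setting $\eta=\rho'\mu$ (ranging over all words of length $q$ as $\rho',\mu$ vary) and $\rho=\rho_0\eta$ (ranging over all words of length $p$), and using the natural matrix-unit identifications $e_{\alpha'\gamma\rho_0,\beta'\delta\rho_0}\otimes e_{\eta,\eta}\leftrightarrow e_{\alpha',\beta'}\otimes e_{\gamma,\delta}\otimes e_{\rho_0\eta,\rho_0\eta}$ corresponding to $M_{d^{p(n+1)+q}}\cong M_{d^{pn}}\otimes M_{d^q}\otimes M_{d^p}$, the result collapses to $e_{\alpha',\beta'}\otimes e_{\gamma,\delta}\otimes(\Sigma_{|\rho|=p}e_{\rho,\rho})\otimes a$. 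By linearity in $B$ the formula then holds for arbitrary $B\in M_{d^{pn}}$. The main obstacle is the careful bookkeeping of the nested tensor-product identifications; the crucial move is the rewriting $u_{\xi_{q-m}}^*z^lu_{\xi_{q-m}}=\Sigma_{|\rho|=m+1}u_\rho z^l u_\rho^*$, which is what allows the element to be reassembled into the standard form required by $\phi_{n+1}^{-1}$.
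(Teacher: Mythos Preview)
Your proof is correct and follows the same broad outline as the paper (reduce to elementary tensors, then compute $\tilde\tau_n^{-1}$, $\psi_n$, and $\tilde\tau_{n+1}$ in turn), but the middle step is handled differently. The paper keeps $\tau^{-1}(e_{\gamma,\delta}\otimes a)$ in the factored form $(u_\gamma u_{\xi_q}^*)(u_{\xi_m}cu_{\xi_m}^*)(u_{\xi_q}u_\delta^*)$, applies the ready-made formula for $\psi_n$ from Lemma~\ref{maps}(4) to each factor via multiplicativity, and then collapses the resulting product of matrix units. You instead simplify $\tau^{-1}(e_{\gamma,\delta}\otimes a)$ to a single expression and compute $\phi_{n+1}^{-1}\circ\iota_n\circ\phi_n$ from scratch, with your identity $u_{\xi_{q-m}}^*z^lu_{\xi_{q-m}}=\Sigma_{|\rho|=m+1}u_\rho z^l u_\rho^*$ doing the work that Lemma~\ref{maps}(4) does in the paper. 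Your route is a bit more self-contained; the paper's exploits structure already recorded.

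One small imprecision in your reduction: Lemma~\ref{C00} says $\mathcal{C}_{00}$ is the $C^*$-algebra \emph{generated} by the elements $u_{\xi_m}cu_{\xi_{q-m}}u_{\xi_q}^*$ with $c\in C^*(z)$, not that it is \emph{spanned} by those with $c=z^l$, so ``linearity and norm-continuity'' alone do not obviously suffice. The fix is painless: either carry a general $c\in C^*(z)$ through your computation (nothing changes, since all you use about $z^l$ is that it commutes with $u_\alpha$ for $|\alpha|=p$, which holds for every $c\in C^*(z)$ by Lemma~\ref{uz}), or observe that both $\tilde\psi_n$ and the right-hand side are $*$-homomorphisms, so agreement on a $*$-generating set of the domain is enough.
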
 
\begin{proof}
	Recall that we write $q$ for $p-1$. It is enough to prove it for $B=e_{\alpha,\beta}$ (with $|\alpha|=|\beta|=pn$) and $a=u_{\xi_m}cu_{\xi_{q-m}}u_{\xi_q}^*$ where $0\leq m\leq q$ and $c\in C^*(z)$. Thus, we shall fix such $\alpha$ and $\beta$ and $c\in C^*(z)$ and prove
$$\tilde{\psi}_n(e_{\alpha,\beta}\otimes e_{\gamma,\delta}\otimes u_{\xi_m}cu_{\xi_{q-m}}u_{\xi_q}^* )=\Sigma_{|\gamma|=p}e_{\alpha \gamma \rho,\beta \delta \rho}\otimes u_{\xi_m}cu_{\xi_{q-m}}u_{\xi_q}^*.$$ 

Indeed, 	
$$\tilde{\psi}_n(e_{\alpha,\beta}\otimes e_{\gamma,\delta}\otimes u_{\xi_m}cu_{\xi_{q-m}}u_{\xi_q}^* )=\tilde{\tau}_{n+1}(\psi_n(e_{\alpha,\beta}\otimes (u_{\gamma}u_{\xi_q}^*)(u_{\xi_m}cu_{\xi_m}^*)(u_{\xi_q}u_{\delta}^*) ))$$  $$=\tilde{\tau}_{n+1}(\psi_n(e_{\alpha,\alpha}\otimes u_{\gamma}u_{\xi_q}^*)\psi_n(e_{\alpha,\beta}\otimes u_{\xi_m}cu_{\xi_m}^*)\psi_n(e_{\beta,\beta}\otimes u_{\xi_q}u_{\delta}^*))$$  $$=\tilde{\tau}_{n+1}((\Sigma_i e_{\alpha \gamma i,\alpha \xi_q i}\otimes I)(\Sigma_{|\rho|=p-m}e_{\alpha \xi_m \rho,\beta \xi_m \rho}\otimes \Sigma_{|\sigma|=m}u_{\sigma}cu_{\sigma}^*)(\Sigma_j e_{\beta \xi_q j,\beta \delta j}\otimes I))$$  $$=\tilde{\tau}_{n+1}(\Sigma_{i,j, |\rho|=p-m}e_{\alpha \gamma i, \alpha \xi_q i}e_{\alpha \xi_m \rho, \beta \xi_m \rho}e_{\beta \xi_q j, \beta \delta j}\otimes \Sigma_{|\sigma|=m}u_{\sigma}cu_{\sigma}^*)$$  $$=\tilde{\tau}_{n+1}(\Sigma_i e_{\alpha \gamma i,\beta \delta i}\otimes \Sigma_{|\sigma|=m}u_{\sigma}cu_{\sigma}^*) . $$ 
For the last equality we used the fact that, for the product of the matrix units to be non zero we need $\xi_q i=\xi_m\rho=\xi_q j$ and it follows that $i=j$ and $\rho=\xi_{q-m} i$.
Now,
$$\tilde{\tau}_{n+1}(\Sigma_i e_{\alpha \gamma i,\beta \delta i}\otimes \Sigma_{|\sigma|=m}u_{\sigma}cu_{\sigma}^*)=\Sigma_i e_{\alpha \gamma i,\beta \delta i}\otimes \tau(\Sigma_{|\sigma|=m}u_{\sigma}cu_{\sigma}^*).$$
We have $\tau(u_{\sigma}cu_{\sigma}^*)=\Sigma_{|\alpha'|=q-m}e_{\sigma \alpha',\sigma \alpha'}\otimes u_{\xi_m}cu_{\xi_{q-m}}u_{\xi_q}^*$ and, thus, we get  
$\Sigma_i e_{\alpha \gamma i,\beta \delta i}\otimes (\Sigma_{|\sigma|=m} \Sigma_{|\alpha'|=q-m}e_{\sigma \alpha',\sigma \alpha'}\otimes u_{\xi_m}cu_{\xi_{q-m}}u_{\xi_q}^*)=\Sigma_{i, |\sigma'|=q}e_{\alpha \gamma i \sigma',\beta \delta i \sigma'}\otimes  u_{\xi_m}cu_{\xi_{q-m}}u_{\xi_q}^* $. Write $a= u_{\xi_m}cu_{\xi_{q-m}}u_{\xi_q}^* $ and $i\sigma'=\rho$ and note that then $e_{\alpha \gamma i \sigma',\beta \delta i \sigma'}\otimes a=e_{\alpha,\beta}\otimes e_{\gamma,\delta}\otimes e_{\rho,\rho}\otimes a$ to get the desired result.

	\end{proof}
\begin{cor}\label{UHFC00}
	Assume that Condition A(p) holds. Then
	$$q(\mathcal{D})\cong M_{d^{\infty}}\otimes \mathcal{C}_{00}. $$
\end{cor}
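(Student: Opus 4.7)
The strategy is to combine Corollary~\ref{dirlim} with the results describing $\mathcal{C}_0\cong M_{d^q}\otimes \mathcal{C}_{00}$ (where $q=p-1$) and the explicit form of the conjugated connecting maps given in Lemma~\ref{psintilde}.

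First I would start from Corollary~\ref{dirlim}, which gives
$$q(\mathcal{D})\cong \lim_{n\to\infty}(M_{d^{pn}}\otimes \mathcal{C}_0,\psi_n).$$
Since the isomorphisms $\tilde\tau_n=id_{M_{d^{pn}}}\otimes \tau:M_{d^{pn}}\otimes \mathcal{C}_0\to M_{d^{pn+q}}\otimes \mathcal{C}_{00}$ are compatible with the connecting maps $\psi_n$ in the sense that $\tilde\psi_n=\tilde\tau_{n+1}\circ\psi_n\circ\tilde\tau_n^{-1}$, the direct limit is also
$$q(\mathcal{D})\cong \lim_{n\to\infty}(M_{d^{pn+q}}\otimes \mathcal{C}_{00},\tilde\psi_n).$$

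Next I would use the explicit formula for $\tilde\psi_n$ from Lemma~\ref{psintilde}: on a matrix unit $B\otimes e_{\gamma,\delta}\in M_{d^{pn}}\otimes M_{d^q}=M_{d^{pn+q}}$ and $a\in \mathcal{C}_{00}$,
$$\tilde\psi_n(B\otimes e_{\gamma,\delta}\otimes a)=B\otimes e_{\gamma,\delta}\otimes\Bigl(\sum_{|\rho|=p} e_{\rho,\rho}\Bigr)\otimes a.$$
Identifying $M_{d^{pn}}\otimes M_{d^q}\otimes M_{d^p}$ with $M_{d^{p(n+1)+q}}$, this says precisely that $\tilde\psi_n(y\otimes a)=(y\otimes I_{d^p})\otimes a$ for $y\in M_{d^{pn+q}}$ and $a\in \mathcal{C}_{00}$. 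Thus $\tilde\psi_n=\mu_n\otimes id_{\mathcal{C}_{00}}$, where $\mu_n:M_{d^{pn+q}}\to M_{d^{p(n+1)+q}}$ is the unital embedding $y\mapsto y\otimes I_{d^p}$.

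Finally I would invoke the fact that the direct limit of matrix algebras $(M_{d^{pn+q}},\mu_n)$ is $M_{d^\infty}$ (the subsequence $\{pn+q\}$ is cofinal in $\mathbb{N}$, and the unital connecting maps $y\mapsto y\otimes I_{d^p}$ yield the standard UHF algebra of type $d^\infty$), together with the fact that finite type I algebras are nuclear so that taking the spatial tensor product with the fixed $C^*$-algebra $\mathcal{C}_{00}$ commutes with direct limits. Putting these together,
$$q(\mathcal{D})\cong \lim_{n\to\infty}(M_{d^{pn+q}}\otimes \mathcal{C}_{00},\mu_n\otimes id)\cong \Bigl(\lim_{n\to\infty}(M_{d^{pn+q}},\mu_n)\Bigr)\otimes \mathcal{C}_{00}\cong M_{d^\infty}\otimes \mathcal{C}_{00},$$
as required.

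The only subtle point is the verification that the connecting maps factor as $\mu_n\otimes id_{\mathcal{C}_{00}}$, which is exactly the content of Lemma~\ref{psintilde}; once that is established the result is a standard direct-limit computation, so there is really no serious obstacle beyond careful bookkeeping of tensor factors.
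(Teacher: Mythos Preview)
Your proposal is correct and is exactly the argument the paper intends: the corollary is stated without proof immediately after Lemma~\ref{psintilde}, and your write-up simply spells out the direct-limit bookkeeping (Corollary~\ref{dirlim} plus the conjugation by $\tilde\tau_n$ reducing $\tilde\psi_n$ to $\mu_n\otimes id_{\mathcal{C}_{00}}$) that the paper leaves implicit.
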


\section{Examples}\label{examples}
In this section we assume that $d=2$ and Condition A(2) holds so that $q(\mathcal{D})\cong M_{2^{\infty}}\otimes \mathcal{C}_{00}$ where $\mathcal{C}_{00}=C^*(\{c_1u_1u_1^*, u_1c_2u_1^*: \; c_1,c_2\in C^*(z)\;\})$. Note that it is a $C^*$-algebra with unit $u_1u_1^*$.

It will be convenient to write $I_l$ for the identity matrix of size $2^l\times 2^l$. (Recall that $Z_k$ is a $2^k\times 2^k$ matrix, as $d=2$ here).

	Fix $N\in \mathbb{N}$ and $A,B\in M_{2^N}(\mathbb{C})$ such that $A$ and $B$ are positive and invertible. Define $Z=\{Z_k\}$ by setting $Z_k=I$ for $k<N$, $Z_N=A$, $Z_{N+1}=B\oplus B=I_1\otimes B\in M_{2^(N+1)}(\mathbb{C})$ and, for $m>2$, $Z_{N+m}=I_2\otimes Z_{N+m-2}$. Then, Condition A(2) is clearly satisfied. 
	
\begin{lem}\label{AB} With the set up as above we get, for every $k\geq N+1$ (and $l=k-N-1$),
	\begin{enumerate}
		\item [(i)] If $l$ is even, $$P_kS_1S_1^*ZP_k=e_{1,1}\otimes I_l \otimes B $$ and $$P_k S_1 Z S_1^*P_k=e_{1,1}\otimes I_l \otimes A.$$
		\item[(ii)]  If $l$ is odd, $$P_kS_1S_1^*ZP_k=e_{1,1}\otimes I_l \otimes A $$ and $$P_k S_1 Z S_1^*P_k=e_{1,1}\otimes I_l \otimes B.$$
		\item[(iii)] For every $k\geq N+1$,
		$$P_kS_1S_1^*P_k=e_{1,1}\otimes I_l\otimes I_N.$$
	\end{enumerate}
It follows that, for every (non commutative) polynomial $p(x,y,z)$, the following are equivalent
\begin{enumerate}
	\item [(1)]  $p(A,B,I)=p(B,A,I)=0$.
	\item[(2)]  $P_kp(S_1S_1^*Z,S_1ZS_1^*,S_1S_1^*)P_k=0$ for every $k\geq N+1$.
	\item[(3)]  $P_kp(S_1ZS_1^*,S_1S_1^*Z,S_1S_1^*)P_k=0$ for every $k\geq N+1$.
	\item[(4)] $p(u_1u_1^*z,u_1zu_1^*,u_1u_1^*)=0$.
	\item[(5)] $p(u_1zu_1^*,u_1u_1^*z,u_1u_1^*)=0$.
\end{enumerate}  
\end{lem}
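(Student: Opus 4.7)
The plan is to split the argument into two phases: first establishing the concrete matrix identities (i)--(iii), and then leveraging them for the equivalence of (1)--(5).

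For Phase I, I will identify $P_k\calf(\cc^2)$ with $(\cc^2)^{\otimes k}$ and read off each operator as a matrix in the standard tensor block structure. Part (iii) will be immediate, since $P_kS_1S_1^*P_k$ is just the projection onto vectors beginning with $e_1$, i.e.\ $e_{1,1}\otimes I_{k-1}=e_{1,1}\otimes I_l\otimes I_N$. For (i) and (ii), I will first unroll the recursion $Z_{N+m}=I_2\otimes Z_{N+m-2}$ from the base cases $Z_N=A$ and $Z_{N+1}=I_1\otimes B$; for $k=N+1+l$ this yields
\[
Z_k=\begin{cases}I_{l+1}\otimes B & l\text{ even,}\\ I_{l+1}\otimes A & l\text{ odd,}\end{cases}\qquad Z_{k-1}=\begin{cases}I_l\otimes A & l\text{ even,}\\ I_l\otimes B & l\text{ odd.}\end{cases}
\]
Then $P_kS_1S_1^*ZP_k=(e_{1,1}\otimes I_{k-1})Z_k$ reads off the $(1,1)$-block of $Z_k$, while $P_kS_1ZS_1^*P_k=e_{1,1}\otimes Z_{k-1}$ (since $S_1^*$ strips the leading $e_1$, $Z$ acts as $Z_{k-1}$, and $S_1$ reinserts $e_1$). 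Both formulas drop out.

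For Phase II, the key observation will be that each of $X_1:=S_1S_1^*Z$, $X_2:=S_1ZS_1^*$, $X_3:=S_1S_1^*$ maps $P_k\calf$ into itself for every $k\geq 0$ and annihilates $P_0\calf$, so $T:=p(X_1,X_2,X_3)$ is block-diagonal with respect to $\calf=\bigoplus_k P_k\calf$, with $k$-th block $p(P_kX_1P_k,P_kX_2P_k,P_kX_3P_k)$. Since all three restricted generators have the form $e_{1,1}\otimes I_l\otimes(\cdot)$ and $e_{1,1}$ is idempotent, this block simplifies to $e_{1,1}\otimes I_l\otimes p(B,A,I)$ when $l$ is even and to $e_{1,1}\otimes I_l\otimes p(A,B,I)$ when $l$ is odd; the analogous computation with the first two arguments of $p$ swapped will handle (3).

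From here the equivalences follow quickly. Condition (2) says these blocks vanish for all $k\geq N+1$, and since both parities of $l=k-N-1$ occur, it is equivalent to $p(A,B,I)=p(B,A,I)=0$, i.e.\ (1); the same reasoning gives (1)$\Leftrightarrow$(3). For (4)$\Leftrightarrow$(1), I will note that $p(u_1u_1^*z,u_1zu_1^*,u_1u_1^*)=0$ in $\calt/\calk$ is the same as $T\in\calk$, and a block-diagonal operator with finite-dimensional blocks is compact iff the block norms tend to zero; since $\|P_kTP_k\|$ equals $\|p(B,A,I)\|$ or $\|p(A,B,I)\|$ (independent of $k$ on each parity class), this forces both quantities to vanish, and conversely if they vanish then $T$ has finite rank. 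Symmetrically, (1)$\Leftrightarrow$(5). The main obstacle is just the parity bookkeeping in Phase I: because $Z_k$ and $Z_{k-1}$ end in different matrices from $\{A,B\}$, the operators $S_1S_1^*Z$ and $S_1ZS_1^*$ swap which of $A,B$ they produce as $l$ changes parity, and this is precisely what makes (2) and (3) both collapse to the symmetric condition (1).
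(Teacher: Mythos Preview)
Your proposal is correct and follows essentially the same approach as the paper. The paper establishes (i) by applying the operators to basis tensors $e_{n_1}\otimes\cdots\otimes e_{n_k}$ rather than first writing out $Z_k$ and $Z_{k-1}$ as tensor products, and it leaves the block-diagonality of $p(X_1,X_2,X_3)$ implicit when it asserts that compactness is equivalent to $\|P_k(\,\cdot\,)P_k\|\to 0$; apart from these presentational differences, the argument is the same.
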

\begin{proof}
We will first prove (i). The proof of (ii) and (iii) is similar and is omitted. To prove (i), we assume that $l$ is even (so that $k-N$ is odd) and we apply the right hand side to $e_{n_1}\otimes \cdots \otimes e_{n_k}$.
$$P_kS_1S_1^*ZP_k(e_{n_1}\otimes \cdots \otimes e_{n_k})=P_kS_1S_1^*(I_{l+1}\otimes B)(e_{n_1}\otimes \cdots \otimes e_{n_k})$$  $$=P_kS_1S_1^*(e_{n_1}\otimes \cdots \otimes e_{n_{l+1}}\otimes B(e_{n_{l+2}}\otimes \cdots \otimes e_{n_k}))=\delta_{n_1,1} e_1\otimes e_{n_2} \otimes \cdots \otimes e_{n_{l+1}}\otimes  B(e_{n_{l+2}}\otimes \cdots \otimes e_{n_k})$$  $$=(e_{1,1}\otimes I_l \otimes B)(e_{n_1}\otimes \cdots \otimes e_{n_k}).$$
Also,
$$P_k S_1 Z S_1^*P_k(e_{n_1}\otimes \cdots \otimes e_{n_k})=P_kS_1ZS_1^*(e_{n_1}\otimes \cdots \otimes e_{n_k})=\delta_{n_1,1} P_kS_1Z(e_{n_2}\otimes \cdots \otimes e_{n_k})$$  $$=	\delta_{n_1,1} P_kS_1(I_l\otimes A)(e_{n_2}\otimes \cdots \otimes e_{n_k})=	\delta_{n_1,1}P_kS_1(e_{n_2}\otimes\cdots \otimes e_{n_{l+1}}\otimes A(e_{n_{l+2}}\otimes \cdots \otimes e_{n_k}))$$  $$=\delta_{n_1,1}e_1\otimes e_{n_2}\otimes\cdots \otimes e_{n_{l+1}}\otimes A(e_{n_{l+2}}\otimes \cdots \otimes e_{n_k})=(e_{1,1}\otimes I_l \otimes A)(e_{n_1}\otimes \cdots \otimes e_{n_k}).$$

We now turn to prove the equivalence of (1)-(5).

Assume (1) holds. Then, for every $k=N+1+2m$ (that is, $l=2m$ is even), $P_kp(S_1S_1^*Z,S_1ZS_1^*,S_1S_1^*)P_k=e_{1,1}\otimes I_{2m} \otimes p(B,A,I) =0$ (using (i) and the assumption that (1) holds). Similarly, for $k=N+2m$ (that is, $l=2m-1$ is odd), we use (ii) to get $P_kp(S_1S_1^*Z,S_1ZS_1^*,S_1S_1^*)P_k=e_{1,1}\otimes I_{2m-1} \otimes p(A,B,I) =0$. Thus, (1) implies (2). The arguments above can be reversed to show that, in fact, (1) and (2) are equivalent.

The proof that (1) and (3) are equivalent is almost identical and is omitted.

Since $p(u_1u_1^*z,u_1zu_1^*,u_1u_1^*)=q(p(S_1S_1^*Z,S_1ZS_1^*,S_1S_1^*))$, (4) is saying that $p(S_1S_1^*Z,S_1ZS_1^*,S_1S_1^*)$ is compact and this is equivalent to
\begin{equation}\label{cpt}
\lim_k \|P_k(p(S_1S_1^*Z,S_1ZS_1^*,S_1S_1^*))P_k\|=0.
\end{equation}
It is now clear that (2) implies (4) (and, therefore, (1) implies (4)). To show that (4) implies (1), assume that (\ref{cpt}) holds. But then, by considering a subsequence, $$\lim_m \|P_{N+1+2m}(p(S_1S_1^*Z,S_1ZS_1^*,S_1S_1^*))P_{N+1+2m}\|=0.$$ But this implies that $p(B,A,I)=0$. Similarly, for the subsequence $\{N+2m\}$, we get $p(A,B,I)=0$ , completing the proof that (4) is equivalent to (1).
The proof that (5) is equivalent to (1) is almost identical.
	
	\end{proof}
\begin{lem}	
If $A,B$ commute then $zu_1u_1^*$ commutes with $u_1zu_1^*$ and, thus, $\mathcal{C}_{00}$ is commutative.
\end{lem}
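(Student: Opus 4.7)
The plan is to deduce both assertions directly from Lemma~\ref{AB}, combined with Lemma~\ref{comm}. To get the commutation of $zu_1u_1^*$ with $u_1zu_1^*$, I would invoke the equivalence of conditions (1) and (4) of Lemma~\ref{AB} applied to the non-commutative polynomial $p(x,y,w)=xy-yx$ (which does not actually involve $w$). When $A$ and $B$ commute, both $p(A,B,I)=AB-BA$ and $p(B,A,I)=BA-AB$ vanish, so condition (1) holds, and hence condition (4) gives
\[
(u_1u_1^*z)(u_1zu_1^*)=(u_1zu_1^*)(u_1u_1^*z).
\]
Lemma~\ref{comm} (applied with $p=2$ to the words $\alpha=\beta=1$ of length $1$) shows that $z$ commutes with $u_1u_1^*$, so $u_1u_1^*z=zu_1u_1^*$, and the asserted commutation of $zu_1u_1^*$ with $u_1zu_1^*$ follows.

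For the second assertion, I would use Lemma~\ref{C00} (with $q=p-1=1$) to write $\mathcal{C}_{00}$ as the $C^*$-algebra generated by the two families $\{cu_1u_1^* : c\in C^*(z)\}$ and $\{u_1cu_1^* : c\in C^*(z)\}$. Since we may assume $z\geq 0$ by Lemma~\ref{ueq}, the algebra $C^*(z)$ is commutative and singly generated by the self-adjoint element $z$. Using the identities $(zu_1u_1^*)^n=z^nu_1u_1^*$ (valid because $z$ commutes with $u_1u_1^*$) and $(u_1zu_1^*)^n=u_1z^nu_1^*$ (valid because $u_1^*u_1=I$), one checks that these two families coincide with the $C^*$-subalgebras (with unit $u_1u_1^*$) generated by the single elements $zu_1u_1^*$ and $u_1zu_1^*$ respectively. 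Both of these elements are self-adjoint; by the first paragraph they commute; so the $*$-algebra they generate is commutative, and therefore so is its norm closure $\mathcal{C}_{00}$.

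The argument is essentially bookkeeping through the preceding lemmas, and no serious obstacle is anticipated. The one step that takes a moment of thought is recognizing that the equivalence of conditions (1) and (4) in Lemma~\ref{AB} applies verbatim to the commutator polynomial, and that the \emph{two} vanishing conditions $p(A,B,I)=0$ and $p(B,A,I)=0$ both follow from the single hypothesis $AB=BA$, thereby converting that hypothesis cleanly into the desired operator identity in $\mathcal{C}_{00}$.
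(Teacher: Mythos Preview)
Your proof is correct and follows essentially the same approach as the paper. The paper argues directly from parts (i)--(iii) of Lemma~\ref{AB} that $P_kS_1S_1^*ZP_k$ and $P_kS_1ZS_1^*P_k$ commute at each level $k>N+1$, whereas you invoke the packaged equivalence (1)$\iff$(4) of that lemma with the commutator polynomial; these are the same argument in different wrapping. Your treatment of the second clause (that $\mathcal{C}_{00}$ is commutative) is more explicit than the paper's, which simply asserts it; note that in this section $Z$ is already positive by construction, so the appeal to Lemma~\ref{ueq} is unnecessary.
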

\begin{proof}
	It follows from Lemma~\ref{AB} that, for every $k>N+1$, $P_kS_1S_1^*ZP_k$ commutes with $P_k S_1 Z S_1^*P_k$ if $A$ and $B$ commutes. Thus $zu_1u_1^*$ commutes with $u_1zu_1^*$.	
	
	 
\end{proof}
\begin{lem}\label{isomAB} Let $A,B$ be as above and $\tilde{A},\tilde{B}$ be two selfadjoint matrices in $M_n(\mathbb{C})$ for some $n\geq 1$ such that, for every polynomial $p(x,y,z)$,
\begin{equation}\label{tilde}
p(A,B,I)=p(B,A,I)=0 \iff p(\tilde{A},\tilde{B},I)=p(\tilde{B},\tilde{A},I)=0.
\end{equation}	
(Possibly $A=\tilde{A}$ and $B=\tilde{B}$).	
	 Then the following are equivalent.
	\begin{enumerate}
		\item [(1)] There is a (unital) isomorphism $\phi:C^*(\tilde{A},\tilde{B},I) \rightarrow \mathcal{C}_{00}$ such that $\phi(\tilde{A})=u_1u_1^*z$ and $\phi(\tilde{B})=u_1zu_1^*$.
		\item[(2)]  There is a (unital) isomorphism $\phi:C^*(\tilde{A},\tilde{B},I) \rightarrow \mathcal{C}_{00}$ such that $\phi(\tilde{B})=u_1u_1^*z$ and $\phi(\tilde{A})=u_1zu_1^*$.
		\item[(3)]  For every polynomial $p(x,y,z)$, $p(\tilde{A},\tilde{B},I)=0$ if and only if $p(\tilde{B},\tilde{A},I)=0$.
	\end{enumerate}
\end{lem}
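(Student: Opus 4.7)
The plan is to use Lemma~\ref{AB} as a translation device between polynomial identities in $\{A,B,I\}$ and polynomial identities in the generating triple $\{u_1u_1^*z,\, u_1zu_1^*,\, u_1u_1^*\}$ of $\mathcal{C}_{00}$, and then to establish the equivalences by proving $(1) \Leftrightarrow (3)$ and, by the same argument with the two generators swapped, $(2) \Leftrightarrow (3)$.

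First I would record two preliminary facts. Since by Lemma~\ref{ueq} we may assume $Z \geq 0$, the element $z = q(Z)$ is self-adjoint, and by Lemma~\ref{comm} the projection $u_1u_1^*$ commutes with $C^*(z)$; hence both $u_1u_1^*z$ and $u_1zu_1^*$ are self-adjoint. Moreover, the identities $(u_1u_1^*z)^k = u_1u_1^*z^k$ and $(u_1zu_1^*)^k = u_1z^ku_1^*$ (the latter using $u_1^*u_1 = I$ from Lemma~\ref{alpha}), together with Lemma~\ref{C00} (in our setting $q = p-1 = 1$), show that $\mathcal{C}_{00}$ is generated, as a unital $C^*$-algebra with unit $u_1u_1^*$, by the two self-adjoint elements $u_1u_1^*z$ and $u_1zu_1^*$.

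For $(3) \Rightarrow (1)$ I would define $\phi$ on noncommutative polynomials by
$$\phi(p(\tilde A,\tilde B,I)) := p(u_1u_1^*z,\, u_1zu_1^*,\, u_1u_1^*).$$
The crucial step is well-definedness: if $p(\tilde A,\tilde B,I) = 0$, then (3) gives $p(\tilde B,\tilde A,I) = 0$, so by (\ref{tilde}) both $p(A,B,I) = 0$ and $p(B,A,I) = 0$, and then Lemma~\ref{AB}, implication $(1) \Rightarrow (4)$, yields $p(u_1u_1^*z, u_1zu_1^*, u_1u_1^*) = 0$. Since $C^*(\tilde A,\tilde B,I) \subseteq M_n(\mathbb{C})$ is finite dimensional, this defines a unital $*$-homomorphism on the whole algebra, automatically continuous and with closed range. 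Injectivity comes from reversing the chain using the implication $(4) \Rightarrow (1)$ of Lemma~\ref{AB} and (\ref{tilde}) again, while surjectivity is immediate from the preliminary generation fact. The implication $(3) \Rightarrow (2)$ is the mirror image, using the equivalence $(1) \Leftrightarrow (5)$ of Lemma~\ref{AB} and exchanging the two generators.

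For $(1) \Rightarrow (3)$ I would chase equivalences: with $\phi$ as in (1), the injectivity of $\phi$ together with Lemma~\ref{AB} gives
$$p(\tilde A,\tilde B,I) = 0 \iff p(u_1u_1^*z, u_1zu_1^*, u_1u_1^*) = 0 \iff p(A,B,I) = p(B,A,I) = 0,$$
and (\ref{tilde}) rewrites the right-hand condition as $p(\tilde A,\tilde B,I) = p(\tilde B,\tilde A,I) = 0$, yielding $p(\tilde A,\tilde B,I) = 0 \Rightarrow p(\tilde B,\tilde A,I) = 0$. Applying the same chain to $q(x,y,z) := p(y,x,z)$ supplies the reverse implication, hence (3); the derivation $(2) \Rightarrow (3)$ is symmetric. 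I expect the main, though not difficult, obstacle to be the preliminary verification that $u_1u_1^*z,\, u_1zu_1^*,\, u_1u_1^*$ really generate $\mathcal{C}_{00}$ as a unital $C^*$-algebra; once that is in hand, every other step is essentially forced by Lemma~\ref{AB} combined with the symmetric form of hypothesis (\ref{tilde}).
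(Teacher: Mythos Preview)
Your proposal is correct and follows essentially the same approach as the paper: both arguments define $\phi$ on polynomials, use finite-dimensionality of $C^*(\tilde A,\tilde B,I)$ to reduce to checking that $p(\tilde A,\tilde B,I)=0 \iff p(u_1u_1^*z,u_1zu_1^*,u_1u_1^*)=0$, and invoke Lemma~\ref{AB} together with hypothesis~(\ref{tilde}) for this. The only cosmetic difference is that in the direction $(1)\Rightarrow(3)$ the paper swaps the generators directly via the equivalence $(4)\Leftrightarrow(5)$ of Lemma~\ref{AB} and then applies $\phi^{-1}$, whereas you detour through $A,B$ via $(1)\Leftrightarrow(4)$ and then use~(\ref{tilde}); both routes yield the same conclusion.
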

\begin{proof} We prove the equivalence of (1) and (3). The proof of the equivalence of (2) and (3) is similar (or follows by symmetry).
	 
To prove that (1) implies (3), assume that (1) holds. If $p(\tilde{A},\tilde{B},I)=0$ then it follows from (1) that $p(u_1u_1^*z,u_1zu_1^*,u_1u_1^*)=0$. But, using the equivalence of (4) and (5) in Lemma~\ref{AB}, we get $p(u_1zu_1^*,u_1u_1^*z,u_1u_1^*)=0$ and, using (1) again, we get $p(\tilde{B},\tilde{A},I)=0$. A similar argument proves the converse.	

Now assume (3). Since $C^*(\tilde{A},\tilde{B},I)$ is finite dimensional and $\tilde{A},\tilde{B}$ are selfadjoint, what we need to show is that such $\phi$ is well defined and injective (surjectivity is clear). But this means that, for every polynomial $p$, $p(\tilde{A},\tilde{B},I)=0$ (and, therefore, by (3), also $p(\tilde{B},\tilde{A},I)=0$) if and only if $p(u_1u_1^*z,u_1zu_1^*,I)=0$.  But, $p(\tilde{A},\tilde{B},I)=0$ if and only if $p(\tilde{A},\tilde{B},I)=p(\tilde{B},\tilde{A},I)=0$ (by (3)) and, by (\ref{tilde}), this is equivalent to $p(A,B,I)=p(B,A,I)=0$. By  Lemma~\ref{AB}, this is equivalent to  $p(u_1u_1^*z,u_1zu_1^*,I)=0$.
	
	\end{proof}

\begin{cor}\label{findim}
	In the setup of this section, $\mathcal{C}_{00}$ is a finite dimensional $C^*$-algebra.
\end{cor}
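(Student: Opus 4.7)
The plan is to invoke Lemma~\ref{isomAB} with a concrete choice of $(\tilde{A},\tilde{B})$ that manifestly lives in a finite dimensional matrix algebra. Specifically, I would set
\[
\tilde{A}:=A\oplus B,\qquad \tilde{B}:=B\oplus A,
\]
both viewed as positive invertible selfadjoint matrices in $M_{2\cdot 2^N}(\mathbb{C})$. Then $C^*(\tilde{A},\tilde{B},I)$ is a unital $C^*$-subalgebra of $M_{2\cdot 2^N}(\mathbb{C})$ and is therefore automatically finite dimensional; transporting this along an isomorphism to $\mathcal{C}_{00}$ will finish the job.

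The first thing to verify is that this $(\tilde{A},\tilde{B})$ satisfies both the standing hypothesis \eqref{tilde} of Lemma~\ref{isomAB} and its condition (3). Both follow at once from the block-diagonal identities
\[
p(\tilde{A},\tilde{B},I)=p(A,B,I)\oplus p(B,A,I),\qquad p(\tilde{B},\tilde{A},I)=p(B,A,I)\oplus p(A,B,I),
\]
valid for every noncommutative polynomial $p(x,y,z)$. The first identity gives $p(\tilde{A},\tilde{B},I)=0$ iff $p(A,B,I)=p(B,A,I)=0$ (and similarly for the second), which is exactly \eqref{tilde}; comparing the two identities yields $p(\tilde{A},\tilde{B},I)=0$ iff $p(\tilde{B},\tilde{A},I)=0$, which is condition (3).

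With these in place, statement (1) of Lemma~\ref{isomAB} supplies a unital $*$-isomorphism $\phi:C^*(\tilde{A},\tilde{B},I)\to\mathcal{C}_{00}$ with $\phi(\tilde{A})=u_1u_1^*z$ and $\phi(\tilde{B})=u_1zu_1^*$. Since the domain is finite dimensional, so is $\mathcal{C}_{00}$.

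There is essentially no obstacle at this stage: all of the real work has already been absorbed into Lemmas~\ref{AB} and \ref{isomAB}. The only modest point is to exhibit a pair $(\tilde{A},\tilde{B})$ for which the symmetric relation of Lemma~\ref{isomAB}(3) holds automatically and which sits in a finite dimensional ambient algebra, and the direct-sum swap $(A\oplus B,\,B\oplus A)$ achieves both for free.
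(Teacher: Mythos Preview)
Your proof is correct and follows exactly the paper's approach: the paper also takes $\tilde{A}=A\oplus B$ and $\tilde{B}=B\oplus A$, notes that condition~(3) of Lemma~\ref{isomAB} holds, and concludes $\mathcal{C}_{00}\cong C^*(\tilde{A},\tilde{B},I)$. You simply spell out in more detail why the block-diagonal identities give both \eqref{tilde} and condition~(3).
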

\begin{proof}
Take $\tilde{A}=A\oplus B$ and $\tilde{B}=B\oplus A$. They satisfy (3) of Lemma~\ref{isomAB} and it follows that $\mathcal{C}_{00}\cong C^*(\tilde{A},\tilde{B},I)$.	
	\end{proof}

\begin{example}\label{ex1}
Let $N=1$  and $A,B$ the two positive $2\times 2$ matrices 
 $B=\left( \begin{array}{cc} 2 & 1 \\ 1 & 2 \end{array} \right) $ and  $A=\left( \begin{array}{cc} 3 & 0 \\ 0 & 1 \end{array} \right) $ . We will show that Condition (3) of Lemma~\ref{isomAB} holds and $C^*(A,B,I)=M_2$, consequently, $\mathcal{C}_{00}\cong M_2$ and $q(\mathcal{D})\cong M_{2^{\infty}}$ and it has no non trivial ideals.


To see this, set $W=\frac{1}{\sqrt{2}}\left( \begin{array}{cc} 1 & 1  \\ 1 & -1 \end{array} \right) $. Then $W$ is a selfadjoint unitary and a simple computation shows that $WAW^*=B$ and $WBW^*=A$. Thus, for a polynomial $p(x,y,z)$, $Wp(A,B,I)W^*=p(B,A,I)$ and condition (3) of Lemma~\ref{isomAB} follows. Thus, there is a (unital) isomorphism $\phi:C^*(A,B,I) \rightarrow \mathcal{C}_{00}$ such that $\phi(A)=u_1u_1^*z$ and $\phi(B)=u_1zu_1^*$.

Now, write $\mathcal{A}=C^*(A,B,I)\subseteq M_2$ and let $e_{i,j}$ be the matrix units in $M_2$. Then $e_{1,1}=\frac{1}{2}(A-I)\in \mathcal{A}$, $e_{2,2}=I-e_{1,1}\in \mathcal{A}$, $e_{1,2}=e_{1,1}(B-2I)\in \mathcal{A}$ and $e_{2,1}=e_{1,2}^*$. Therefore $C^*(A,B,I)=M_2$.
\end{example}

\begin{example}\label{ex2}
Let $N=1$,  $A=\left( \begin{array}{cc} 1 & 0 \\ 0 & 2 \end{array} \right) $ and  $B=\left( \begin{array}{cc} 2 & 0 \\ 0 & 1 \end{array} \right) $ . Condition (3) of Lemma~\ref{isomAB} clearly holds and, thus, $\mathcal{C}_{00}\cong \mathbb{C}^2$. In fact,  there is a (unital) isomorphism $\phi:C^*(A,B,I) \rightarrow \mathcal{C}_{00}$ such that $\phi(A)=u_1u_1^*z$ and $\phi(B)=u_1zu_1^*$.  Consequently, $q(\mathcal{D})\cong M_{2^{\infty}}\otimes \mathbb{C}^2$ and it has $2$ non trivial ideals. We will show that these ideals are not invariant, so, it does not have a non trivial invariant ideals.

Write $\{e_1,e_2\}$ for the standard basis of $\mathbb{C}^2$. Then $\mathbb{C}^2$ has 2 non trivial ideals $J_1=\mathbb{C}e_1$ and $J_2=\mathbb{C}e_2$. Since $\phi(e_1)=\phi(-\frac{1}{3}A+\frac{2}{3}B)=-\frac{1}{3}u_1u_1^*z+\frac{2}{3}u_1zu_1^*$ and  $\phi(e_2)=\phi(\frac{2}{3}A-\frac{1}{3}B)=\frac{2}{3}u_1u_1^*z-\frac{1}{3}u_1zu_1^*$, the corresponding ideals in $M_2\otimes \mathcal{C}_{00}$ are $M_2\otimes (2u_1zu_1^*-u_1u_1^*z)$ and $M_2\otimes (2u_1u_1^*z-u_1zu_1^*)$. By applying $\tau^{-1}$, we get, in $\mathcal{C}_{0}$, the ideals
$$J_1=\tau^{-1}(M_2\otimes (2u_1zu_1^*-u_1u_1^*z))=span_{k,l} \tau^{-1}(e_{k,l}\otimes (2u_1zu_1^*-u_1u_1^*z))$$  $$=span_{k,l} u_ku_1^*(2u_1zu_1^*-u_1u_1^*z)u_1u_l^* = span_{k,l} (2u_k z u_l^*-u_ku_l^*z) $$ and  
$$J_2=\tau^{-1}(M_2\otimes (2u_1u_1^*z-u_1zu_1^*))=span_{k,l} \tau^{-1}(e_{k,l}\otimes (2u_1u_1^*z-u_1zu_1^*))$$  $$=span_{k,l} u_ku_1^*(2u_1u_1^*z-u_1zu_1^*)u_1u_l^* = span_{k,l} (2u_k  u_l^*z-u_kzu_l^*) $$.
Now, fix $1\leq m,j \leq 2$ and consider $u_m^*J_iu_j$. Note that $u_m^*\mathcal{C}_0 u_j\subseteq \mathcal{C}_0$. In fact, $u_m^*u_ku_l^*u_j=\delta_{k,m}\delta_{j,l}I$, $u_m^*u_ku_l^*zu_j=\delta_{k,m}u_l^*zu_j=\delta_{k,m}\Sigma_{n,t}u_l^*u_tu_nzu_n^*u_t^*u_j=\delta_{k,m} \delta_{j,l} \Sigma_n u_nzu_n^*$ and $u_m^*u_kzu_l^*u_j=\delta_{m,k}\delta_{j,l} z$.

Thus $$u_m^*J_1u_j=span_{k,l}(2\delta_{k,m} \delta_{j,l} \Sigma_n u_nzu_n^*-\delta_{m,k}\delta_{j,l} z)=\mathbb{C}(2z-\Sigma_n u_nzu_n^*)$$ and
$$u_m^*J_2u_j=span_{k,l}(2\delta_{k,m} \delta_{j,l} \Sigma_n u_nzu_n^*-\delta_{m,k}\delta_{j,l} z)=\mathbb{C}(2\Sigma_n u_nzu_n^* -z).$$
In order to apply $\tau$, we note that  $\tau(z)=I\otimes zu_1u_1^*$ and $\tau(\Sigma_n u_nzu_n^*)=I\otimes u_1zu_1^* $. Thus
$$\tau(u_m^*J_1u_j)=\mathbb{C}(I\otimes(2z u_1u_1^*-u_1zu_1^*))\nsubseteq M_2\otimes (2u_1zu_1^*-zu_1u_1^*)=\tau(J_1)$$
$$\tau(u_m^*J_2u_j)=\mathbb{C}(I\otimes (2u_1zu_1^*-zu_1u_1^*))\nsubseteq M_2\otimes (2u_1u_1^*z-u_1zu_1^*)=\tau(J_2).$$ 
Thus, for every ideal $J$ of $\mathcal{C}_0$ and every $m,j$, $u_m^*Ju_j\nsubseteq J$.

\end{example}

\begin{example}\label{ex3}
	Let $N=1$,  $A=3I $ and  $B=\left( \begin{array}{cc} 3 & 0 \\ 0 & 1 \end{array} \right) $ . Condition (3) of Lemma~\ref{isomAB} does not hold for $A,B$ but it does hold for $\tilde{A},\tilde{B}$ where
	$$ \tilde{A} =\left( \begin{array}{ccc} 3 & 0 & 0 \\ 0 & 3 & 0 \\ 0 & 0 & 1 \end{array} \right), \tilde{B} =\left( \begin{array}{ccc} 3 & 0 & 0 \\ 0 & 1 & 0 \\ 0 & 0 & 3 \end{array} \right) $$  and, thus, $\mathcal{C}_{00}\cong \mathbb{C}^3$. In fact,  there is a (unital) isomorphism $\phi:C^*(\tilde{A},\tilde{B},I) \rightarrow \mathcal{C}_{00}$ such that $\phi(\tilde{A})=u_1u_1^*z$ and $\phi(\tilde{B})=u_1zu_1^*$.  Consequently, $q(\mathcal{D})\cong M_{2^{\infty}}\otimes \mathbb{C}^3$.
	
	Each non trivial ideal in $\mathbb{C}^3$ is $\mathbb{C}e_i+\mathbb{C}e_j$ for some $1\leq i,j \leq 3$. So we now consider the ideals $\mathbb{C}e_i$ in $\mathbb{C}^3$. Since $\phi(e_1)=\frac{1}{2}\phi(\tilde{A}+\tilde{B}-4I)=\frac{1}{2}(u_1u_1^*z+u_1zu_1^*-4u_1u_1^*)$, $\phi(e_2)=\frac{1}{2}(3I-\tilde{B})=\frac{1}{2}(3u_1u_1^*-u_1zu_1^*)$ and $\phi(e_3)=\frac{1}{2}\phi(3I-\tilde{A})=\frac{1}{2}(3u_1u_1^*-u_1u_1^*z)$, the corresponding ideals in $M_2\otimes \mathcal{C}_{00}$ are $M_2\otimes (u_1u_1^*z+u_1zu_1^*-4u_1u_1^*)$, $M_2\otimes (3u_1u_1^*-u_1zu_1^*)$ and $M_2\otimes (3u_1u_1^*-u_1u_1^*z)$. By applying $\tau^{-1}$, we get the corresponding ideals in $\mathcal{C}_0$ and we write
	$$ J_1=\tau^{-1}(M_2\otimes (u_1u_1^*z+u_1zu_1^*-4u_1u_1^*))=span_{k,l} \tau^{-1}(e_{k,l}\otimes  (u_1u_1^*z+u_1zu_1^*-4u_1u_1^*))$$  $$=span_{k,l} u_ku_1^* (u_1u_1^*z+u_1zu_1^*-4u_1u_1^*)u_1u_l^*=span_{k,l}(u_ku_l^*z+u_kzu_l^*-4u_ku_l^*).$$
	$$ J_2=\tau^{-1}(M_2\otimes (3u_1u_1^*-u_1zu_1^*))=span_{k,l} \tau^{-1}(e_{k,l}\otimes  (3u_1u_1^*-u_1zu_1^*))$$  $$=span_{k,l} u_ku_1^* (3u_1u_1^*-u_1zu_1^*)u_1u_l^*=span_{k,l}(3u_ku_l^*-u_kzu_l^*).$$
	$$ J_3=\tau^{-1}(M_2\otimes (3u_1u_1^*-u_1u_1^*z))=span_{k,l} \tau^{-1}(e_{k,l}\otimes  (3u_1u_1^*-u_1u_1^*z))$$  $$=span_{k,l} u_ku_1^* (3u_1u_1^*-u_1u_1^*z)u_1u_l^*=span_{k,l}(3u_ku_l^*-u_ku_l^*z).$$
	Now, fix $1\leq m,j \leq 2$ and consider $u_m^*J_iu_j$. Note that $u_m^*\mathcal{C}_0 u_j\subseteq \mathcal{C}_0$. In fact, $u_m^*u_ku_l^*u_j=\delta_{k,m}\delta_{j,l}I$, $u_m^*u_ku_l^*zu_j=\delta_{k,m}u_l^*zu_j=\delta_{k,m}\Sigma_{n,t}u_l^*u_tu_nzu_n^*u_t^*u_j=\delta_{k,m} \delta_{j,l} \Sigma_n u_nzu_n^*$ and $u_m^*u_kzu_l^*u_j=\delta_{m,k}\delta_{j,l} z$.
	
	Thus, $$u_m^*J_1u_j=span_{k,l}(\delta_{k,m} \delta_{j,l} \Sigma_n u_nzu_n^*+\delta_{m,k}\delta_{j,l} z-\delta_{m,k}\delta_{j,l}I)=\mathbb{C} (  \Sigma_n u_nzu_n^*+ z-4I).$$
	$$u_m^*J_2u_j=span_{k,l}(\delta_{k,m}\delta_{j,l}3I-\delta_{m,k}\delta_{j,l} z)=\mathbb{C}(3I-z).$$
	$$u_m^*J_3u_j=span_{k,l}(\delta_{k,m}\delta_{j,l}3I-\delta_{k,m} \delta_{j,l} \Sigma_n u_nzu_n^*)=\mathbb{C}(3I-\Sigma_n u_nzu_n^*). $$
	In order to apply $\tau$, we note that $\tau(I)=I\otimes u_1u_1^*$, $\tau(z)=I\otimes zu_1u_1^*$ and $\tau(\Sigma_n u_nzu_n^*)=I\otimes u_1zu_1^* $. Thus
	$$\tau(u_m^*J_1u_j)=\mathbb{C}(I\otimes( u_1zu_1^*+zu_1u_1^*-4u_1u_1^*))\subseteq M_2\otimes (u_1u_1^*z+u_1zu_1^*-4u_1u_1^*)$$
	$$\tau(u_m^*J_2u_j)=\mathbb{C}(I\otimes (3u_1u_1^*-zu_1u_1^*))\nsubseteq M_2\otimes (3u_1u_1^*-u_1zu_1^*)$$ and
	$$\tau(u_m^*J_3u_j)=\mathbb{C}(I\otimes (3u_1u_1^*-u_1zu_1^*))\nsubseteq M_2 \otimes (3u_1u_1^*-u_1u_1^*z) .$$
	Thus, for every $m,j$, $u_m^*J_1u_j\subseteq J_1$ but $u_m^*J_2u_j\nsubseteq J_2$ and $u_m^*J_3u_j\nsubseteq J_3$

	\end{example}

\section{Simplicity}
 We assume here that Condition A(p) holds for some $p>0$. 
 In order to discuss the simplicity of the $C^*$-algebra $\mathcal{T}(\mathbb{C}^d,Z)/\mathcal{K}$ in this case, we shall need to analyse the ideals of $q(\cald)$.
 
 


 
 \begin{prop}\label{invariantideals}
 	The following statements are equivalent.
 	\begin{enumerate}
 		\item[(1)] There is a non trivial ideal $J$ in $q(\cald)$ such that, for every $1\leq i,j \leq d$, $u_i^*Ju_j\subseteq J$.
 		\item[(2)] There is a non trivial ideal $J_0$ in $\mathcal{C}_0$ such that  for every $1\leq i,j \leq d$, $u_i^*J_0u_j\subseteq J_0$.
 		\item[(3)] There is a non trivial  ideal $J_{00}$ in $\mathcal{C}_{00}$ such that, for every $1\leq i,j \leq d$, $\tau(u_i^* \tau^{-1}(M_{d^q}\otimes J_{00}) u_j)\subseteq M_{d^q}\otimes  J_{00}$ where $\tau:\mathcal{C}_0 \rightarrow M_{d^q}\otimes \mathcal{C}_{00}$ is the isomorphism (see the discussion preceeding Lemma~\ref{C00}).
 	\end{enumerate}
 \end{prop}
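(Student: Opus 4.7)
The plan is to establish (2) $\Leftrightarrow$ (3) directly from the structure of $\tau$, and then to prove (1) $\Leftrightarrow$ (2) using the direct limit decomposition of $q(\cald)$. For the first equivalence, since $\tau:\mathcal{C}_0 \to M_{d^q} \otimes \mathcal{C}_{00}$ is an isomorphism and $M_{d^q}$ is simple, every closed two-sided ideal of $M_{d^q} \otimes \mathcal{C}_{00}$ has the form $M_{d^q} \otimes J_{00}$ for a unique closed ideal $J_{00}$ of $\mathcal{C}_{00}$, with properness preserved. The invariance condition in (3) is by construction the translation through $\tau$ of the one in (2), so (2) $\Leftrightarrow$ (3) is immediate.

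For (1) $\Rightarrow$ (2), the natural candidate is $J_0 := J \cap \mathcal{C}_0$. Its invariance follows from the invariance of $J$ together with Lemma~\ref{maps}(5): $u_i^* J_0 u_j \subseteq (u_i^* J u_j) \cap (u_i^* \mathcal{C}_0 u_j) \subseteq J \cap \mathcal{C}_0 = J_0$. That $J_0 \neq \mathcal{C}_0$ is immediate since $I \in \mathcal{C}_0 \setminus J$. To see $J_0 \neq \{0\}$, note that if $J \cap \mathcal{C}_n = \{0\}$ for every $n$, the quotient map $q(\cald) \to q(\cald)/J$ would be isometric on each $\mathcal{C}_n$, hence on the dense union $\bigcup_n \mathcal{C}_n$, and so on all of $q(\cald)$, forcing $J = 0$. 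Therefore some $b \in J \cap \mathcal{C}_n$ is non-zero; writing $\phi_n^{-1}(b) = \sum_{|\alpha|=|\beta|=np} e_{\alpha,\beta} \otimes a_{\alpha,\beta}$ with some $a_{\alpha_0, \beta_0} \neq 0$, iterating the invariance $u_i^* J u_j \subseteq J$ along the letters of $\alpha_0$ and $\beta_0$ yields $a_{\alpha_0, \beta_0} = u_{\alpha_0}^* b u_{\beta_0} \in J \cap \mathcal{C}_0 = J_0$.

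For (2) $\Rightarrow$ (1), let $J_{00}$ be the ideal of $\mathcal{C}_{00}$ corresponding to $J_0$, and set $J_n := \phi_n(M_{d^{np}} \otimes J_0) \subseteq \mathcal{C}_n$. The key step is $\iota_n(J_n) \subseteq J_{n+1}$: after the identification $M_{d^{np}} \otimes \mathcal{C}_0 \cong M_{d^{np+q}} \otimes \mathcal{C}_{00}$ the ideal $M_{d^{np}} \otimes J_0$ becomes $M_{d^{np+q}} \otimes J_{00}$, and Lemma~\ref{psintilde} shows that $\tilde{\psi}_n$ acts as the identity on the $\mathcal{C}_{00}$ factor, so it maps this ideal into $M_{d^{(n+1)p+q}} \otimes J_{00}$. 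One then sets $J := \overline{\bigcup_n J_n}$, which is a closed two-sided ideal of $q(\cald)$. Invariance of $J$ is checked on generators: using $I = \sum_\eta u_\eta u_\eta^*$ to rewrite, one obtains
\begin{equation*}
u_i^*\phi_n(e_{\alpha, \beta} \otimes c)\,u_j = \delta_{\alpha_1, i}\delta_{\beta_1, j}\,\phi_n\!\Bigl(\sum_{\eta, \chi} e_{\alpha' \eta, \beta' \chi} \otimes (u_\eta^* c u_\chi)\Bigr),
\end{equation*}
for $c \in J_0$, $|\alpha|=|\beta|=np$, $\alpha = \alpha_1 \alpha'$, $\beta = \beta_1 \beta'$; each $u_\eta^* c u_\chi$ lies in $J_0$ by the assumed invariance, so the right side is in $J_n$. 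Non-triviality of $J$ is immediate from Corollary~\ref{UHFC00}: under $q(\cald) \cong M_{d^\infty} \otimes \mathcal{C}_{00}$ the ideal $J$ corresponds to $M_{d^\infty} \otimes J_{00}$, which is proper exactly because $J_{00}$ is.

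The main obstacle is the compatibility $\iota_n(J_n) \subseteq J_{n+1}$. Working directly from the formula for $\psi_n$ in Lemma~\ref{maps}(4), one meets sums of the form $\sum_{|\delta|=|\tau|} u_\delta z^l u_\delta^*$, and it is not transparent at the level of $\mathcal{C}_0$ that these remain in $J_0$ whenever a single ``summand'' $u_\tau z^l u_\sigma^*$ does. Lemma~\ref{psintilde} sidesteps this entirely by recasting the connecting map, through $\mathcal{C}_{00}$, as tensoring by an identity on a matrix factor; after that, preservation of ideals is automatic.
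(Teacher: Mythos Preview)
Your proof is correct and follows the same overall architecture as the paper's (reduce (2)$\Leftrightarrow$(3) to the ideal correspondence under $\tau$, and handle (1)$\Leftrightarrow$(2) via the direct limit $q(\cald)=\overline{\cup_n\mathcal{C}_n}$), but two of your sub-arguments differ in a way worth noting.

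For (1)$\Rightarrow$(2) you take $J_0=J\cap\mathcal{C}_0$ directly and get invariance in one line from Lemma~\ref{maps}(5); the paper instead picks some $n$ with $J\cap\mathcal{C}_n\neq 0$, transports this to an ideal $J_0\subseteq\mathcal{C}_0$ via $\phi_n$, and then must run a separate, somewhat delicate argument (conjugating by $u_iu_{\alpha}$ and $u_{\beta}^*u_j^*$ and using $u_iu_i^*,u_ju_j^*\in\mathcal{C}_0$) to establish $u_i^*J_0u_j\subseteq J_0$. Your route is shorter and more transparent.

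For (2)$\Rightarrow$(1), to get the compatibility $\iota_n(J_n)\subseteq J_{n+1}$ you invoke Lemma~\ref{psintilde}, which makes the preservation of ideals automatic since $\tilde{\psi}_n$ is the identity on the $\mathcal{C}_{00}$ tensor factor. The paper instead verifies $J_n\subseteq J_{n+1}$ by a direct computation: writing $b=\Sigma u_{\alpha}a_{\alpha,\beta}u_{\beta}^*\in J_n$ and inserting $\Sigma_{|\gamma|=|\delta|=p}u_{\gamma}u_{\gamma}^*(\cdot)u_{\delta}u_{\delta}^*$, the invariance hypothesis on $J_0$ gives $u_{\gamma}^*a_{\alpha,\beta}u_{\delta}\in J_0$. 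Your use of Lemma~\ref{psintilde} here is more conceptual and explains \emph{why} the compatibility holds, while the paper's argument is self-contained. You also make explicit (via Corollary~\ref{UHFC00}) that $J$ is proper, a point the paper leaves implicit.
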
 
\begin{proof} We start by assuming (2) and write $J_0$ for a non trivial ideal in $\mathcal{C}_0$ such that  for every $1\leq i,j \leq d$, $u_i^*J_0u_j\subseteq J_0$. Then $\tau(J_0)$ is a non trivial ideal of $M_{d^q}\otimes \mathcal{C}_{00}$. We let $J_{00}$ be the (non trivial) ideal in $\mathcal{C}_{00}$ such that $\tau(J_0)=M_{d^q}\otimes J_{00}$. It is then clear that $J_{00}$ satisfies (3). Conversely, given $J_{00}$ as in (3), we set $J_0:=\tau^{-1}(M_{d^q}\otimes J_{00})$ and get (2). Thus (2) and (3) are equivalent.
	
	So it is enough to prove that (1) and (2) are equivalent.
Assume (1) holds and $J$ is a non trivial ideal in $q(\cald)$ such that, for every $0\leq i,j \leq d$, $u_i^*Ju_j\subseteq J$. Then, for every $n$, $J\cap \mathcal{C}_n$ is an ideal in $\mathcal{C}_n$ such that,  for every $0\leq i,j \leq d$, $u_i^*(J\cap \mathcal{C}_n)u_j\subseteq J\cap \mathcal{C}_n$. Since $J=\overline{\cup_n ( J\cap\mathcal{C}_n) }$, there is some $n$ such that $J\cap \mathcal{C}_n$ is a non trivial ideal of $\mathcal{C}_n$. Fix such $n$ and write $J_n=J\cap \mathcal{C}_n$. 
Since, for every $i,j$, $u_i^*\mathcal{C}_n u_j\subseteq \mathcal{C}_n$ (Lemma~\ref{maps}(5)), we also have $$u_i^*J_n u_j\subseteq J_n.$$ By applying this $pn$ times we find that, for every $\alpha_0 ,\beta_0$ such that $|\alpha_0|=|\beta_0|=pn$, $$u_{\alpha_0}^*J_nu_{\beta_0}\subseteq J_n.$$

 Since $\phi_n:M_{d^{pn}}\otimes \mathcal{C}_0\rightarrow \mathcal{C}_n$ (as in Lemma~\ref{maps}) is an isomorphism, there is a non trivial ideal $J_0$ in $\mathcal{C}_0$ such that 
 $$J_n=\phi_n (M_{d^{pn}}\otimes J_0)=\Sigma_{|\alpha|=|\beta|=pn}u_{\alpha}J_0u_{\beta}^*.$$
 Fix $\alpha_0,\beta_0$ such that $|\alpha_0|=|\beta_0|=pn$ and then  $$u_{\alpha_0}^*J_nu_{\beta_0}=u_{\alpha_0}^*(\Sigma_{|\alpha|=|\beta|=pn}u_{\alpha}J_0u_{\beta}^*)u_{\beta_0}=J_0. $$ But, since $u_{\alpha_0}^*J_nu_{\beta_0}\subseteq J_n\subseteq J$, we find that $J_0\subseteq J$ and, as $J$ is an ideal, \begin{equation}\label{alphajbeta}
 u_{\alpha_0}J_0u_{\beta_0}^*\subseteq J.\end{equation}
 
 We want to show that, for every $i,j$ and $a\in J_0$, $u_i^*au_j\in J_0$. So we fix $i,j$ and $a\in J_0$. Take $\alpha',\beta'$ with $|\alpha'|=|\beta'|=np-1$ and write $\alpha=\alpha' i$ and $\beta=\beta' j$. Then $u_i u_{\alpha} u_i^*au_j u_{\beta}^*u_j^*=u_{i \alpha'}(u_iu_i^*)a(u_j u_j^*)u_{\beta'}^*u_j^*$. Since $u_iu_i^*$ and $u_ju_j^*$ lie in $\mathcal{C}_0$, $(u_iu_i^*)a(u_j u_j^*)\in J_0$ and, thus, using (\ref{alphajbeta}) for $\alpha_0=i\alpha$ and $\beta_0=j\beta$, $u_i u_{\alpha} u_i^*au_j u_{\beta}^*u_j^*\in J$. But, then, from the invariance property of $J$, $u_i^*(u_i u_{\alpha} u_i^*au_j u_{\beta}^*u_j^*)u_j\in J$. But this is  $u_{\alpha} u_i^*au_j u_{\beta}^*=\phi_n(e_{\alpha,\beta}\otimes u_i^*au_j)$. Therefore $u_i^*au_j\in J_0$. Since $i,j$ are arbitrary and $a$ is an arbitrary element of $J_0$, (2) follows.

Now assume (2), that is, there is a non trivial ideal $J_0$ in $\mathcal{C}_0$ such that  for every $0\leq i,j \leq d$, $u_i^*J_0u_j\subseteq J_0$. For every $n\geq 1$, let $J_n$ be the	ideal in $\mathcal{C}_n$ defined by  $J_n:=\phi_n(M_{d^{pn}}\otimes J_0)$. Note first that $J_n\subseteq J_{n+1}$. If $b\in J_n$ then $b=\phi_n(\Sigma_{|\alpha|=|\beta|=pn}e_{\alpha,\beta}\otimes a_{\alpha,\beta})=\Sigma_{|\alpha|=|\beta|=pn}u_{\alpha}a_{\alpha,\beta}u_{\beta}^*$ for  $a_{\alpha,\beta}\in J_0$. But then
$$b=\Sigma_{|\alpha|=|\beta|=pn,|\gamma|=|\delta|=p}u_{\alpha \gamma}u_{\gamma}^*a_{\alpha,\beta}u_{\delta}u_{\beta \delta}^*$$
Applying successively the invariance assumption of $J_0$, we get $u_{\gamma}^*a_{\alpha,\beta}u_{\delta}\in J_0$. Thus 
$$b=\Sigma_{|\alpha|=|\beta|=pn,|\gamma|=|\delta|=p}u_{\alpha \gamma}c_{\alpha,\gamma,\beta,\delta}u_{\beta \delta}^*$$ for some $c_{\alpha,\gamma,\beta,\delta}\in J_0$. It follows that $b\in J_{n+1}$.

Now, we write $J=\overline{\cup_n J_n}$ and $J$ is an ideal in $q(\cald)$. To show that $J$ has the invariance property of (1), it suffices to show that, for every $n,i,j$, $u_i^*J_nu_j\subseteq J_n$.

So, we fix $i,j,n$ and $b\in J_n$. Then $b=\Sigma_{|\alpha|=|\beta|=pn}u_{\alpha}a_{\alpha,\beta}u_{\beta}^*$ for  $a_{\alpha,\beta}\in J_0$ and
$$u_i^*bu_j=\Sigma_{|\alpha|=|\beta|=pn}u_i^*u_{\alpha}a_{\alpha,\beta}u_{\beta}^*u_j.$$ 
We can assume that all $\alpha,\beta$ appearing in this sum satisfy $\alpha=i\alpha'$ and $\beta=j \beta'$ for some $\alpha',\beta'$ with $|\alpha'|=|\beta'|=pn-1$ (otherwise, the corresponding summand is $0$). Thus, we can write $u_i^*u_{\alpha}=u_{\alpha'}$ and $u_{\beta}^*u_j=u_{\beta'}^*$ and get
$$u_i^*bu_j=\Sigma_{|\alpha'|=|\beta'|=pn-1}u_{\alpha'}a_{\alpha,\beta}u_{\beta'}^*=
\Sigma_{|\alpha'|=|\beta'|=pn-1,k,l}u_{\alpha'}u_ku_k^*a_{\alpha,\beta}u_lu_l^*u_{\beta'}^*$$  $$=\Sigma_{|\alpha'|=|\beta'|=pn-1,k,l}u_{\alpha'k}(u_k^*a_{\alpha,\beta}u_l)u_{\beta'l}^*.$$  Since $ u_k^*a_{\alpha,\beta}u_l\in J_0$ (as we assume that (2) holds) and $|\alpha' k|=|\beta' l|=pn$, $u_i^*bu_j\in J_n$ and this proves (1).
	
	\end{proof}

In order to discuss the simplicity of the algebra $\mathcal{T}(\mathbb{C}^d,Z)/\mathcal{K}$ we need first the following definition.

\begin{defn}
Let $X$ be a $C^*$-correspondence over a unital $C^*$-algebra $A$. We say that it is \emph{minimal} if there are no non trivial ideals $J\subseteq A$ such that 
$\langle X, JX \rangle \subseteq J$. It is said to be \emph{nonperiodic} if $X^{\otimes n}$ and $A$ are isometric (that is, there is a unitary map from $X^{\otimes n}$ onto $A$) only if $n=0$. 	
	
	\end{defn}

The following theorem was proved by J. Schweizer \cite[Theorem 3.9]{Sch01}.

\begin{thm}\label{simpleCP}
Let $X$ be a full $C^*$-correspondence over a unital $C^*$-algebra $A$. Then the Cuntz-Pimsner algebra $\mathcal{O}(X,A)$ is simple if and only if $X$ is minimal and nonperiodic.
\end{thm}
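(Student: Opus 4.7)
The plan is to prove the theorem by combining the gauge-invariant uniqueness theorem for Cuntz--Pimsner algebras with a careful analysis of how ideals of $\mathcal{O}(X,A)$ intersect the coefficient algebra $A$. The basic tool is the circle action $\gamma = \{\gamma_t\}$ on $\mathcal{O}(X,A)$ given by $\gamma_t(a) = a$ for $a \in A$ and $\gamma_t(\xi) = e^{it}\xi$ for $\xi \in X$, together with the faithful conditional expectation $\Phi = \frac{1}{2\pi}\int_0^{2\pi} \gamma_t \, dt$ onto the fixed-point algebra $\mathcal{O}(X,A)^\gamma$, which is the inductive limit of the algebras $K(X^{\otimes n})$ (via the Katsura ideal).

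For the necessity direction, I would argue contrapositively. If a non-trivial ideal $J \subseteq A$ satisfies $\langle X, JX\rangle \subseteq J$, then $J$ generates a non-trivial two-sided ideal $\widetilde{J}$ in $\mathcal{O}(X,A)$; the invariance condition is precisely what is needed to ensure $\widetilde{J} \cap A = J \neq A$, so $\mathcal{O}(X,A)$ is not simple. If instead $X^{\otimes n} \cong A$ as correspondences for some $n \geq 1$ via a unitary $v$, then the element $v \in X^{\otimes n} \subseteq \mathcal{O}(X,A)$ is a unitary whose presence produces a $\mathbb{Z}/n\mathbb{Z}$-valued grading on $\mathcal{O}(X,A)$ independent of the gauge action, and averaging over this finer action yields a proper quotient (essentially, $\mathcal{O}(X,A)$ becomes Morita equivalent to the crossed product of its core by this periodicity, which has non-trivial ideal structure).

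For the sufficiency direction, suppose $I$ is an ideal in $\mathcal{O}(X,A)$ and consider $J := I \cap A$. A direct computation using $\langle \xi, a\eta \rangle \in A$ for $\xi, \eta \in X$, $a \in J$, shows $J$ is $X$-invariant, so by minimality either $J = A$ (giving $I = \mathcal{O}(X,A)$) or $J = 0$. In the latter case, the quotient map $\pi : \mathcal{O}(X,A) \to \mathcal{O}(X,A)/I$ is injective on $A$. The gauge action descends to the quotient provided $I$ is gauge-invariant; to force this I would use nonperiodicity to show that $\Phi(I) \subseteq I$, equivalently that $I$ is invariant under $\gamma_t$. This is the delicate step: one must argue that any element $b \in I$ has all its spectral components (in the Fourier decomposition with respect to $\gamma$) also in $I$, and nonperiodicity prevents the possibility that mixing between different spectral subspaces could conceal a non-trivial $I$. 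Once gauge invariance is established, the gauge-invariant uniqueness theorem (Katsura's Theorem 6.4, already invoked in the proof of Theorem~\ref{isomorphism}) forces $\pi$ to be injective, hence $I = 0$.

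The main obstacle is exactly the gauge-invariance step in the sufficiency direction. Getting from $I \cap A = 0$ plus nonperiodicity to $\gamma_t(I) \subseteq I$ requires a non-trivial argument along the lines of a Fourier coefficient analysis: one considers the spectral subspaces $\mathcal{O}(X,A)_k = \{x : \gamma_t(x) = e^{ikt}x\}$ and must show that if $b \in I$ has non-zero components in several $\mathcal{O}(X,A)_k$ then a ``twisted'' expectation produces elements of $I \cap A$, contradicting minimality --- unless the different components are related by a correspondence isomorphism $X^{\otimes n} \cong A$, which nonperiodicity forbids. This averaging/dichotomy argument, rather than the ideal-theoretic bookkeeping, is where the real content of Schweizer's theorem lies.
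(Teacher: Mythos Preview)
The paper does not prove this theorem. It is quoted verbatim as a result of Schweizer (\cite[Theorem~3.9]{Sch01}) and is used as a black box to obtain Theorem~\ref{simplicity}. There is therefore no ``paper's own proof'' to compare your proposal against; the authors simply invoke the result.

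As for the content of your sketch: the overall architecture (intersect an ideal with $A$, use minimality to force $I\cap A\in\{0,A\}$, then use nonperiodicity to rule out $I\cap A=0$ with $I\neq 0$) is indeed the standard shape of such arguments, and your identification of the gauge-invariance step as the crux is correct. However, be aware that the passage from ``$I\cap A=0$ and $X$ nonperiodic'' to ``$I$ is gauge-invariant'' is not how the argument is usually completed. One does \emph{not} prove that every ideal is $\gamma$-invariant; rather, one shows directly that $I\cap A=0$ forces $I\cap \mathcal{O}(X,A)^{\gamma}=0$ (using minimality and the inductive-limit structure of the core), and then that $\Phi(I)\subseteq I$ via an approximation argument showing $\Phi(b)$ lies in the closed ideal generated by $b$ for positive $b$. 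Nonperiodicity enters exactly to prevent a unitary in some $X^{\otimes n}$ from intertwining the spectral subspaces and collapsing this argument. Your description of the periodicity obstruction in the necessity direction is also somewhat imprecise: the point is that a unitary $v\in X^{\otimes n}$ implementing the isomorphism is central in $\mathcal{O}(X,A)$ up to the gauge action, so $\mathcal{O}(X,A)$ contains a copy of $C(\mathbb{T})$ (or more precisely, its spectrum acquires a free $\mathbb{T}$-action with period $n$), which immediately yields non-trivial ideals. If you want to see a complete proof, consult Schweizer's paper directly; the present paper is not the place to look.
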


Using Proposition~\ref{nonperiodic} ,  we get the following.

\begin{thm}\label{simplicity} If $d>1$ and Condition A(p) holds for some $p>0$,
	then the $C^*$-algebra $\mathcal{T}(\mathbb{C}^d,Z)/\mathcal{K}$ is simple if and only if  the equivalent conditions of \ref{invariantideals} do not holds.
	
	If $d=1$ and Condition A(p) holds (so that $z$ commutes with $u^p$) then the $C^*$-algebra $\mathcal{T}(\mathbb{C}^d,Z)/\mathcal{K}$ is not simple.
\end{thm}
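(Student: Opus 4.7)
\emph{Proof plan.}
My plan is to invoke Theorem~\ref{isomorphism} to identify $\calt(\cc^d,Z)/\calk$ with the Cuntz--Pimsner algebra $\mathcal{O}(q(F),q(\cald))$ and then apply Schweizer's criterion (Theorem~\ref{simpleCP}). I therefore need to decide when $q(F)$ is a full, minimal, nonperiodic $C^*$-correspondence over $q(\cald)$. Fullness is immediate: since $S_i^*S_i=I$, one has $I=\<q(S_i),q(S_i)\>\in \<q(F),q(F)\>$, so $\<q(F),q(F)\>=q(\cald)$.

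Next I would translate minimality into the invariance condition of Definition~\ref{idinv}. Using Lemma~\ref{qF}, $q(F)=\sum_i u_iq(\cald)$, so for any closed ideal $J\subseteq q(\cald)$
$$
\<q(F),Jq(F)\>=\sum_{i,j}q(\cald)\,u_i^*Ju_j\,q(\cald).
$$
Thus Schweizer's requirement $\<q(F),Jq(F)\>\subseteq J$ is equivalent to $u_i^*Ju_j\subseteq J$ for all $i,j$, which is precisely the invariance of Definition~\ref{idinv}. Consequently $q(F)$ is minimal if and only if $q(\cald)$ has no nontrivial invariant ideal, and by Proposition~\ref{invariantideals} this is the negation of the equivalent conditions (1)--(3) listed there.

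For nonperiodicity, I would first identify $q(F)^{\otimes n}$ (the interior tensor product used in the Cuntz--Pimsner construction) with $q(F^n)$ via the $q(\cald)$-bilinear map $q(\xi_1)\otimes\cdots\otimes q(\xi_n)\mapsto q(\xi_1\xi_2\cdots\xi_n)$. This map intertwines the bimodule actions and preserves the inner products (since $\<\xi,\eta\>=\xi^*\eta$ on each factor), so it is a unitary isomorphism of correspondences. With this identification, Proposition~\ref{nonperiodic}(1) gives nonperiodicity automatically whenever $d>1$, so Schweizer's criterion collapses to minimality, yielding the first assertion of the theorem. For $d=1$, Condition A($p$) together with Lemma~\ref{uz} shows that $z$ commutes with $u^p$, and Proposition~\ref{nonperiodic}(2) then produces an isomorphism of correspondences $q(\cald)\cong q(F^p)$; hence $q(F)$ is periodic and Theorem~\ref{simpleCP} forces $\calt/\calk$ to be non-simple, giving the second assertion.

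The main technical obstacle is verifying the identification $q(F)^{\otimes n}\cong q(F^n)$ cleanly as $q(\cald)$-correspondences, i.e.\ checking that the multiplication map is well defined on the balanced tensor product and that the resulting inner products coincide with $q(F^n)^*q(F^n)$; once this is established, the theorem becomes a direct combination of Theorems~\ref{isomorphism} and \ref{simpleCP} with Propositions~\ref{nonperiodic} and \ref{invariantideals}.
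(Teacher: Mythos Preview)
Your proposal is correct and follows exactly the route the paper takes: the paper's proof of Theorem~\ref{simplicity} is essentially the one-liner ``Using Proposition~\ref{nonperiodic}\ldots'' after quoting Schweizer's criterion, and you have simply spelled out the details it leaves implicit (fullness of $q(F)$, the equivalence of Schweizer's minimality with invariance in the sense of Definition~\ref{idinv}, and the identification $q(F)^{\otimes n}\cong q(F^n)$ needed to invoke Proposition~\ref{nonperiodic}). Nothing in your argument departs from the paper's strategy.
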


Observe that the case $d=1$ in this theorem follows also from \cite[Proposition 2.2]{G}.


Using our analysis of Example~\ref{ex1}, Example~\ref{ex2} and Example~\ref{ex3} and using Lemma~\ref{simplicityA1}, we get

\begin{cor}\label{EX} Suppose $d>1$.
	The algebras $\mathcal{T}(\mathbb{C}^d,Z)/\mathcal{K}$ of Example~\ref{ex1} and of Example~\ref{ex2} are simple while the algebra of Example~\ref{ex3} is not.
	
	Also, when Condition A(1) holds, the algebra $\mathcal{T}(\mathbb{C}^d,Z)/\mathcal{K}$ is simple if and only if $C^*(z)$ has no non trivial ideals. 
	
	If $z$ is a normal operator, this happens if and only if 
	$z\in \mathbb{C}I$ which is the case if and only if $Z_k\rightarrow \lambda I$ for some $\lambda \in \mathbb{C}$. (Note that, when $Z_k=I$ for all $k$, the algebra is the Cuntz algebra $\mathcal{O}_d$). 
\end{cor}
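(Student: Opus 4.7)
The plan is to read off each statement of the corollary directly from the structural results we have already assembled, so the proof is essentially a matter of assembling references rather than new computation. First I would invoke Theorem~\ref{simplicity}: since $d>1$ and Condition A(p) is in force (with $p=2$ in Examples~\ref{ex1}--\ref{ex3} and $p=1$ in the second part), simplicity of $\mathcal{T}(\mathbb{C}^d,Z)/\mathcal{K}$ is equivalent to the failure of all three equivalent conditions of Proposition~\ref{invariantideals}; in particular it suffices to check that $\mathcal{C}_{00}$ admits no non-trivial ideal $J_{00}$ with the invariance property $\tau(u_i^*\tau^{-1}(M_{d^q}\otimes J_{00})u_j)\subseteq M_{d^q}\otimes J_{00}$.

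For Example~\ref{ex1}, the analysis there shows $\mathcal{C}_{00}\cong M_2$, which has no non-trivial ideals at all, so simplicity is immediate. For Example~\ref{ex2}, although $\mathcal{C}_{00}\cong \mathbb{C}^2$ has two non-trivial ideals, the explicit computation given in that example shows neither of them is invariant in the sense of Proposition~\ref{invariantideals}(3); hence again the algebra is simple. For Example~\ref{ex3}, the same kind of computation (carried out in the text) produces an ideal $J_1$ that \emph{is} invariant, so by Theorem~\ref{simplicity} the algebra is not simple. Thus the first assertion of the corollary follows entirely from what is already in the examples.

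For the case when Condition A(1) holds, I would apply Lemma~\ref{simplicityA1}, which tells us that every ideal of $q(\mathcal{D})$ is automatically invariant; so the obstruction to simplicity reduces to the existence of any non-trivial ideal of $q(\mathcal{D})$, and by Corollary~\ref{A1} (which identifies $q(\mathcal{D})\cong M_{d^\infty}\otimes C^*(z)$) this is in turn equivalent to $C^*(z)$ having a non-trivial ideal. This gives the second claim.

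Finally, when $z$ is normal, $C^*(z)$ is commutative and its (closed) ideals correspond to open subsets of $\mathrm{sp}(z)$; triviality of the ideal lattice therefore forces $\mathrm{sp}(z)$ to be a single point $\{\lambda\}$, i.e.\ $z=\lambda I$ in $\mathcal{T}/\mathcal{K}$. Unwinding $z=q(Z)$, this is precisely the statement that $Z-\lambda I$ is compact on $\mathcal{F}(\mathbb{C}^d)$, which in view of the block-diagonal structure $Z=Z_0\oplus Z_1\oplus\cdots$ is equivalent to $\|Z_k-\lambda I\|\to 0$. The note about the Cuntz algebra is the special case $\lambda=1$, recovering $\mathcal{O}_d$. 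The only mild subtlety I anticipate is checking that $\lambda\neq 0$ automatically (it is, by the standing hypothesis $|Z_k|\geq \epsilon I$), but this is essentially immediate from the weight-sequence axioms.
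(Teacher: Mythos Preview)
Your proposal is correct and follows exactly the approach the paper takes: the paper's ``proof'' is nothing more than the single line ``Using our analysis of Example~\ref{ex1}, Example~\ref{ex2} and Example~\ref{ex3} and using Lemma~\ref{simplicityA1}, we get,'' and you have simply spelled out how those references combine via Theorem~\ref{simplicity} and Proposition~\ref{invariantideals}. If anything, your write-up is more explicit than the paper's, but the logical route is identical.
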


\section{Representations of $\mathcal{T}/\mathcal{K}$}
The main theorem of this section is the following.
 
\begin{thm}\label{representations}
Suppose Condition A(p) holds, $\{U_i\}_{1\leq i \leq d}$ is a Cuntz family (in $B(H)$), $R\in B(H)$ is a  invertible operator that commutes with $U_{\alpha}$ and with $U_{\alpha}^*$ for all words $\alpha$ of length $p$. Then the following are equivalent.
\begin{enumerate}
                \item [(1)] There is a $^*$-representation $\pi$ of $\mathcal{T}/\mathcal{K}$ on $H$ such that $\pi(u_i)=U_i$ for $1\leq i \leq d$ and $\pi(z)=R$.
                \item[(2)] There is a $^*$-representation $\pi_{00}$ of $\mathcal{C}_{00}$ on $H$ such that (in the notation of Lemma~\ref{C00}), 
                $$\pi_{00}(u_{\xi_m}f(z)u_{\xi_{q-m}}u_{\xi_q}^*)=U_1^mf(R)U_i^{q-m}U_1^{* q}$$ for every polynomial $f$ and $0\leq m\leq q=p-1$.
\end{enumerate}           
Of course, if $z$ is assumed positive (as we can, by Lemma~\ref{ueq}), then we might take $R$ to be positive and then the requirement that it commutes with $U_{\alpha}^*$ is redundant.               
\end{thm}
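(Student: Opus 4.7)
The plan is to apply Theorem~\ref{isomorphism}, which identifies $\mathcal{T}/\mathcal{K}$ with the Cuntz-Pimsner algebra $\mathcal{O}(q(F), q(\mathcal{D}))$. A $*$-representation $\pi$ of this algebra on $H$ corresponds bijectively to a covariant Toeplitz pair $(T, \sigma)$ with $T\colon q(F) \to B(H)$ and $\sigma\colon q(\mathcal{D}) \to B(H)$ satisfying conditions (1)--(3) following Theorem~\ref{isomorphism}. So the task reduces to building such a pair from the data $(U_i, R, \pi_{00})$.

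The direction $(1) \Rightarrow (2)$ is immediate by restriction: set $\pi_{00} := \pi|_{\mathcal{C}_{00}}$ and evaluate $\pi$ on the generators of $\mathcal{C}_{00}$ listed in Lemma~\ref{C00} to obtain the displayed formula.

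For $(2) \Rightarrow (1)$, the main job is to extend $\pi_{00}$ to a unital $*$-homomorphism $\sigma\colon q(\mathcal{D}) \to B(H)$ with $\sigma(u_\alpha u_\beta^*) = U_\alpha U_\beta^*$ for every $|\alpha|=|\beta|$. I would work level by level in the filtration $q(\mathcal{D}) = \overline{\cup_n \mathcal{C}_n}$. Combining Lemma~\ref{maps}(3) with the decomposition $\mathcal{C}_0 \cong M_{d^q} \otimes \mathcal{C}_{00}$ preceding Lemma~\ref{C00} identifies $\mathcal{C}_n$ with a tensor product in which a typical generator is $u_\alpha u_{\xi_q}^* b u_{\xi_q} u_\beta^*$ (for $|\alpha|=|\beta|=pn+q$ and $b \in \mathcal{C}_{00}$). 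I would then define
\[
\sigma_n\bigl(u_\alpha u_{\xi_q}^* b u_{\xi_q} u_\beta^*\bigr) := U_\alpha U_{\xi_q}^* \pi_{00}(b) U_{\xi_q} U_\beta^*.
\]
Multiplicativity of $\sigma_n$ follows from the Cuntz relations $U_\alpha^* U_\beta = \delta_{\alpha,\beta} I$ together with $\pi_{00}$ being a $*$-homomorphism, while unitality $\sigma_n(I) = I$ uses $\pi_{00}(1_{\mathcal{C}_{00}}) = U_{\xi_q} U_{\xi_q}^*$ (the case $m=0$, $f \equiv 1$ of (2)) combined with the Cuntz completeness $\sum_{|\alpha|=pn+q} U_\alpha U_\alpha^* = I$.

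Assuming the $\sigma_n$ are consistent, they assemble into $\sigma\colon q(\mathcal{D}) \to B(H)$ satisfying $\sigma(u_\alpha u_\beta^*) = U_\alpha U_\beta^*$ (take $b = 1_{\mathcal{C}_{00}}$ in the defining formula) and $\sigma|_{\mathcal{C}_{00}} = \pi_{00}$. Since by Lemma~\ref{comm} the element $z$ is central modulo matrix units, one shows $\tau(z) = I \otimes z\,u_{\xi_q} u_{\xi_q}^*$ in $M_{d^q} \otimes \mathcal{C}_{00}$, from which evaluating $\sigma$ yields $\sigma(z) = R$. Next, define $T\colon q(F) \to B(H)$ by $T(u_i a) := U_i \sigma(a)$ for $a \in q(\mathcal{D})$, extended linearly via $q(F) = \sum_i u_i q(\mathcal{D})$ (Lemma~\ref{qF}); well-definedness uses $U_i^* U_j = \delta_{i, j} I$. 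The bimodule and isometry conditions are then immediate. For the Katsura covariance $\sigma^{(1)} \circ \varphi_{q(F)} = \sigma$ on $J_{q(F)} = q(\mathcal{D})$ (the ideal structure by Lemma~\ref{qF1}), note that $\varphi_{q(F)}(q(d)) = \sum_i \theta_{q(dS_i), q(S_i)}$, so by the bimodule property $\sigma^{(1)}(\varphi_{q(F)}(q(d))) = \sum_i T(q(dS_i)) T(q(S_i))^* = \sigma(q(d)) \sum_i U_i U_i^* = \sigma(q(d))$ by Cuntz completeness.

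The hardest step is verifying the compatibility $\sigma_{n+1} \circ \iota_n = \sigma_n$ of the level-wise definitions. Tracing through Lemma~\ref{maps}(1) and Lemma~\ref{psintilde}, this reduces to identities of the form $\sum_{|\rho|=p} U_\rho X U_\rho^* = X$ where $X$ is built from the $\pi_{00}$-images and the Cuntz isometries. The hypothesis that $R$ commutes with $U_\rho$ and $U_\rho^*$ for every word $\rho$ of length $p$, which mirrors Condition A(p) via Lemma~\ref{uz}, is exactly what forces such $X$ to commute with every $U_\rho$, so the sum collapses to $X \sum_\rho U_\rho U_\rho^* = X$. With compatibility established, $(T,\sigma)$ is a covariant representation of the Cuntz-Pimsner algebra, and Theorem~\ref{isomorphism} delivers the desired $\pi$.
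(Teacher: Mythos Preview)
Your outline follows essentially the same route as the paper: extend $\pi_{00}$ to $\sigma$ on $q(\mathcal D)$ level by level through the filtration $\mathcal C_n\cong M_{d^{pn+q}}\otimes\mathcal C_{00}$, check the compatibility $\sigma_{n+1}\circ\iota_n=\sigma_n$ using the commutation of $R$ with $U_\rho$ for $|\rho|=p$, and then form the pair $(\sigma,T)$ to invoke Theorem~\ref{isomorphism}. The only real slip is the sentence ``the bimodule and isometry conditions are then immediate.'' The inner-product and right-module conditions are indeed immediate, but the \emph{left}-module law $T(d\cdot u_i a)=\sigma(d)U_i\sigma(a)$ is not: writing $d\,u_i=\sum_j u_j(u_j^*du_i)$, it reduces to the identity $\sigma(u_j^*du_i)=U_j^*\sigma(d)U_i$, which is not a formal consequence of $\sigma$ being a homomorphism (since $u_i\notin q(\mathcal D)$). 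The paper handles this by first establishing the generator formula $\sigma(u_\alpha z^{\ell}u_\beta^*)=U_\alpha R^{\ell}U_\beta^*$ for all $|\alpha|=|\beta|$ and then checking the identity case by case; the case $|\alpha|=|\beta|=0$ genuinely uses the hypothesis that $R$ commutes with $U_\gamma$ for $|\gamma|=p$, via the same rewriting $u_j^*z^{\ell}u_i=\delta_{i,j}\sum_{|\gamma|=q}u_\gamma z^{\ell}u_\gamma^*$ as in Lemma~\ref{maps}(5). You should insert that verification; once it is in place, the rest of your argument matches the paper.
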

\begin{proof}
It is clear that (1) implies (2). To prove the other direction we assume that $\{U_i\}$ and $R$ are as in the statement of the theorem and (2) holds. Then, we proceed in two steps.
 
First, we show that there is a $^*$-representation $\sigma$ of $q(\cald)$ on $H$ such that, for the generators (see Lemma~\ref{qDAp}) we have $\sigma(u_{\alpha}z^mu_{\beta}^*)=U_{\alpha}R^mU_{\beta}^*$ for $|\alpha|=|\beta|$.
 
Then, we define a linear map $T:q(F)\rightarrow B(H)$ by $T(\Sigma_i u_i d_i)=\Sigma_i U_i\sigma(d_i)$, for $d_i\in q(\cald)$. (Note that, by Lemma~\ref{qF}, $q(F)=\Sigma u_iq(\cald)$). Then we show that the pair $(\sigma,T)$ induces a $^*$-representation of the Cuntz-Pimsner algebra $\mathcal{O}(q(\cald),q(F))$ (which is isomorphic to $\mathcal{T}/\mathcal{K}$).   

Since $\{U_i\}$ is a Cuntz family, we shall repeatedly use the following (which also hold for $\{u_i\}$ by Lemma~\ref{alpha}).
\begin{enumerate}
	\item [(i)] For words $\alpha,\beta$ of the same length, $U_{\alpha}^*U_{\beta}=\delta_{\alpha,\beta}$
	\item[(ii)] For every $n>0$, $\Sigma_{|\alpha|=n}U_{\alpha}U_{\alpha}^*=I$.
\end{enumerate}
Now we turn to the proof of the first step (the existence of $\sigma$ as above).

We are given a $^*$-representation $\pi_{00}$ of $\mathcal{C}_{00}$ on $H$ such that (in the notation of Lemma~\ref{C00}), 
$$\pi_{00}(u_{\xi_m}f(z)u_{\xi_{q-m}}u_{\xi_q}^*)=U_1^mf(R)U_1^{q-m}U_1^{* q}$$ for every polynomial $f$ and $0\leq m\leq q=p-1$.

Note that, for every $a\in \mathcal{C}_{00}$, $a=u_{\xi_q}u_{\xi_q}^*au_{\xi_q}u_{\xi_q}^*$. Since $\pi_{00}(u_{\xi_q}u_{\xi_q}^*)=U_1^qU_1^{q *}$, we have that  $\pi_{00}$ is in fact a representation of $\mathcal{C}_{00}$ on  $U_1^qU_1^{q *}H$.
Using Lemma~\ref{M2C} (with $u_{\alpha}u_{\beta}^*$ in place of $t_{\alpha,\beta}$) and the discussion following it, we can write every $b\in \mathcal{C}_0$ as
$$b=\tau^{-1}(\Sigma_{|\alpha|=|\beta|=q} e_{\alpha,\beta}\otimes a_{\alpha,\beta})=\Sigma_{|\alpha|=|\beta|=q} u_{\alpha}u_{\xi_q}^*a_{\alpha,\beta}u_{\xi_q}u_{\beta}^* $$ for some $\{a_{\alpha,\beta}: |\alpha|=|\beta|=q \}\subseteq \mathcal{C}_{00}$ and $b=0$ only if $a_{\alpha,\beta}=0$ for all $\alpha,\beta$.
Therefore, we can define $\sigma_0$ on $\mathcal{C}_{0}$ by
\begin{equation}\label{sigma0}
\sigma_0(\Sigma_{|\alpha|=|\beta|=q} u_{\alpha}u_{\xi_q}^*a_{\alpha,\beta}u_{\xi_q}u_{\beta}^* )=\Sigma_{|\alpha|=|\beta|=q} U_{\alpha}U_1^{* q}\pi_{00}(a_{\alpha,\beta})U_1^qU_{\beta}^* .\end{equation} 
Then $\sigma_0$ is well defined (as $\Sigma_{|\alpha|=|\beta|=q} u_{\alpha}u_{\xi_q}^*a_{\alpha,\beta}u_{\xi_q}u_{\beta}^* =0$ only if all $a_{\alpha,\beta}$ are $0$) and is clearly selfadjoint. To show that it is a $^*$-representation, it is left to prove that it is multiplicative. But
$$bb'=( \Sigma_{|\alpha|=|\beta|=q}u_{\alpha}u_{\xi_q}^*a_{\alpha,\beta}u_{\xi_q}u_{\beta}^* )(\Sigma_{|\theta|=|\gamma|=q} u_{\theta}u_{\xi_q}^*a_{\theta,\gamma}'u_{\xi_q}u_{\gamma}^* )= \Sigma_{|\alpha|=|\beta|=|\gamma|=q}u_{\alpha}u_{\xi_q}^*a_{\alpha,\beta}a_{\beta,\gamma}'u_{\xi_q}u_{\gamma}^*$$ 
since $u_{\xi_q}u_{\beta}^* u_{\theta}u_{\xi_q}^*=\delta_{\theta,\beta}u_{\xi_q}u_{\xi_q}^*$ and $u_{\xi_q}u_{\xi_q}^*a=a$ for every $a\in \mathcal{C}_{00}$. Applying $\sigma_0$ we get

$$\sigma_0(bb')=\Sigma_{|\alpha|=|\beta|=|\gamma|=q}U_{\alpha}U_1^{* q}\pi_{00}(a_{\alpha,\beta}a_{\beta,\gamma}')U_1^qU_{\gamma}^*. $$
 But, since  $U_1^qU_{\beta}^* U_{\theta}U_1^{* q}=\delta_{\theta,\beta}U_1^qU_1^{* q}$ and $U_1^qU_i^{* q}\pi_{00}(a)=\pi(a)$ for every $a\in \mathcal{C}_{00}$, we get

$$\sigma_0(bb')=( \Sigma_{|\alpha|=|\beta|=q}U_{\alpha}U_1^{* q}\pi_{00}(a_{\alpha,\beta})U_1^qU_{\beta}^* )(\Sigma_{|\theta|=|\gamma|=q} U_{\theta}U_1^{* q}\pi_{00}(a_{\theta,\gamma}')U_1^qU_{\gamma}^* )= \sigma_0(b)\sigma_0(b').$$ 
Thus $\sigma_0$ is a representation of $\mathcal{C}_0$ on $H$. Note that, for every $a\in \mathcal{C}_{00}$ and $|\alpha|=|\beta|=q$, we have
\begin{equation}\label{sigma01}
\sigma_0(u_{\alpha}u_{\xi_q}^*au_{\xi_q}u_{\beta}^* )=U_{\alpha}U_1^{* q}\pi_{00}(a)U_1^qU_{\beta}. \end{equation}
Thus, for $l\geq 0$ and $0\leq |\alpha|=|\beta|<p$, \begin{equation}\label{sigma02}
\sigma_0(u_{\alpha}z^lu_{\beta}^*)=\sigma_0(u_{\alpha}u_{\xi_q}^*u_{\xi_q}z^lu_{\xi_q}^*u_{\xi_q}u_{\beta}^*)=U_{\alpha}U_1^{* q}(U_1^qR^lU_1^{* q})U_1^qU_{\beta}=U_{\alpha}R^lU_{\beta}^*.\end{equation} 
In particular $$\sigma_0(z)=R.$$

Next, for every $n>0$ we define a $^*$-representation of $\mathcal{C}_n$ on $H$. To do this, note that it follows from Lemma~\ref{maps} (3) that every $b\in \mathcal{C}_n$ can be written uniquely as
$$b=\Sigma_{|\alpha|=|\beta|=pn}u_{\alpha}a_{\alpha,\beta}u_{\beta}^*$$
for some $\{a_{\alpha,\beta}\}$ in $\mathcal{C_0}$.
Thus, we define $\sigma_n$ on $\mathcal{C}_n$ by
$$\sigma_n(\Sigma_{|\alpha|=|\beta|=pn}u_{\alpha}a_{\alpha,\beta}u_{\beta}^*)=\Sigma_{|\alpha|=|\beta|=pn}U_{\alpha}\sigma_0(a_{\alpha,\beta})U_{\beta}^*.$$

It is clear that $\sigma_n$ is well defined and selfadjoint. The proof that it is multiplicative proceeds along the same lines as the proof above that $\sigma_0$ is multiplicative and is omitted.

Recall (Equation~\ref{Cn}) that $\mathcal{C}_n$ is generated by $\{u_{\alpha}z^lu_{\beta}^*:\; l\geq 0,\; np\leq |\alpha|=|\beta|<(n+1)p\;\}$. Given $\alpha,\beta$ with $np\leq |\alpha|=|\beta|<(n+1)p$, we can write $u_{\alpha}=u_{\alpha'}u_{\gamma}, u_{\beta}=u_{\beta'}u_{\theta}$ where $|\alpha'|=|\beta'|=np$ and $0\leq |\gamma|=|\theta|<p$. It follows that $u_{\gamma}z^lu_{\theta}^*\in \mathcal{C}_0$ and $u_{\alpha}z^lu_{\beta}^*=u_{\alpha'}u_{\gamma}z^lu_{\theta}^*u_{\beta'}^*$.

Thus $\sigma_n(u_{\alpha}z^lu_{\beta}^*)=\sigma_n(u_{\alpha'}(u_{\gamma}z^lu_{\theta}^*)u_{\beta'}^*)=U_{\alpha'}\sigma_0(u_{\gamma}z^lu_{\theta}^*)U_{\beta'}^*$ and, using Equation~\ref{sigma02}, we have, for $np\leq |\alpha|=|\beta|<(n+1)p$,
\begin{equation}\label{sigman1}
\sigma_n(u_{\alpha}z^lu_{\beta}^*)=U_{\alpha'}U_{\gamma}R^lU_{\theta}^*U_{\beta'}^*=U_{\alpha}R^lU_{\beta}^*.
\end{equation}


Recall that, for every $n\geq 0$, $\mathcal{C}_n\subseteq \mathcal{C}_{n+1}$ and $q(\cald)=\overline{\cup \mathcal{C}_n}$. In order to use the representations $\{\sigma_n\}$ to define a representation $\sigma$ on $q(\cald)$, we need to show that, for every $n \geq 0$, $\sigma_{n+1}$, restricted to $\mathcal{C}_n$, equals $\sigma_n$.

In Lemma~\ref{maps} (1), we denoted the inclusion of $\mathcal{C}_n$ in $\mathcal{C}_{n+1}$, $\iota_n$. Thus, we want to show that $\sigma_{n+1}\circ \iota_n=\sigma_n$. So, fix $u_{\alpha}z^lu_{\beta}^*\in \mathcal{C}_n$ (with $np\leq |\alpha|=|\beta|<(n+1)p$). Then $\iota_n(u_{\alpha}z^lu_{\beta}^*)=\Sigma_{|\gamma|=p}u_{\alpha}u_{\gamma}z^lu_{\gamma}^*u_{\beta}^*$ and
$$\sigma_{n+1}\circ \iota_n (u_{\alpha}z^lu_{\beta}^*)=\sigma_{n+1}(\Sigma_{|\gamma|=p}u_{\alpha \gamma}z^lu_{\beta \gamma}^*)=\Sigma_{|\gamma|=p}U_{\alpha \gamma}R^lU_{\beta \gamma}^*=\Sigma_{|\gamma|=p}U_{\alpha}U_{ \gamma}R^lU_{ \gamma}^*U_{\beta}^*$$  $$=\Sigma_{|\gamma|=p}U_{\alpha}U_{\gamma}U_{\gamma}^*R^lU_{\beta}^*=U_{\alpha}R^lU_{\beta}^*=\sigma_n(u_{\alpha}z^lu_{\beta}^*)$$ where we used the assumption that $R$ commutes with $U_{\gamma}$ for $|\gamma|=p$.

Thus $\{\sigma_n\}$ defines a unital $^*$-representation $\sigma$ of $q(\cald)=\overline{\cup_n \mathcal{C}_n}$ that satisfies
\begin{equation}\label{sigma}
\sigma(u_{\alpha}z^lu_{\beta}^*)=U_{\alpha}R^lU_{\beta}^* \end{equation} 
for $l\geq 0$ and $0\leq |\alpha|=|\beta|$.

Now we show that the pair $(\sigma,T)$, as defined above, induces a unital $^*$-representation $\pi$ as in (1).

 Thus, we have to show that $(\sigma, T)$ satisfies three conditions, listed in the discussion preceding  Lemma ~\ref{homomorphism}: 1) $T:q(F)\rightarrow B(H)$ is a bimodule map (over $\sigma$), 2) $T$ preserves the inner product: $T(\xi)^*T(\eta)=\sigma(\langle \xi,\eta\rangle)$, and 3) $(\sigma, T)$ satisfies the covariance condition.

For the bimodule property note that it is immediate that $T$ is a right module map. For the left module map property we have to show that given $d\in q(\cald)$ and $\xi\in q(F)$, then    
$T(\varphi_{q(F)}(d)\xi)=\sigma(d)T(\xi)$.
Since every $\xi\in q(F)$ has the form $\sum_iu_id_i$, $d_i\in q(\cald)$, we can reduce the proof to showing that $T(\varphi_{q(F)}(d)u_id_i))=\sigma(d)T(u_id_i)$ for each fixed $i$. Using $\sum_ju_ju_j^*=I$, we obtaini
$$T(\varphi_{q(F)}(d)u_id_i)=T(\sum_j u_ju_j^*(\varphi_{q(F)}(d)u_i)d_i)=\sum_jU_j\sigma(u_j^*du_i)\sigma(d_i).$$
We shall prove that for every $i,j$ and every $d\in q(\cald)$,   $$\sigma(u_j^*du_i)=U_j^*(\sigma(d))U_j.$$ 
Then we obtain
$$\sum_jU_j\sigma(u_j^*(\varphi_{q(F)}(d)u_i)\sigma(d_i)=\sum_jU_jU_j^*\sigma(d)U_i\sigma(d_i)=\sigma(d)U_i\sigma(d_i).$$
Hence,
$T(\varphi_{q(F)}(d)u_id_i)=\sigma(d)T(u_id_i)$ for every $d\in q(\cald)$ and every fixed $i$, and this will proves that 
$$T(\varphi_{q(F)}(d)\xi)=\sigma(d)T(\xi).$$
Thus we have to prove the equality $\sigma(u_j^*du_i)=U_j^*(\sigma(d))U_j$.
By Lemma ~\ref{qDAp}, it is enough to prove it for $d=u_\alpha z^mu_\beta^*$, where $|\alpha|=|\beta|$. We distinguish two cases: $|\alpha|=|\beta|\geq 1$ and $|\alpha|=|\beta|=0$. If $|\alpha|=|\beta|\geq 1$ then we write $\alpha=\alpha_1\alpha'$ and $\beta=\beta_1\beta'$, with $|\alpha'|=|\beta'|=|\alpha|-1$, and we get
$\sigma(u_j^*u_\alpha z^mu_\beta^*u_i)=\sigma(u_j^*u_{\alpha_1} u_{\alpha'}z^mu_{\beta'}^*u_{\beta_1}^*u_i)=\delta_{\alpha_1,j}\delta_{i,\beta_1}\sigma(u_{\alpha'}z^mu_{\beta'}^*)$, 
since $u_j^*u_{\alpha_1}=\delta_{\alpha_1,j}$ and $u_{\beta_1}^*u_i=\delta_{i,\beta_1}$. 
Hence $$\sigma(u_j^*u_\alpha z^mu_\beta^*u_i)=\delta_{\alpha_1,j}\delta_{i,\beta_1}\sigma(u_{\alpha'}z^mu_{\beta'}^*)=\delta_{\alpha_1,j}\delta_{i,\beta_1}U_{\alpha'}R^mU_{\beta'}^*=U_j^*U_{\alpha}R^mU_{\beta}^*U_i$$   $$=U_j^*\sigma(u_{\alpha}z^mu_{\beta}^*)U_i.$$

Now, let $|\alpha|=|\beta|=0$. Then we have to show that $\sigma(u_j^*z^mu_i)=U_j^*\sigma(z^m)U_j$. 
As in the proof of Lemma~\ref{maps}, (5), we can write
$$u_j^*z^mu_i=\sum_{|\gamma|=q}u_\gamma u_\gamma^*u_j^*z^mu_i=
\sum_{|\gamma|=q}u_\gamma z^mu_\gamma^*u_j^*u_i=\delta_{i,j}\sum_{|\gamma|=q}u_\gamma z^mu_\gamma^*.$$
Then, $\sigma(u_j^*z^mu_i)=\sigma(\delta_{i,j}\sum_{|\gamma|=q}u_\gamma z^mu_\gamma^*)=\delta_{i,j}\sum_{|\gamma|=q}U_\gamma R^m U_\gamma^*$. But the same calculations in $B(H)$ in the reverse direction give us: 
$$\delta_{i,j}\sum_{|\gamma|=q}U_\gamma R^m U_\gamma^*=\sum_{|\gamma|=q}U_\gamma R^m U_\gamma^*U_j^*U_i=\sum_{|\gamma|=q}U_\gamma U_\gamma^*U_j^*R^mU_i=U_j^*R^mU_i=U_j^*\sigma(z^m)U_i.$$
Hence, $\sigma(u_j^*z^mu_i)=U_j^*\sigma(z^m)U_j$, and this proves that for every $d\in q(\cald)$ and every $\alpha, \beta$ with $|\alpha|=|\beta|$, holds $\sigma(u_\alpha^* d u_\beta)=U_\alpha^* \sigma(d) U_\beta$.

For 2), let $\xi=\sum_i u_id_i\in q(F)$ and $\eta=\sum_i u_ic_i\in q(F)$, $d_i,c_i\in q(\cald)$. Then
$T(\xi)^*T(\eta)=T(\sum_i u_id_i)^*T(\sum_i u_ic_i)=(\sum_i U_i\sigma(d_i))^*\sum_i U_i\sigma(c_i)=\sum_{i,j}\sigma(d_i)^*U_i^*U_j\sigma(c_i)$. Since $U_i^*U_j=\delta_{i,j}I$, we get 
$$T(\xi)^*T(\eta)=\sum_{i}\sigma_i(d_i)^*\sigma(c_i)=\sigma(\sum_{i}d_i^*c_i).$$ 
But  $\sigma(\sum_{i}d_i^*c_i)=\sigma(\sum_{i,j}d_i^*u_i^*u_jc_j))=\sigma((\sum_i u_id_i)^*\sum_j u_jc_j)=\sigma(\langle\sum_iu_id_i,\sum_iu_ic_i\rangle)=\sigma(\langle\xi,\eta \rangle)$.

For 3), we have seen in Lemma~\ref{homomorphism} that $J_{q(F)}=q(\cald)$. Thus, for $d\in q(\cald)$, 
$\sigma^{(1)}(\varphi_{q(F)}(d))=\sigma^{(1)}(\varphi_{q(F)}(d\sum_i u_iu_i^*))=\sigma^{(1)}(\sum_i\varphi_{q(F)}((d u_i)u_i^*))=\sigma^{(1)}(\sum_i\theta_{du_i,u_i})=
\sum_iT(\varphi_{q(F)}(d)u_i)T(u_i)^*=\sum_i\sigma(d)T(u_i)T(u_i)^*=\sigma(d)\sum_iU_iU_i^*=\sigma(d)$ where, in the second equality, we used Lemma~\ref{qF1} (2).

Hence, the pair $(\sigma, T)$ defines a $*$-representation of $\mathcal{T}/\mathcal{K}$ on $B(H)$, which is obviusly is unital since $\pi(I)=\sigma(I)=I$ and the proof is complete.

   \end{proof}

In the following corollaries we demonstrate how this theorem can be applied by considering several examples.

\begin{cor}\label{repA1}
Suppose Condition A(1) holds and $z\geq 0$ (and then, by Lemma~\ref{A1}, $\mathcal{C}_{00}=\mathcal{C}_0=C^*(z)\cong C(sp(z))$). Then the $^*$-representations of $\mathcal{T}/\mathcal{K}$ are given by a Cuntz family $\{U_i\}_{i=1}^d$ (in $B(H)$) and a positive invertible operator $R\in B(H)$ that commutes with every $U_i$ and satisfies $sp(R)\subseteq sp(z)$.		
	\end{cor}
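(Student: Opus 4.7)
The plan is to deduce this from Theorem~\ref{representations} by specializing to $p=1$. With $p=1$ we have $q=p-1=0$, so $u_{\xi_q}=I$ and the only admissible value of $m$ in condition (2) of Theorem~\ref{representations} is $m=0$. Condition (2) therefore collapses to the requirement that there exist a unital $^*$-representation $\pi_{00}$ of $\mathcal{C}_{00}=\mathcal{C}_0=C^*(z)$ with $\pi_{00}(f(z))=f(R)$ for every polynomial $f$.

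Next I would use the continuous functional calculus to characterize when such a $\pi_{00}$ exists. Since $z\geq 0$, we have the identification $C^*(z)\cong C(sp(z))$, and a unital $^*$-representation of $C(sp(z))$ sending the coordinate function $z$ to $R$ exists if and only if $R\geq 0$ and $sp(R)\subseteq sp(z)$; in that case $\pi_{00}$ is obtained by composing the restriction $C(sp(z))\to C(sp(R))$ with the continuous functional calculus for $R$. Conversely, any such $\pi_{00}$ forces $R=\pi_{00}(z)\geq 0$ and (since $^*$-homomorphisms do not enlarge the spectrum) $sp(R)\subseteq sp(z)$. Invertibility of $R$ is automatic: by our standing assumptions on $\{Z_k\}$, $Z$ is invertible with bounded inverse, so $z=q(Z)$ is invertible in $q(\mathcal{D})$, hence $0\notin sp(z)\supseteq sp(R)$.

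For the commutation requirements of Theorem~\ref{representations}, note that when $p=1$ the words $\alpha$ of length $p$ are just the letters $i\in\{1,\dots,d\}$, so the theorem asks that $R$ commute with each $U_i$ (and with each $U_i^*$). By the last sentence of Theorem~\ref{representations} (or directly, since $R=R^*$ forces $RU_i=U_iR \Leftrightarrow RU_i^*=U_i^*R$), when $R$ is taken positive the commutation with the $U_i^*$ is redundant. Finally, Condition A(1) together with Lemma~\ref{uz} gives $u_iz=zu_i$ in $q(\mathcal{D})$ for every $i$, so the commutativity $RU_i=U_iR$ is forced by any representation $\pi$ with $\pi(u_i)=U_i$ and $\pi(z)=R$, showing that this requirement is necessary as well as sufficient.

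Putting these pieces together yields the stated parametrization: the data of a $^*$-representation of $\mathcal{T}/\mathcal{K}$ on $H$ corresponds to a Cuntz family $\{U_i\}_{i=1}^d\subseteq B(H)$ together with a positive invertible $R\in B(H)$ commuting with every $U_i$ and satisfying $sp(R)\subseteq sp(z)$. There is no substantial obstacle here; the only slightly delicate point is recognising that invertibility of $R$ is not an extra hypothesis but comes for free from the invertibility of $z$ inherited from the standing lower bound $|Z_k|\geq\epsilon I$.
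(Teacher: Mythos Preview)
Your proof is correct and follows the same approach as the paper: apply Theorem~\ref{representations} with $p=1$ and invoke the standard description of unital $^*$-representations of $C^*(z)\cong C(sp(z))$. The paper's own proof is a two-line sketch to exactly this effect; you have simply unpacked the specialization $q=0$ explicitly and added the (correct and helpful) observations that invertibility of $R$ is automatic from $0\notin sp(z)$ and that the commutation and spectral conditions are necessary as well as sufficient.
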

\begin{proof}
This follows from Theorem~\ref{representations} together with the fact that a $^*$-representation of $C^*(z)\cong C(sp(z))$ is given by sending $z$ to a positive  operator whose spectrum is contained in $sp(z)$.	
	\end{proof}

Note that, for the unweighted case, $z=I$ and $\mathcal{T}/\mathcal{K}\cong \mathcal{O}_d$ and Corollary~\ref{repA1} reduces to the well known description of the representations of $\mathcal{O}_d$.

\begin{cor}\label{repex1}
Let $Z$ be as in Example~\ref{ex1}. Then the $^*$-representations of $\mathcal{T}/\mathcal{K}$ are given by a Cuntz family $\{U_1,U_2\}$ (in $B(H)$) and a selfadjoint invertible operator $R\in B(H)$ that commutes with $U_iU_j$ for all $1\leq i,j \leq 2$ and such that the family $\{f_{i,j}\}_{i,j=1}^2$ defined by $f_{1,1}:=\frac{1}{2}(U_1U_1^*R-U_1U_1^*)$,  $f_{2,2}:=U_1U_1^*-f_{1,1}$,  $f_{1,2}:=f_{1,1}(U_1RU_1^*-2U_1U_1^*)$ and  $f_{2,1}:=f_{1,2}^*$ is a family of $2\times 2$ matrix units in $B(U_1U_1^*H)$.
\end{cor}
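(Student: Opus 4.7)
The plan is to apply Theorem~\ref{representations} with $p=2$, $q=1$ and then transport the compatibility condition~(2) through the explicit isomorphism $\phi : C^*(A,B,I) \to \mathcal{C}_{00}$ constructed in Example~\ref{ex1} (sending $A\mapsto u_1u_1^*z$, $B\mapsto u_1zu_1^*$, $I\mapsto u_1u_1^*$). Two preliminary observations are needed. First, by Lemma~\ref{ueq} we may assume $z$ is positive, so $R=\pi(z)$ is automatically selfadjoint and the theorem's requirement that $R$ commute with $U_\alpha^*$ for $|\alpha|=2$ is redundant given commutation with $U_\alpha$, i.e. with $U_iU_j$. Second, using $U_1U_1^* = \sum_j (U_1U_j)(U_1U_j)^*$ together with the fact that $R$ commutes with each length-$2$ word $U_1U_j$ and its adjoint, one gets $RU_1U_1^* = U_1U_1^*R$ (the representation-level analogue of Lemma~\ref{comm}).

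Next I would unpack condition~(2) of Theorem~\ref{representations}. With $p=2$, $q=1$, the formula $\pi_{00}(u_{\xi_m}f(z)u_{\xi_{q-m}}u_{\xi_q}^*) = U_1^m f(R) U_1^{q-m} U_1^{*q}$ reduces to the two cases $\pi_{00}(f(z) u_1u_1^*) = f(R)U_1 U_1^*$ and $\pi_{00}(u_1 f(z) u_1^*) = U_1 f(R) U_1^*$. Specialising to $f=1$ and $f(z)=z$ and composing with $\phi$, this says exactly that $\rho := \pi_{00} \circ \phi$ is a unital $*$-representation of $C^*(A,B,I)\cong M_2$ into the corner $B(U_1U_1^*H)$ satisfying $\rho(A) = U_1U_1^*R$, $\rho(B) = U_1RU_1^*$, $\rho(I) = U_1U_1^*$.

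Finally I would apply $\rho$ to the explicit system of matrix units exhibited in Example~\ref{ex1}, namely $e_{1,1}=\tfrac12(A-I)$, $e_{2,2}=I-e_{1,1}$, $e_{1,2}=e_{1,1}(B-2I)$, $e_{2,1}=e_{1,2}^*$. A direct calculation, using $U_1U_1^*R=RU_1U_1^*$, shows that $\rho(e_{i,j})$ is precisely the operator $f_{i,j}$ in the statement. Since $M_2$ is simple and $\rho(I)=U_1U_1^*$ is a nonzero projection, the existence of such a unital $*$-representation $\rho$ (equivalently, of $\pi_{00}$) is equivalent to $\{f_{i,j}\}$ being a system of $2\times 2$ matrix units in $B(U_1U_1^*H)$. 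Combining with Theorem~\ref{representations} yields the biconditional characterisation of the representations of $\mathcal{T}/\mathcal{K}$. I do not anticipate a serious obstacle: the work is essentially bookkeeping — tracing $\phi$, keeping track of the corner $B(U_1U_1^*H)$ in which $\rho$ lands, and verifying the commutation $RU_1U_1^* = U_1U_1^*R$.
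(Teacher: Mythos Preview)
Your approach is exactly the paper's: apply Theorem~\ref{representations} with $p=2$, transport condition~(2) through the isomorphism $\phi:C^*(A,B,I)\to\mathcal{C}_{00}$ of Example~\ref{ex1}, and read off $f_{i,j}=\rho(e_{i,j})$ from the explicit matrix units $e_{i,j}$ displayed there. Your added remarks (the commutation $RU_1U_1^*=U_1U_1^*R$, the explicit specialisation of condition~(2) to $m=0,1$) only make the paper's argument more transparent.

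One subtle point in the converse direction deserves a closer look, and the paper glosses over it too. If $\{f_{i,j}\}$ are matrix units, you get a representation $\rho':M_2\to B(U_1U_1^*H)$ with $\rho'(e_{i,j})=f_{i,j}$, and one checks directly that $\rho'(A)=3f_{1,1}+f_{2,2}=U_1U_1^*R$. But for $\rho'(B)=U_1RU_1^*$ you need $U_1RU_1^*-2U_1U_1^*=f_{1,2}+f_{2,1}$. Writing $C:=U_1RU_1^*-2U_1U_1^*$, $P:=f_{1,1}$, $Q:=f_{2,2}$, the matrix-unit relations force $PCP=0$ (from $f_{1,2}f_{1,1}=0$) and hence $PCQ=PC=f_{1,2}$, $QCP=CP=f_{2,1}$; but they impose no constraint on $QCQ$, whereas $\rho'(B)=U_1RU_1^*$ requires $QCQ=0$. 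Equivalently, the relation $e_{2,2}(B-2I)e_{2,2}=0$ in $M_2$ is not encoded by the four matrix-unit identities alone. So either the extra condition $f_{2,2}(U_1RU_1^*-2U_1U_1^*)f_{2,2}=0$ should be added to the statement, or one must argue that it follows from the commutation hypotheses on $R$; neither your sketch nor the paper's proof addresses this.
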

\begin{proof}
By Theorem~\ref{representations}, the representations are given by a Cuntz family $\{U_1,U_2\}$ (in $B(H)$) and a selfadjoint invertible operator $R\in B(H)$ that commutes with $U_iU_j$ for all $1\leq i,j \leq 2$ and gives rise to a $^*$-representation $\pi_{00}$ of $\mathcal{C}_{00}$. 	But, as shown in that example, $\mathcal{C}_{00}$ is isomorphic (via $\phi$) to the algebra $C^*(A,B,I)$ where $A,B$ are as in Example~\ref{ex1} . The isomorphism $\phi$ maps $A$ to $u_1u_1^*z$ and $B$ to $u_1zu_1^*$. Thus the representation $\pi_{00}\circ \phi$ (on $U_1U_1^*H$) maps $A$ to $U_1U_1^*R$ and $B$ to $U_1Ru_1^*$. 
It follows that $\pi_{00}\circ \phi$ maps $e_{1,1}$ to $f_{1,1}:=\frac{1}{2}(U_1U_1^*R-U_1U_1^*)$, $e_{2,2}$ into $f_{2,2}:=U_1U_1^*-f_{1,1}$, $e_{1,2}$ into $f_{1,2}:=f_{1,1}(U_1RU_1^*-2U_1U_1^*)$ and $e_{2,1}$ into $f_{2,1}:=f_{1,2}^*$. Thus, $\pi_{00}$ is well defined if and only if $\{f_{i,j}\}$ is a family of $2\times 2$ matrix units in $B(U_1U_1^*H)$.
	
	\end{proof}
\begin{cor}\label{repex2}
	Let $Z$ be as in Example~\ref{ex2}. Then the $^*$-representations of $\mathcal{T}/\mathcal{K}$ are given by a Cuntz family $\{U_1,U_2\}$ (in $B(H)$) and a selfadjoint invertible operator $R\in B(H)$ that commutes with $U_iU_j$ for all $1\leq i,j \leq 2$ and satisfy \begin{enumerate}
		\item [(i)] $sp(U_1U_1^*R)\subseteq \{1,2\}$ (as an operator on $U_1U_1^*H$).
		\item [(ii)] $sp(U_1RU_1^*)\subseteq \{1,2\}$ (as an operator on $U_1U_1^*H$).
		\item[(iii)] $U_1U_1^*R+U_1RU_1^*=3U_1U_1^*$	
	\end{enumerate}
 \end{cor}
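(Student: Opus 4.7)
My plan is to deduce Corollary~\ref{repex2} from Theorem~\ref{representations} combined with the identification of $\mathcal{C}_{00}$ carried out in Example~\ref{ex2}. Since Condition A(2) holds, we have $p=2$ and $q=p-1=1$, so $u_{\xi_q}=u_1$. By Lemma~\ref{C00}, $\mathcal{C}_{00}$ is generated, as a unital $C^*$-algebra with unit $u_1u_1^*$, by $u_1u_1^*z$ and $u_1zu_1^*$, and the compatibility identity in Theorem~\ref{representations}(2) forces
$$\pi_{00}(u_1u_1^*z)=U_1U_1^*R \quad \mbox{and} \quad \pi_{00}(u_1zu_1^*)=U_1RU_1^*.$$
Thus $\pi_{00}$ is completely determined by $R$ and $U_1$, and the task reduces to showing that exactly the conditions (i)--(iii) ensure that this prescription extends to a $^*$-homomorphism.

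The key ingredient is Example~\ref{ex2}, which exhibits a unital isomorphism $\phi:C^*(A,B,I)\to\mathcal{C}_{00}$ with $\phi(A)=u_1u_1^*z$ and $\phi(B)=u_1zu_1^*$, for $A=\mathrm{diag}(1,2)$ and $B=\mathrm{diag}(2,1)$. Composing, existence of $\pi_{00}$ is equivalent to existence of a $^*$-homomorphism $C^*(A,B,I)\to U_1U_1^*B(H)U_1U_1^*$ sending the unit to $U_1U_1^*$, $A$ to $U_1U_1^*R$ and $B$ to $U_1RU_1^*$. Now $C^*(A,B,I)\cong\mathbb{C}^2$ is the universal unital $C^*$-algebra generated by a selfadjoint element $A$ satisfying $(A-I)(A-2I)=0$ and a selfadjoint element $B$ with $A+B=3I$ (from which $(B-I)(B-2I)=0$ and commutativity are automatic), so such a homomorphism exists if and only if the proposed images satisfy the analogous relations.

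Unwinding these relations gives exactly (i), (ii) and (iii). Selfadjointness of $U_1U_1^*R$ and $U_1RU_1^*$ is automatic: $R$ is selfadjoint and commutes with $U_1U_1^*=U_{11}U_{11}^*+U_{12}U_{12}^*$, since by hypothesis $R$ commutes with every $U_\alpha$ and $U_\alpha^*$ with $|\alpha|=2$. The spectral relations $(A-I)(A-2I)=0$ and $(B-I)(B-2I)=0$ become (i) and (ii), and $A+B=3I$ becomes (iii); commutativity of the two images then follows at once from (iii).

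The most delicate point is the correct subspace interpretation of (i) and (ii): as operators on $H$, $U_1U_1^*R$ and $U_1RU_1^*$ both annihilate $(I-U_1U_1^*)H$, so their full spectra in $B(H)$ typically contain $0$. The relation $(A-I)(A-2I)=0$ must therefore be read as the identity $U_1U_1^*(R-I)(R-2I)=0$, which is precisely what is meant by "$sp(U_1U_1^*R)\subseteq\{1,2\}$ as an operator on $U_1U_1^*H$". With this convention in place, verifying that (i)--(iii) are equivalent to well-definedness of $\pi_{00}\circ\phi$ is routine, and Corollary~\ref{repex2} follows from Theorem~\ref{representations}.
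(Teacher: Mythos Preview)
Your proof is correct and follows essentially the same route as the paper: reduce via Theorem~\ref{representations} to the existence of $\pi_{00}$, then use the isomorphism $\phi:C^*(A,B,I)\to\mathcal{C}_{00}$ from Example~\ref{ex2} to translate this into concrete conditions on $U_1U_1^*R$ and $U_1RU_1^*$. The only cosmetic difference is that the paper works with the minimal projections $e_1,e_2$ of $\mathbb{C}^2$ and their images $T_1,T_2$, deriving (i)--(iii) from $T_1+T_2=U_1U_1^*$ and the expressions $U_1U_1^*R=T_1+2T_2$, $U_1RU_1^*=2T_1+T_2$, whereas you present $\mathbb{C}^2$ directly by the relations $(A-I)(A-2I)=0$ and $A+B=3I$ and match those against (i)--(iii); your version has the small advantage of making the converse direction explicit.
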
 
\begin{proof}
To analyze the 	representations of $\mathcal{C}_{00}$ (which is isomorphic to the algebra $\mathbb{C}^2$ in this example) we write $\pi_{00}$ for such a representation and get that $T_1:=\pi_{00}(\phi(e_1))=-\frac{1}{3}U_1U_1^*R +\frac{2}{3}U_1RU_1^*$ and $T_2:=\pi_{00}(\phi(e_2))=\frac{2}{3}U_1U_1^*R -\frac{1}{3}U_1RU_1^*$ where $e_1,e_2$ are the generators of $\mathbb{C}^2$. Then $T_1, T_2$ are projections with $T_1T_2=0$ and $T_1+T_2=U_1U_1^*$. We have $U_1U_1^*R=2T_1+T_2$ and $U_1RU_1^*=2T_1+T_2$ so that (i) and (ii) are clear. Since also $U_1U_1^*R+U_1RU_1^*=3T_1+3T_2= 3U_1U_1^*$, we are done.

	\end{proof}               
                
\addcontentsline{toc}{section}{\refname}

\end{document}